\declaretheorem{theorem}
\declaretheorem[sibling=theorem]{corollary, lemma, proposition, question, definition, conjecture}
\declaretheorem[numbered=no,name=Question]{quest}
\declaretheorem[numbered=no]{fact}
\declaretheorem[numbered=no,name=Expander Mixing Lemma]{mixingLemma}
\def\F{\mathbb{F}}
\title{Almost spanning distance trees in subsets of finite vector spaces}
\author{Debsoumya Chakraborti\thanks{
Mathematics Institute, University of Warwick, Coventry, UK.
Supported by the Institute for Basic Science (IBS-R029-C1), and the European Research Council (ERC) under the European Union Horizon 2020 research and innovation programme (grant agreement No. 947978).
E-mail: {\tt debsoumya.chakraborti@warwick.ac.uk}.}
\and 
Ben Lund\thanks{
Discrete Mathematics Group (DIMAG), Institute for Basic Science (IBS), Daejeon, South Korea.
Supported by the Institute for Basic Science (IBS-R029-C1).
E-mail: {\tt benlund@ibs.re.kr}.}
}
\newcounter{propcounter}
\begin{document}

\maketitle

\begin{abstract}
For $d\ge 2$ and an odd prime power $q$, consider the vector space $\mathbb{F}_q^d$ over the finite field $\mathbb{F}_q$, where the distance between two points $(x_1,\ldots,x_d)$ and $(y_1,\ldots,y_d)$ is defined as $\sum_{i=1}^d (x_i-y_i)^2$.
A distance graph is a graph associated with a non-zero distance to each of its edges.
We show that large subsets of vector spaces over finite fields contain every nearly spanning distance tree with bounded degree in each distance.
This quantitatively improves results by Bennett, Chapman, Covert, Hart, Iosevich, and Pakianathan on finding distance paths, and results by Pham, Senger, Tait, and Thu on finding distance trees.
A key ingredient in proving our main result is to obtain a colorful generalization of a classical result of Haxell about finding nearly spanning bounded-degree trees in an expander. 
\end{abstract}

\medskip

\begin{footnotesize}
\noindent {\em 2020 Mathematics Subject Classification.} 52C10, 05C48, 05C05, 05C60. 
\end{footnotesize}

\medskip

\section{Introduction}

Throughout this paper, $q$ denotes an odd prime power. %\textcolor{red}{do we need odd?} 
%\textcolor{blue}{yes, the theory of quadratic forms is very different for even characteristic, and the results we depend on are generally false in even characteristic. I'm not sure whether the distance question is interesting or trivial for even characteristic.} 
For $d \geq 2$, $\F_q^d$ is the $d$-dimensional vector space over the field $\F_q$ with $q$ elements.
For $x = (x_1,\ldots,x_d) \in \F_q^d$, we define
${\|x\| = x_1^2 + \ldots + x_d^2,}$
and for $x,y\in \F_q^d$, refer to $\|x-y\|$ as the {\em distance} between $x$ and $y$.

An influential result of Iosevich and Rudnev \cite{iosevich2007erdos} is that, if $S \subseteq \F_q^d$ with $|S| > 2q^{(d+1)/2}$, then $\{\|x - y\|: x,y \in S\} = \F_q$.
The question of determining the largest set in $\F_q^d$ that avoids some distance is analogous to the Falconer distance problem in fractal geometry \cite{du2023weighted, du2019sharp, falconer1985hausdorff, guth2020falconer}, and related to the Erd\H{o}s unit and distinct distance problems in discrete geometry \cite{erdos1946sets, garibaldi2011erdos,guth2015erdHos,zahl2019breaking}.

Following the work of Iosevich and Rudnev, the finite field Falconer distance problem and its variants have been extensively studied.
Hart, Iosevich, Koh, and Rudnev showed \cite{hart2011averages} that the exponent $(d+1)/2$ in the bound $|S| > 2q^{(d+1)/2}$ is best possible for odd dimensions $d$.
It is conjectured that this exponent can be improved to $d/2$ for even $d$.
There has been progress on a variant of this problem in which the distance set is required to be a positive proportion of $\F_q$, rather than all of $\F_q$.
For example, Chapman, Erdogan, Hart, Iosevich, and Koh \cite{chapman2012pinned} showed that, if $S \subseteq \F_q^2$ with $|S| = \Omega(q^{4/3})$, then $\#\{\|x-y\|: x,y \in S\} = \Omega(q)$.
This result was strengthened by Hanson, Lund, and Roche-Newton \cite{hanson2016distinct}, who showed that the same hypothesis implies that there exists a point $y \in S$ such that $\#\{\|x-y\|: x \in S\} = \Omega(q)$.
These results were further strengthened for prime fields in \cite{lund2020bisectors, murphy2022pinned}.
The current record is by Murphy, Petridis, Pham, Rudnev, and Stevens \cite{murphy2022pinned}, who showed that if $S \subseteq \F_p^2$ for prime $p$ and $|S| = \Omega(p^{5/4})$, then there exists a point $y \in S$ such that $\#\{\|x-y\|: x \in S\} = \Omega(p)$.
In the other direction, Murphy and Petridis \cite{murphy2019example} found an infinite family of examples of subsets of $\mathbb{F}_q^2$ of size $q^{4/3}$ whose distance sets are not the whole of $\F_q$.

In this paper, we are interested in embedding more complex graphs than single edges.
To make this precise, a {\em distance graph} $\mathcal{H}$ is a graph $H$ together with a function $f:E(H) \rightarrow \F_q^*$ that associates each edge of $H$ with a non-zero distance.
A set $S \subseteq \F_q^d$ {\em contains} the distance graph $\mathcal{H}$ if there is an injective map $\phi:V(H) \rightarrow S$ such that $\|\phi(u)-\phi(v)\| = f(uv)$ for each edge $uv \in E(H)$. Such a map $\phi$ is often referred to as an \textit{embedding}, and we say there is an embedding of $\mathcal{H}$ in $S$.
A general question is: 
\begin{quest}
What is the minimum $n$ such that every set $S\subseteq \F_q^d$ of size $n$ must contain every member of some specified family of distance graphs?
\end{quest}
This question has been studied for complete graphs \cite{parshall2017simplices}, bounded-degree graphs \cite{iosevich2019embedding}, cycles \cite{iosevich2021cycles, pham2022geometric}, paths \cite{bennett2016long}, rectangles \cite{lyall2022weak}, and trees \cite{pham2022geometric,soukup2019embeddings}.
All of these previous works consider point configurations that are small relative to the size of the set that we are embedding the configurations into.
For example, Bennett, Chapman, Covert, Hart, Iosevich, and Pakianathan~\cite{bennett2016long} showed that $S\subseteq \F_q^d$ contains every distance path of length $O(|S|q^{-(d+1)/2})$.

By contrast, a special case of our main result is that every $S\subseteq \F_q^d$ contains every bounded-degree distance tree on $|S|-\Omega(q^{(d+2)/2})$ vertices. 
Informally, we show that sufficiently large point sets contain every nearly spanning distance tree with bounded degree.
Our main results are stated formally in the following subsection.

\subsection{Embedding distance trees}

For the convenience of stating our results, we introduce some notations.
For a set $R \subseteq \F_q^*$, an {\em $R$-distance graph} $\mathcal{H}$ is a graph $H$ with an associated function $f: E(H) \rightarrow R$ that assigns a distance to each edge of $H$. 
When $|R| = 1$, we refer to $\mathcal{H}$ as a {\em single-distance graph}, and when $R=\F_q^*$ we refer to $\mathcal{H}$ as a {\em distance graph}.
Consider a vertex $v\in V(H)$.
The neighborhood of $v$ is denoted by $\Gamma_H(v)$, and the degree of $v$ is $D_H(v) = |\Gamma_H(v)|$.
For $r \in R$, the {\em $r$-neighborhood} of $v$ is $\Gamma_{\mathcal{H},r}(v) = \{u \in \Gamma_H(v): f(uv) = r\}$, and the {\em $r$-degree} of $v$ is $D_{\mathcal{H},r}(v) = |\Gamma_{\mathcal{H},r}(v)|$.
The maximum degree of $H$ is $\Delta(H) = \max_{v \in V(H)}D_H(v)$, and the minimum degree of $H$ is $\delta(H) = \min_{v \in V(H)}D_H(v)$.
The maximum $r$-degree of $\mathcal{H}$ is $\Delta_r(\mathcal{H}) = \max_{v \in V(H)}D_{\mathcal{H},r}(v)$.
We often omit subscripts when they are clear from the context.
The following is our main result.
\begin{restatable}{theorem}{distanceTrees}\label{thm:distance trees}
    Let $R \subseteq \F_q^*$, let $\Delta \geq 2$ be an integer, and denote $t = |R|$.
    Every set $S \subseteq \F_q^d$ of points contains every $R$-distance tree $\mathcal{T}$ with at most $|S|-30 (t\Delta)^{1/2}q^{(d+1)/2}$ vertices and maximum $r$-degree $\Delta_r(\mathcal{T}) \leq \Delta$ for each $r \in R$. 
\end{restatable}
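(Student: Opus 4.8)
The plan is to combine the pseudorandomness of the distance graphs over $\F_q^d$ with a colored version of Haxell's theorem on embedding nearly spanning bounded-degree trees into expanders.

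\emph{Step 1 (spectral expansion of distance graphs).} For each $r\in R$ let $G_r$ be the graph on $\F_q^d$ in which $x\sim y$ whenever $\|x-y\|=r$; equivalently $G_r$ is the Cayley graph of $(\F_q^d,+)$ with connection set the sphere $S_r=\{z:\|z\|=r\}$, so that $|S_r|=q^{d-1}\bigl(1+O(q^{-1/2})\bigr)$. The eigenvalues of $G_r$ are the Fourier coefficients $\sum_{z\in S_r}\chi(\langle a,z\rangle)$ of $S_r$; expanding $\mathbf 1_{S_r}$ in additive characters, completing the square, and estimating the resulting Kloosterman/Sali\'e sums by the Weil bound---the computation behind the Iosevich--Rudnev distance theorem---shows that every eigenvalue of $G_r$ other than $|S_r|$ has absolute value at most $2q^{(d-1)/2}$. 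Hence each $G_r$ is a spectral expander and the Expander Mixing Lemma applies. Double-counting $G_r$-edges between a set $A\subseteq S$ and all of $S$, once for the pair $(A,S)$ and once for the pair $\bigl(A,N_{G_r}(A)\cap S\bigr)$, then yields the quantitative statement we need:
\[
\bigl|N_{G_r}(A)\cap S\bigr|\ \ge\ |S|-\frac{C_0\,q^{(d+1)/2}\sqrt{|S|}}{\sqrt{|A|}}
\]
for an absolute constant $C_0$; in particular all but $O\bigl(q^{d+1}/|S|\bigr)$ vertices of $S$ have at least $|S|/(4q)$ neighbours of $S$ in $G_r$.

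\emph{Step 2 (a colorful Haxell theorem).} The new tool is a colored strengthening of Haxell's near-spanning tree embedding theorem: if $V$ is a vertex set carrying a graph $G_r$ for each color $r\in R$, and each $G_r$ is a sufficiently good expander---small subsets expanding by the factor $\Delta+1$ and medium subsets expanding at all, with the relevant thresholds of order $(t\Delta)^{1/2}q^{(d+1)/2}$ once specialized to our graphs---then $V$ contains every $R$-distance tree $\mathcal T$ with $\Delta_r(\mathcal T)\le\Delta$ for each $r$ on at most $|V|-30(t\Delta)^{1/2}q^{(d+1)/2}$ vertices. I would prove this by following Haxell's scheme: root $\mathcal T$, process its vertices in breadth-first order, and maintain throughout a partial embedding $\phi$ together with a system of distinct tentative images of the current frontier, where a frontier vertex $v$ whose parent is mapped to $\phi(p)$ may only be assigned an unused vertex of $N_{G_{f(vp)}}(\phi(p))$. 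To make $\phi$ permanent on the next frontier vertex and then supply fresh tentative images to its children, one runs Haxell's alternating/augmenting-path reassignment, the only change being that when a tentative image is moved from one frontier vertex to another it is moved along an edge of the graph $G_r$ whose color $r$ labels the relevant tree-edge. The process gets stuck only at a ``closed'' set $A$ of frontier requests for which the joint neighbourhood $\bigcup_{u\in A}N_{G_{f(up_u)}}(\phi(p_u))$---$p_u$ denoting the parent of $u$---is too small to host a system of distinct representatives; grouping the requests in $A$ by the color of their tree-edge and using that each parent has at most $\Delta$ children in any one color, the most popular color contributes a set of parent-images of size at least $|A|/(t\Delta)$, and applying the expansion hypothesis to that set in the corresponding $G_r$ gives a contradiction. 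This is where the factors $t$ and $\Delta$ enter the loss.

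\emph{Step 3 (assembly, and the main obstacle).} Applying Step 2 with $V=S$ and the graphs $G_r[S]$, and using the inequality of Step 1 to verify the expansion hypothesis, gives the theorem; the exponent $(d+1)/2$ comes from combining the eigenvalue bound $2q^{(d-1)/2}$ with the density $|S_r|/q^d\approx 1/q$, and bookkeeping the constant $C_0$ and those in Haxell's argument yields $30$. The hard part is Step 2 together with its calibration against Step 1: the Expander Mixing Lemma gives strong expansion only for sets $A$ with $|A|\gtrsim q^{d+1}/|S|$, so the colored argument must be arranged never to appeal to expansion below that scale. The few vertices of small $G_r$-degree must therefore be dealt with separately---either deleted at the start, since they number only $O\bigl(tq^{d+1}/|S|\bigr)$, which the nontriviality of the hypothesis forces to be far below $30(t\Delta)^{1/2}q^{(d+1)/2}$, or, in the range of $|S|$ where deleting that many vertices would ruin the minimum-degree hypothesis, accommodated inside the alternating-path argument by never routing an augmenting path through them---and verifying that one of these always works with exactly the claimed loss is the delicate point.
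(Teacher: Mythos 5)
Your overall architecture matches the paper's: treat each distance graph $G_r$ as an $(n,D,\lambda)$-graph with $\lambda=2q^{(d-1)/2}$, clean out low-degree vertices of $\mathcal{G}[S]$ via the mixing lemma, and then invoke a colored Haxell-type embedding theorem, with a pigeonhole over colors accounting for the factor $t$. The genuine gap is in Step 2, which is the actual novelty and which you do not prove. The mechanism you describe (BFS frontier, a system of distinct tentative images, alternating/augmenting-path reassignment, and, when stuck, applying expansion to the set of parent-images) is not the argument behind Haxell's tree-embedding theorem (it is closer to her independent-transversal machinery), and as stated it fails precisely in the nearly spanning regime this theorem concerns. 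When the partial tree already occupies all but roughly $30(t\Delta)^{1/2}q^{(d+1)/2}$ vertices, the joint neighbourhood of the stuck frontier requests can be almost entirely covered by $\phi(\mathcal{T})$; expansion of the parent set lower-bounds $|\Gamma(\cdot)|$ but says nothing about $|\Gamma(\cdot)\setminus\phi(\mathcal{T})|$, which is the quantity an SDR for the frontier actually needs. To rule this out one must maintain, at every step and for \emph{every} small set $X\subseteq V(\mathcal{G})\times[t]$ (not merely frontier or parent sets), a Friedman--Pippenger-type invariant of the form $|\Gamma(X)\setminus\phi(\mathcal{T})|\ \ge\ \sum_{(v,r)\in X}\bigl(\Delta-D_{\mathcal{T},r}(\phi^{-1}(v))\bigr)$, and one must use a second, medium-set hypothesis $|\Gamma(X)|\ge\Delta|X|+k$ with $k$ the number of tree vertices; your phrase ``medium subsets expanding at all'' is far too weak, and your sketch never states the invariant, never explains why the reassignment preserves it, and never invokes the $+k$ condition. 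This is exactly the content of \cref{thm:haxell colored}, which the paper proves by extending ``$2m$-good'' embeddings leaf by leaf, using submodularity of the deficiency function $R(X,\phi)$ and the union of critical sets; without some version of this, Step 2 is an assertion, not a reduction.

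Two smaller points. In Step 1/3, the cleaning of low-degree vertices must be done simultaneously in all $t$ colors and iteratively, since deleting low-degree vertices can create new ones inside the surviving set; your per-color count of vertices with few $G_r$-neighbours \emph{in $S$}, plus ``delete them at the start,'' does not by itself yield $\delta(G_r[W])\ge\Omega(C\lambda)$ for all $r$, and your fallback (``never route an augmenting path through them'') is again unsubstantiated. This is what \cref{thm:subgraphsWithLargeMinDegreeExist} supplies, at a cost of $O\bigl(t(n\lambda/D)^2|S|^{-1}\bigr)$ deletions, affordable because one may assume $|S|\ge 30(t\Delta)^{1/2}q^{(d+1)/2}$ (otherwise the statement is vacuous). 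Finally, after the colour pigeonhole the single-colour expansion you need is $t\Delta$-fold for small sets and $t\Delta|Z|+k$ for medium sets, verified with $m= t^{1/2}\Delta^{-1/2}\,n\lambda/D$ and $k=|S|-10(t\Delta)^{1/2}\,n\lambda/D$ as in \cref{thm:coloredTreesInInducedSubgraphs}; the constant $30$ then comes from $n\lambda/D\le 3q^{(d+1)/2}$. So the skeleton is the right one, but the colorful Haxell step must actually be carried out.
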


For example, if $|S| = 120 q^{(d+2)/2}$, then $S$ contains every distance tree with at most $2^{-1}|S|$ vertices and maximum $r$-degree at most $4$ for each $r \in \F_q^*$. 

The assumption that $0 \notin R$ can be removed in odd dimensions without changing the conclusion of the theorem.
See the discussion following \cref{thm:singleDistanceExpansion} for a more detailed discussion of this issue.

The proof of \cref{thm:distance trees} relies on an independently interesting colorful generalization of a theorem of Haxell \cite{haxell2001tree} on embedding trees in expander graphs, which we state in the next subsection.

\cref{thm:distance trees} has the {error term} $\Omega(t^{1/2}\Delta^{1/2}q^{(d+1)/2})$.
Constructions in \cite{hart2011averages, iosevich2023quotient} show that the dependence on $q$ is necessary and, in odd dimensions, best possible.
In the remainder of this subsection, we discuss the dependence of this term on $\Delta$, $t$, and $|S|$. 

The dependence on $\Delta$ is best possible.
For $t=1$ and $|S| = \Omega(q^d)$,  \cref{thm:distance trees} implies that $S$ contains all single-distance trees with maximum degree $O(q^{d-1})$ on at most $|S|/2$ vertices.
This is clearly within a constant factor of being optimal since the total number of points in $\F_q^d$ at distance $r$ from any point is roughly $q^{d-1}$.
However, the following theorem shows that it is possible to find stars of much higher degree than is guaranteed by \cref{thm:distance trees}.

\begin{restatable}{proposition}{distanceStars}\label{thm:distance stars}
    Let $R \subseteq \F_q^*$, and denote $t = |R|$.
    For any $S \subseteq \F_q^d$ with $|S| \geq 12 t^{1/2}q^{(d+1)/2}$, there is a subset $W \subseteq S$ of size $|W| \geq |S| - 80 t q^{(d+1)}|S|^{-1}$ such that, for each $x \in W$ and $r \in R$, we have $\#\{y \in W: \|x - y\| = r\} \geq 6^{-1}q^{-1}|S|$.
\end{restatable}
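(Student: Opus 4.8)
The plan is a vertex-deletion (``peeling'') argument driven by the pseudorandomness of distance graphs. For $A,B\subseteq\F_q^d$ and $r\in\F_q^*$ write $\nu_r(A,B)=\#\{(x,y)\in A\times B:\|x-y\|=r\}$. The one external ingredient I would invoke is the $L^2$/expander-mixing estimate for the distance-$r$ graph,
\[
\nu_r(A,B)\ \ge\ q^{-1}|A|\,|B|\ -\ C\,q^{(d-1)/2}\sqrt{|A|\,|B|}\qquad (r\neq 0),
\]
for an absolute constant $C$ (one may take $C=2$). This is the classical consequence of Parseval together with the Fourier decay bound $|\widehat{S_r}(\xi)|\le 2q^{(d-1)/2}$ for $\xi\neq 0$, where $S_r=\{x\in\F_q^d:\|x\|=r\}$ (Kloosterman/Sali\'e sum bounds, as in Iosevich--Rudnev); the gap between $q^{-d}|S_r|$ and $q^{-1}$ is harmlessly absorbed into the error term via $|A|\,|B|\le q^d\sqrt{|A|\,|B|}$.

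Set the threshold $\tau=6^{-1}q^{-1}|S|$. Starting from $W_0=S$, I would repeatedly delete a vertex $x$ from the current set $W_i$ for which some $r\in R$ has $\#\{y\in W_i:\|x-y\|=r\}<\tau$ --- recording the pair $(x,r)$ --- and stop at the first set $W:=W_m$ in which no such vertex exists. By the stopping rule $W$ has exactly the asserted property, so everything reduces to bounding the number $m$ of deletions.

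Let $x_1,\dots,x_m$ be the deleted vertices with recorded distances $r_1,\dots,r_m$. The key point is monotonicity: for any $k\le m$, setting $W^{(k)}=S\setminus\{x_1,\dots,x_k\}$ and $U^{(k)}_r=\{x_j:j\le k,\ r_j=r\}$, the set $W^{(k)}$ is contained in the set present at the moment $x_j$ was deleted, for every $j\le k$; hence each $x_j\in U^{(k)}_r$ has fewer than $\tau$ neighbors at distance $r$ inside $W^{(k)}$, and summing over $j$ gives $\nu_r(U^{(k)}_r,W^{(k)})<\tau\,|U^{(k)}_r|$. On the other hand the mixing estimate bounds $\nu_r(U^{(k)}_r,W^{(k)})$ from below. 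If $k\le|S|/3$, so that $|W^{(k)}|\ge 2|S|/3$, then combining the two bounds and simplifying (divide through by $\sqrt{|U^{(k)}_r|}$ and insert $\tau=6^{-1}q^{-1}|S|$) yields $|U^{(k)}_r|<16\,q^{d+1}|S|^{-1}$, and therefore $k=\sum_{r\in R}|U^{(k)}_r|<16\,t\,q^{d+1}|S|^{-1}$.

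It remains to remove the apparent circularity, since this bound on $k$ was obtained under the hypothesis $k\le|S|/3$. Using $|S|\ge 12\,t^{1/2}q^{(d+1)/2}$ one checks $16\,t\,q^{d+1}|S|^{-1}\le |S|/9<|S|/3$; so if $m$ were at least $16\,t\,q^{d+1}|S|^{-1}$, then taking $k=\lceil 16\,t\,q^{d+1}|S|^{-1}\rceil$ produces a value with $k\le m$ and $k\le|S|/3$ that nonetheless contradicts the displayed bound. Hence $m<16\,t\,q^{d+1}|S|^{-1}\le 80\,t\,q^{d+1}|S|^{-1}$, so $|W|=|S|-m\ge|S|-80\,t\,q^{d+1}|S|^{-1}$, as required. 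I expect this bootstrap step --- rather than any single computation --- to be the only real subtlety: the accuracy of the mixing estimate is governed by $|W|$, which is precisely the quantity being controlled, and restricting attention to an initial segment of the deletions is what breaks the loop; the rest is routine bookkeeping, with care needed only to keep the constants inside the stated $12$ and $80$ (the generous $80$ leaves room for a weaker mixing constant than $C=2$).
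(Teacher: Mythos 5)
Your proposal is correct, and it is essentially the paper's argument: the paper deduces the proposition from \cref{thm:subgraphsWithLargeMinDegreeExist} (applied to the family $\{G_r^d : r\in R\}$ via \cref{thm:singleDistanceExpansion}), and the proof of that lemma is exactly your peel-off-low-degree-vertices-and-apply-expander-mixing scheme, with the Iosevich--Rudnev spectral bound as the external input. The only differences are organizational (you inline the lemma in the distance setting, sum the mixing bound over the color classes of deleted vertices, and bootstrap on an initial segment of deletions, whereas the paper truncates the deletion process at a prescribed step and pigeonholes on a majority color), and your constants land within the stated $12$ and $80$ even after allowing for the slightly weaker mixing constant.
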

In particular, each vertex of $W$ is the central vertex of a star with minimum degree $6^{-1}q^{-1}|S|$ in each distance.
For example, if $|S| = 30 q^{(d+2)/2}$, then $S$ contains all distance stars $\mathcal{T}$ such that for each $r \in \F_q^*$, we have $\Delta_r(\mathcal{T}) \leq 5q^{d/2}$, which is far more than what we get from \cref{thm:distance trees}.

The error term $\Omega(t^{1/2}\Delta^{1/2}q^{(d+1)/2})$ does not depend on $|S|$.
The authors believe that this error term should be smaller for larger $|S|$.
As evidence for this belief, we have the following theorem for single-distance trees, which is an easy application of a recent result of Han and Yang~\cite{han2022spanning} on embedding spanning trees into expander graphs.

\begin{restatable}{theorem}{singleDistanceTrees}\label{thm:single distance trees}
    For any $\Delta \geq 2$, there is a constant $C_\Delta$ such that the following holds.
    Every $S \subseteq \F_q^d$ with ${|S| \geq C_\Delta^{\sqrt{d\log q}} q^{(d+1)/2}}$ contains every single-distance tree $\mathcal{T}$ with at most ${|S| - 32 q^{d+1}|S|^{-1}}$ vertices and maximum degree $\Delta(\mathcal{T}) \leq \Delta$.
\end{restatable}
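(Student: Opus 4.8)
The plan is to reduce Theorem~\ref{thm:single distance trees} to embedding a spanning tree in a single spectral expander and then invoke the Han--Yang embedding theorem \cite{han2022spanning}. Fix the distance $r\in\F_q^*$ that $\mathcal{T}$ assigns to all its edges, and let $G$ be the Cayley graph on the additive group of $\F_q^d$ with connection set $S_r=\{x\in\F_q^d:\|x\|=r\}$. For a single-distance tree with $R=\{r\}$, a set $S$ contains $\mathcal{T}$ exactly when the induced subgraph $G[S]$ contains a copy of the underlying tree $T$, so it suffices to find such a copy. Standard Gauss-sum estimates (see e.g.\ \cite{iosevich2007erdos}) give $|S_r|=q^{d-1}+O(q^{(d-1)/2})$ and bound every nontrivial eigenvalue of $G$ in absolute value by $O(q^{(d-1)/2})$; thus $G$ is $\bigl(1+o(1)\bigr)q^{d-1}$-regular on $q^d$ vertices with second eigenvalue $\lambda=O(q^{(d-1)/2})$.

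Next I would clean up $G[S]$. By the Expander Mixing Lemma, for all $A,B\subseteq S$ the number of edges between them is $\tfrac{|S_r|}{q^d}|A||B|$ up to an error of $\lambda\sqrt{|A||B|}$; since $|S_r|/q^d=(1+o(1))q^{-1}$, all but few vertices of $S$ have about $|S|/q$ neighbours inside $S$. Let $S'\subseteq S$ be the vertices with at least $\tfrac{1}{2q}|S|$ neighbours in $S$; applying the mixing lemma to the low-degree set $S\setminus S'$ bounds its size by $32\,q^{d+1}|S|^{-1}$ once the degree threshold is chosen appropriately. Put $G'=G[S']$. Edge counts between subsets of $S'$ are inherited from $G$, so $G'$ has minimum degree $\Omega(|S|/q)$ and still satisfies the mixing estimate with the same $\lambda$; hence $G'$ is an expander whose ratio of $\lambda$ to (average) degree is $O\bigl(q^{(d+1)/2}|S|^{-1}\bigr)$.

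Now I would apply the Han--Yang theorem, which guarantees that an expander on $N$ vertices with degree parameter $D_0$ and eigenvalue parameter $\lambda_0$ contains every spanning tree of maximum degree at most $\Delta$ provided $D_0/\lambda_0\ge c_\Delta^{\sqrt{\log N}}$, for a constant $c_\Delta$ depending only on $\Delta$ \cite{han2022spanning}. Since $|V(G')|\le q^d$, it suffices that $|S|q^{-(d+1)/2}$, which controls $D_0/\lambda_0$, exceed $c_\Delta^{\sqrt{d\log q}}$; this is exactly the hypothesis $|S|\ge C_\Delta^{\sqrt{d\log q}}q^{(d+1)/2}$ for an appropriate $C_\Delta$. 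Given $\mathcal{T}$ with $|V(T)|\le|S|-32\,q^{d+1}|S|^{-1}\le|V(G')|$ and $\Delta(T)\le\Delta$, I would extend $T$ to a spanning tree $T'$ of $G'$ by attaching a path to one of its leaves, which keeps the maximum degree at most $\max(\Delta(T),2)\le\Delta$ because $\Delta\ge 2$; embedding $T'$ in $G'$ via Han--Yang and restricting to $T$ completes the argument.

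The main obstacle is a mismatch in the second step: Han--Yang is stated for (near-)regular expanders, whereas $G[S]$ and $G'$ are genuinely irregular, as some vertices of $S$ may have no neighbour in $S$ at distance $r$. One must therefore either use a robust ``minimum degree plus mixing'' formulation of their result, or regularise $G'$ without harming its expansion or spending more than the allotted $32\,q^{d+1}|S|^{-1}$ vertices---for instance by discarding a further $o(|S|)$ vertices to equalise degrees, or by superimposing a sparse pseudorandom graph. The rest is bookkeeping: fixing the degree threshold and the constant $32$, and translating Han--Yang's eigenvalue condition precisely into the stated lower bound on $|S|$.
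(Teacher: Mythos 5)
Your route is the same one the paper takes: identify the single-distance graph $G_r^d$ as an $(n,D,\lambda)$-graph with $n=q^d$, $D=(1+o(1))q^{d-1}$, $\lambda=2q^{(d-1)/2}$ (\cref{thm:singleDistanceExpansion}), delete low-degree vertices of $G[S]$, and feed the remaining induced subgraph to Han--Yang. Two of your worries about the last step are non-issues: the form of Han--Yang quoted in the paper (\cref{thm:han}) is already the robust ``bijumbledness for all $X,Y$ plus minimum degree'' statement, so no regularisation of the irregular graph $G[S]$ is needed --- the mixing condition with $p=D/n$, $\beta=\lambda$ is inherited verbatim by every induced subgraph, and only the minimum degree has to be engineered; moreover that theorem embeds trees on \emph{at most} $n$ vertices, so the spanning-extension trick (attaching a path to a leaf) is unnecessary, though harmless.

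The genuine gap is in your cleanup step. You take $S'$ to be the vertices with at least $\tfrac{1}{2q}|S|$ neighbours \emph{in $S$} and claim $\delta(G[S'])=\Omega(|S|/q)$. In the regime the theorem is actually about, $|S|$ of order $C_\Delta^{\sqrt{d\log q}}q^{(d+1)/2}$, the deleted set is only bounded by $O\bigl(q^{d+1}|S|^{-1}\bigr)\approx q^{(d+1)/2}C_\Delta^{-\sqrt{d\log q}}$, and the ratio of this bound to your degree threshold is about $64\,q\,C_\Delta^{-2\sqrt{d\log q}}$, which tends to infinity for fixed $d$ (since $\sqrt{d\log q}=o(\log q)$). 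So the removed set can be far larger than the threshold, a vertex of $S'$ may lose \emph{all} of its neighbours when you pass to $G[S']$, and the minimum-degree hypothesis of Han--Yang is not secured by a single pass. The standard repair is exactly the paper's \cref{thm:subgraphsWithLargeMinDegreeExist} (used with $t=1$): iteratively delete any vertex having fewer than roughly $4^{-1}C\lambda$ neighbours in the \emph{current surviving set} (where $|S|=C\,n\lambda/D$), and bound the total number of deletions by applying the expander mixing lemma to the pair consisting of the deleted set and the surviving set; this produces $W\subseteq S$ with $|S\setminus W|=O\bigl(q^{d+1}|S|^{-1}\bigr)$ and $\delta(G[W])\geq 4^{-1}C\lambda$. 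With that lemma in place, your remaining bookkeeping --- checking $\delta(G[W])\geq 4\sqrt{p\beta |W|}$ and translating the condition $\beta\leq p|W|/(4\Delta^{5\sqrt{\log |W|}})$ into $|S|\geq C_\Delta^{\sqrt{d\log q}}q^{(d+1)/2}$ via $\log|W|\leq d\log q$ --- matches the paper's \cref{thm:treesInInducedSubgraphsHan} and goes through.
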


\cref{thm:single distance trees} shows that if $S \subseteq \F_q^d$ with $|S| = \Omega(q^{(d+1)/2 + o(1)})$, then $S$ contains all single-distance trees $\mathcal{T}$ with bounded maximum degree and $|S| - |V(\mathcal{T})| = \Omega(q^{d+1}|S|^{-1})$, which is smaller than $q^{(d+1)/2}$ when $|S|$ is much larger than $q^{(d+1)/2}$.
We provide a family of constructions in \cref{sec:constructions} showing that this error term of $\Omega(q^{d+1}|S|^{-1})$ cannot be improved in odd dimensions.

Because of the dependence of the error term on $t$, \cref{thm:distance trees} does not guarantee that all bounded-degree distance trees with at most $q$ vertices can be embedded into a set with fewer than $q^{(d+2)/2}$ points.
The authors do not believe that so many points are needed, and propose the following conjecture.

\begin{conjecture}\label{conj:optimal distance trees}
    For any $\Delta \geq 2$, there is a constant $C_\Delta$ such that the following holds.
    Every $S \subseteq \F_q^d$ with $|S| \geq C_\Delta^{o(d \log q)} q^{(d+1)/2}$ contains every distance tree $\mathcal{T}$ with at most $|S|-100q^{d+1}|S|^{-1}$ vertices and maximum $r$-degree $\Delta_r(\mathcal{T}) \leq \Delta$ for each $r \in \F_q^*$.
\end{conjecture}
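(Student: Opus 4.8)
The plan is to deduce \cref{conj:optimal distance trees} from a \emph{colorful} strengthening of the spanning-tree embedding theorem of Han and Yang~\cite{han2022spanning}, in the same way that \cref{thm:single distance trees} is deduced from their single-distance version; the point is to arrange the colorful statement so that the number $t$ of distances in play influences only the required expansion quality of the host — and hence the hypothesis $|S| \ge C_\Delta^{o(d\log q)}q^{(d+1)/2}$ — and \emph{not} the defect. For $r \in \F_q^*$ and $x \in \F_q^d$ write $\Gamma_r(x) = \{y : \|x-y\| = r\}$, and let $G_r$ be the graph on $S$ in which $x$ is adjacent to every point of $\Gamma_r(x)\cap S$. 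The standard character-sum estimate for sphere incidences in $\F_q^d$ (see \cite{iosevich2007erdos}) together with the expander mixing lemma gives, for all $A,B \subseteq S$, the bound $\bigl|e_{G_r}(A,B) - q^{-1}|A||B|\bigr| \le \Lambda\sqrt{|A||B|}$ with $\Lambda = O(q^{(d-1)/2})$, so that, writing $D = |S|/q$ for the typical degree of $G_r[S]$, the ratio $\Lambda/D = O(q^{(d+1)/2}|S|^{-1})$ and the target defect $100\,q^{d+1}|S|^{-1}$ is of order $(\Lambda/D)^2|S|$ — exactly the defect one expects from a Han--Yang-type embedding. It therefore suffices to find an injection $\phi : V(\mathcal T) \to S$ that, for each color-$r$ edge $uv$ of $\mathcal T$, uses an edge of $G_r$. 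We shall moreover exploit the structural feature that the graphs $\{G_r\}_{r\in\F_q^*}$ together with the zero-distance graph $G_0$ \emph{partition} $\binom{S}{2}$, and that $G_0$ has average degree $o(D)$ whenever $|S| = \omega(q)$, which always holds here.

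The colorful embedding lemma to be established should read roughly as follows: if $V$ is an $n$-vertex set carrying graphs $(G_r)_{r\in R}$ that partition $\binom{V}{2}$ up to an $o(D)$-regular remainder, each satisfying the $(n,D,\Lambda)$ mixing bound with $\Lambda/D$ below an explicit threshold depending on $\Delta$, then $V$ contains every $R$-distance tree on $(1 - C(\Lambda/D)^2)n$ vertices with maximum $r$-degree at most $\Delta$, where $C$ is absolute. Its proof would follow the three-phase Han--Yang template: (a) decompose $\mathcal T$ into a bounded-height skeleton together with a family of small pendant subtrees hanging off it; (b) embed the skeleton greedily while reserving an absorbing structure, using small-set expansion of the (at most $\Delta$) color graphs incident to each skeleton vertex — crucially this processes color classes one at a time and so costs a factor $\Delta$, never $t$, because any vertex has at most $\Delta$ neighbours in a fixed color and the skeleton is short; and (c) attach the pendant subtrees via a colorful Hall-type matching between unembedded pendant vertices and unused points of $V$. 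The delicate point is step (c): checking Hall's condition color by color is precisely what forces the $(t\Delta)^{1/2}$ loss in \cref{thm:distance trees}, so instead one aggregates over all colors simultaneously. Because the $G_r$ partition $\binom{V}{2}$, the bipartite demand graph is the complement inside $K_V$ of a sparse graph, restricted to the relevant rows, and each demand block has size at most $\Delta$; a single mixing estimate on $K_V$ then verifies Hall's inequality and the dependence on $t$ disappears entirely.

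The main obstacle is that this picture presumes each color graph $G_r$ appearing in $\mathcal T$ is a genuine expander on $S$, and this \emph{fails} precisely in the regime the conjecture targets. When $|S| = q^{(d+1)/2+o(1)}$ the typical degree $D = |S|/q$ is only $q^{(d-1)/2+o(1)} \approx \Lambda$, so $G_r[S]$ need not expand at all, and an adversarial $S$ can concentrate its points so that a positive proportion of them meet \emph{no} point of some sphere; a second-moment computation, using the eigenvalue bound for $G_r$ on $\F_q^d$ to control $\sum_{x\in S}\bigl(|S\cap\Gamma_r(x)| - |S|/q\bigr)^2 = O(q^{d-1}|S|)$ and summing over $r$, shows that the number of \emph{bad} pairs $(x,r)$ with $\bigl||S\cap\Gamma_r(x)| - |S|/q\bigr| > |S|/(2q)$ is $O(q^{d+2}|S|^{-1})$ — a factor $q$ larger than the allowed defect $O(q^{d+1}|S|^{-1})$ — so one cannot simply delete every host vertex that is bad for some color used by $\mathcal T$. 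Circumventing this is the crux. The intended resolution is to treat the embedding as a \emph{defective list-assignment} problem: each host vertex $x$ carries the forbidden list $L(x) = \{r : x \text{ is bad for } r\}$ with $\sum_x |L(x)| = O(q^{d+2}|S|^{-1})$, and one must place $\mathcal T$ so that no tree vertex lands on a host vertex whose list contains one of its incident colors with large multiplicity. Since $\mathcal T$ has at most $\Delta$ edges of each color at every vertex, this is a local constraint of bounded width, and the hope is to absorb it either through a Lovász-Local-Lemma–controlled random greedy embedding that never places a tree vertex where a required distance is scarce, or by enlarging the absorbing structure of phase (b) so that it can repair a bounded number of such conflicts per host vertex. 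Making either work \emph{uniformly over all adversarial $S$} — equivalently, removing the last factor of $t$ without paying a factor of $q$ in the defect — is the hard part, and is exactly the sense in which \cref{conj:optimal distance trees} lies beyond the methods developed here.
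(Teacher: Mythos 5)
The statement you are trying to prove is \cref{conj:optimal distance trees}, which is an open conjecture in the paper: the authors do not prove it, and \cref{sec:discussConjecture} is devoted to explaining why their own machinery (\cref{thm:subgraphsWithLargeMinDegreeExist} plus the colorful Haxell theorem, \cref{thm:haxell colored}) cannot reach it --- the factor $t$ enters both through the tightness of \cref{thm:subgraphsWithLargeMinDegreeExist} and through the pigeonholing over colors in the proof of \cref{thm:coloredTreesInInducedSubgraphs}. Your proposal is therefore not being measured against a paper proof, and, more importantly, it is not a proof: the central ingredient, the ``colorful embedding lemma'' modeled on Han--Yang \cite{han2022spanning}, is only stated ``roughly'' and never established, and you yourself concede in the final paragraph that the crux --- handling the bad pairs $(x,r)$, whose count $O(q^{d+2}|S|^{-1})$ exceeds the permitted defect $O(q^{d+1}|S|^{-1})$ by a factor of $q$ --- is left as a ``hope'' via an unspecified local-lemma or absorption argument. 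A program whose decisive step is explicitly unresolved is a gap by definition; at best you have rediscovered, in different language, the obstruction the paper already identifies (in the regime $|S|=q^{(d+1)/2+o(1)}$ the induced sphere graphs $G_r[S]$ need not expand, and one cannot afford to delete all vertices that are bad for some color).

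Beyond the admitted gap, the one concrete mechanism you offer for eliminating the $t$-dependence --- phase (c), where you claim that because the graphs $G_r$ partition $\binom{S}{2}$ the ``bipartite demand graph is the complement inside $K_V$ of a sparse graph'' and Hall's condition follows from ``a single mixing estimate on $K_V$'' --- does not hold up. Each unembedded pendant vertex does not accept a host point lying in \emph{any} color class; it must be placed on the specific sphere $\Gamma_r(\phi(u))\cap S$ determined by the color $r$ of its edge to its already-embedded parent $u$. The demand graph is thus a union of sphere neighborhoods of density roughly $q^{-1}$, not a co-sparse graph, and the partition property of $\{G_r\}$ gives you nothing here: the hard instances of Hall's condition are exactly small sets of pendant vertices sharing a parent and a color, which brings you back to the minimum-degree/bad-vertex problem you flagged, with no mechanism to avoid either a factor of $t$ (as in \cref{thm:distance trees}) or a factor of $q$ in the defect. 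If you want to pursue this conjecture, the paper's concluding remarks point to the two routes the authors consider plausible: a genuinely colorful strengthening of \cref{thm:coloredTreesInInducedSubgraphs} with defect $O((n\lambda/D)^2|S|^{-1})$, or exploiting the dilation structure of the distance graphs (e.g.\ \cref{conj:highDegreeInducedDistanceGraphs}, or improved point--sphere incidence bounds in the spirit of \cref{thm:kohLeePhamIncidence} fed into the depth-first-search argument of \cref{thm:specialPaths}); your list-assignment reformulation could be a reasonable way to phrase the second route, but as written it is a restatement of the difficulty, not a solution.
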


Up to the factors of $C_\Delta^{o(d \log q)}$ and $100$, \cref{conj:optimal distance trees} is as strong as possible. 
It would be interesting to prove the following weaker version.

\begin{conjecture}[Weaker version of \cref{conj:optimal distance trees}] \label{conj:weak distance trees}
    For any $\Delta \geq 2$, there is a constant $C_\Delta$ such that the following holds.
    Every $S \subseteq \F_q^d$ with $|S| \geq C_\Delta^{o(d \log q)} q^{(d+1)/2}$ contains every distance tree $\mathcal{T}$ with at most $|S|-100q^{(d+1)/2}$ vertices and maximum degree $\Delta(\mathcal{T}) \leq \Delta$.
\end{conjecture}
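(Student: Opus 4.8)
We close by outlining a plausible route toward \cref{conj:weak distance trees} and isolating what we see as the main obstruction. The plan is to retain the spectral input that drives \cref{thm:distance trees} and \cref{thm:single distance trees}, but to replace the colorful Haxell-type lemma by a sharper colorful embedding statement whose dependence on the number of distances $t = q-1$ is absorbed into the (very weak) lower bound $|S| \ge C_\Delta^{o(d\log q)} q^{(d+1)/2}$ rather than into the error term.

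For each $r \in \F_q^*$ let $G_r$ denote the graph on vertex set $S$ in which $x \sim_r y$ exactly when $\|x - y\| = r$. Viewing $G_r$ as an induced subgraph of the distance-$r$ Cayley graph of $\F_q^d$, which has $q^d$ vertices, is $\Theta(q^{d-1})$-regular, and has all nontrivial eigenvalues $O(q^{(d-1)/2})$ in absolute value, the Expander Mixing Lemma gives $\bigl|\, e_{G_r}(X,Y) - q^{-1}|X|\,|Y| \,\bigr| \le q^{(d-1)/2}\sqrt{|X|\,|Y|}$ for all $X, Y \subseteq S$. Writing $|S| = M q^{(d+1)/2}$, two consequences follow: any $X, Y \subseteq S$ with $\sqrt{|X|\,|Y|} \ge 3 q^{(d+1)/2}$ are joined by an edge of $G_r$; and for each fixed $r$, at most $4 q^{(d+1)/2}/M$ points of $S$ have fewer than $|S|/(2q)$ neighbors in $G_r$. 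Summing the latter over all $r$, the set $B \subseteq S$ of points deficient in more than a $(2\Delta)^{-1}$-fraction of the distances satisfies $|B| \le 8\Delta q^{(d+1)/2}/M$; choosing $M = C_\Delta^{o(d\log q)}$ large makes $|B| = o(q^{(d+1)/2})$, so one may delete $B$ at the outset and embed $\mathcal{T}$ inside the set $S' = S \setminus B$, paying only an extra $|B|$ vertices in the final count.

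Inside $S'$ every point has close to the typical degree $|S|/q$ in $G_r$ for all but at most a $(2\Delta)^{-1}$-fraction of the distances $r$. One would then want a colorful analogue of the theorems of Haxell and of Han and Yang: an $\F_q^*$-edge-colored graph $H$ on a vertex set $V$, each of whose color classes is a strong expander and in which every vertex is deficient for only a small fraction of colors, contains every distance tree $\mathcal{T}$ with $\Delta(\mathcal{T}) \le \Delta$ and $|V(\mathcal{T})| \le |V| - O_\Delta(q^{(d+1)/2})$. The point of running the embedding \emph{colorfully} is that at each step only the at most $\Delta$ colors incident to the tree-vertex being placed are in play, so the points that must be avoided at that step number $O(q^{(d+1)/2})$ rather than $O(t\, q^{(d+1)/2})$; over the whole process one needs a global accounting showing that the \emph{total} loss remains $O_\Delta(q^{(d+1)/2})$, which is exactly the kind of amortization that the matching- and absorption-based refinements of Han and Yang — rather than the one-shot expansion criterion of Haxell — are built to supply.

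The main obstacle is to carry out this colorful embedding without re-introducing a $t$-dependent loss. Haxell's argument and its descendants re-route a partial embedding along an alternating tree, and at such a step the admissible images of a block of pending edges of several colors form a \emph{union} $\bigcup_r N_{G_r}(X_r)$, which must be bounded below without knowing that each individual $N_{G_r}(X_r)$ is large — and largeness of the individual terms is precisely what fails at the $O(q^{(d+1)/2}/M)$ deficient points of each color. One therefore needs a colorful isoperimetric inequality for the family $\{G_r\}_{r \in \F_q^*}$ that does not degrade with the number of colors, or a weaker substitute compatible with the amortization above. Establishing such an inequality appears to require an idea genuinely beyond the proof of \cref{thm:distance trees}, and we leave it open.
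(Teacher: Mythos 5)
The statement you are addressing is \cref{conj:weak distance trees}, which is an open conjecture in the paper; there is no proof of it in the paper to compare against, and your proposal does not supply one either. Your text is a strategy sketch whose decisive step is explicitly left unproven: the ``colorful analogue of the theorems of Haxell and of Han and Yang'' with total loss $O_\Delta(q^{(d+1)/2})$ independent of the number of colors $t=q-1$ is exactly the missing ingredient, and you acknowledge this (``we leave it open''). In other words, the only genuinely new content of the proposal beyond the paper is the preliminary cleaning step, and that step alone does not yield the conjecture. Concretely, after deleting your set $B$, every remaining point may still have essentially no neighbors in up to a $(2\Delta)^{-1}$-fraction of the distances; since the distance tree $\mathcal{T}$ is arbitrary, the $\le \Delta$ colors demanded at the vertex currently being embedded can all lie among the deficient colors of the available image points, so even a single extension step can fail without the re-routing/amortization machinery you invoke but do not construct. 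The assertion that ``only the at most $\Delta$ colors incident to the tree-vertex being placed are in play, so the points that must be avoided number $O(q^{(d+1)/2})$'' is precisely the kind of per-step bookkeeping that, in all known arguments (Haxell-type alternating trees, or the absorption scheme of Han--Yang), aggregates into a union bound over colors and re-introduces the factor $t$; no mechanism is offered to prevent this.

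It is also worth noting that your outline largely parallels the paper's own concluding discussion rather than advancing past it: the paper identifies the same two sources of $t$-dependence (the minimum-degree cleaning lemma \cref{thm:subgraphsWithLargeMinDegreeExist}, which is shown to be tight for general families of expanders, and the pigeonholing over colors used to verify the hypotheses of \cref{thm:haxell colored}), and it formulates the needed strengthenings as conjectures (the stronger version of \cref{thm:coloredTreesInInducedSubgraphs}, \cref{conj:highDegreeInducedDistanceGraphs}, and the point--sphere incidence route via \cref{thm:kohLeePhamIncidence}). Your mixing-lemma computation giving $|B| \le 8\Delta q^{(d+1)/2}/M$ is correct, but trading the paper's ``large degree in every color'' guarantee for ``large degree in all but a $(2\Delta)^{-1}$-fraction of colors'' only helps if one also proves an embedding theorem robust to per-vertex color deficiencies, which is again the open point. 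So the verdict is a genuine gap: the conjecture remains unproved by your proposal, with the missing piece being the color-number-free colorful embedding (or isoperimetric) inequality you name in your final paragraph.
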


\cref{conj:weak distance trees} differs from \cref{conj:optimal distance trees} in that the error term does not depend on $|S|$, and we only control the total degree of $\mathcal{T}$ instead of the individual $r$-degrees of $\mathcal{T}$.

There is a natural barrier in our proof of \cref{thm:distance trees} to removing the dependence of the error term on $t$.
We discuss this issue in more detail and propose paths toward proving \cref{conj:optimal distance trees} in the concluding remarks.

\subsection{Expanders and the proof of \cref{thm:distance trees}}\label{sec:proof sketch}

\cref{thm:distance trees} is a special case of a more general theorem on embedding edge-colored trees into induced subgraphs of spectral expanders. 
Here we introduce the ideas used in the proof of this more general statement, and show how it implies \cref{thm:distance trees}.

Friedman and Pippenger \cite{friedman1987expanding} introduced an idea to embed a tree vertex by vertex inside a large expander by maintaining certain invariants. Haxell \cite{haxell2001tree} later improved this idea to show that each member of a certain family of expander graphs contains every possible nearly-spanning tree with bounded degree.
The first step toward proving \cref{thm:distance trees} is to generalize Haxell's result to embed edge-colored trees in families of expander graphs.
We need a few notations to state it.

Consider a family $\mathcal{G}=\{G_1,\ldots,G_t\}$ of graphs on the same vertex set $V(\mathcal{G})$. 
We usually view the indices in $[t]$ as colors and think of the family $\mathcal{G}$ as an edge-colored graph with an edge $e$ being colored with $i\in [t]$ if $e\in E(G_i)$ (note that $e$ may have multiple colors).
For every $X\subseteq V(\mathcal{G})\times [t]$, define $\Gamma_{\mathcal{G}}(X) = \bigcup_{(v,r)\in X} \Gamma_{G_{r}}(v)$.
For a graph $H$ and an edge-coloring map $f:E(H)\rightarrow [t]$, denote the corresponding edge-colored graph by $\mathcal{H}$. 
We say that $\mathcal{G}$ contains $\mathcal{H}$ if there is an embedding $\phi:\mathcal{H}\hookrightarrow \mathcal{G}$ such that for all $e\in E(H)$, we have $\phi(e)\in G_{f(e)}$.
We often say that $\mathcal{G}$ contains \textit{every $[t]$-colored $H$} if for every coloring $f:E(H)\rightarrow [t]$, the family $\mathcal{G}$ contains $\mathcal{H}$.

\begin{restatable}[Colorful version of Haxell's theorem]{theorem}{colorHaxell} \label{thm:haxell colored}
    Let $\Delta,m,k,t \in \mathbb{N}$ and let $\mathcal{G}=\{G_1,\ldots,G_t\}$ be a family of graphs on the same vertex set $V(\mathcal{G})$. 
    Suppose
    \begin{enumerate}
        \item $|\Gamma_{\mathcal{G}}(X)| \geq \Delta |X| + 1$ for all $X \subseteq V(\mathcal{G})\times [t]$ with $1 \leq |X| \leq m$, and
        \item $|\Gamma_{\mathcal{G}}(X)| \geq \Delta|X| + k$ for all $X \subseteq V(\mathcal{G})\times [t]$ with $m < |X| \leq 2m$. \label{condition:large sets}
    \end{enumerate}
    Then, $\mathcal{G}$ contains every $[t]$-colored tree $\mathcal{T}$ with at most $k$ vertices and maximum $r$-degree $\Delta_r(\mathcal{T}) \leq \Delta$ for each $r \in [t]$.
\end{restatable}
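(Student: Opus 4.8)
The plan is to adapt the Friedman--Pippenger vertex-by-vertex tree-embedding scheme, in the refined form due to Haxell, carrying all the bookkeeping on the colored neighborhoods $\Gamma_{\mathcal{G}}(\cdot)$ rather than on ordinary neighborhoods. Fix an arbitrary root of $\mathcal{T}$ and list its vertices $v_1,v_2,\dots$ in an order in which every $v_i$ with $i\ge 2$ comes after its parent $p(v_i)$. We build the embedding one vertex at a time: at step $i$ we extend an embedding $\phi_{i-1}$ of the subtree $\mathcal{T}_{i-1}:=\mathcal{T}[\{v_1,\dots,v_{i-1}\}]$ to an embedding $\phi_i$ of $\mathcal{T}_i$ with $\phi_i(e)\in E\bigl(G_{f(e)}\bigr)$ for every edge $e$ of $\mathcal{T}_i$, and the argument succeeds if and only if every step can be carried out.

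The engine of the proof is an invariant guaranteeing there is always enough ``room to grow''. Alongside $\phi_i$ we maintain, for each already-embedded vertex $v$ and each color $r\in[t]$, a reservoir $B_{v,r}\subseteq\Gamma_{G_r}(\phi_i(v))\setminus\phi_i(V(\mathcal{T}_i))$; the sets $B_{v,r}$ are pairwise disjoint and $|B_{v,r}|$ is at least the number of $r$-colored children of $v$ in $\mathcal{T}$ not yet embedded, so that a later $r$-child of $v$ can always be placed inside $B_{v,r}$---and then the corresponding edge automatically receives color $r$. Writing $\mathcal{B}_i\subseteq V(\mathcal{G})\times[t]$ for the set of all pairs $(b,r)$ with $b\in B_{v,r}$ for some already-embedded $v$ (so each reservoir vertex records the color it is held for), the decisive clause of the invariant is the colored expansion surplus $\bigl|\Gamma_{\mathcal{G}}(X)\setminus\phi_i(V(\mathcal{T}_i))\bigr|\ge\Delta|X|+\sigma(X)$ for every $X\subseteq\mathcal{B}_i$, where $\sigma(X)\ge 1$ always and $\sigma(X)$ is at least the number of vertices of $\mathcal{T}$ still to be embedded whenever $|X|>m$. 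Hypotheses (1) and (2) of the theorem are calibrated precisely to supply this: (1) gives the baseline $+1$ for small sets, and the $+k$ of (2) is the uniform reserve that, being spent only once per embedded vertex over at most $k$ steps, never runs out. Since expansion is measured on vertex--color pairs, reservoirs and the matchings built from them must respect colors throughout.

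The inductive step is where the work lies. To embed $v_i$ as an $r$-colored child of $u=p(v_i)$, the invariant gives $B_{u,r}\ne\emptyset$; but we may not simply place $v_i$ at an arbitrary vertex of $B_{u,r}$---we must choose $\phi_i(v_i)=x\in B_{u,r}$ \emph{and} re-establish the whole invariant for $\mathcal{T}_i$, which means carving out of the still-unused vertices fresh pairwise-disjoint reservoirs $B_{x,r'}$ ($r'\in[t]$) of the prescribed sizes for the new vertex $x$, while preserving the colored expansion surplus for the enlarged pair-set $\mathcal{B}_i$ and repairing any existing reservoir that lost $x$. Following Haxell, a workable choice of $x$ together with these new reservoirs is produced by a defect (deficiency) form of Hall's theorem, applied to the bipartite incidence structure between candidate $(\text{reservoir vertex},\text{color})$ pairs and their colored neighborhoods in the unused part: Hypotheses (1) and (2) are exactly the deficiency condition that forces the required system of disjoint representatives to exist, and the surplus accounting shows the choice can be made so that the invariant is restored with $\sigma$ lowered by one, matching the one fewer vertex remaining. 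It is likely cleanest to process the children of a vertex, or all vertices at a fixed tree-level, in a single batch, which only affects the size bounds in this Hall argument.

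The step I expect to be the main obstacle is pinning down the colored invariant so that it is at once strong enough for the defect-Hall step to succeed at every stage and self-reproducing after each embedding. The delicate ingredient, already present in Haxell's uncolored argument, is the reservoir repair: when placing $x$ depletes or over-constrains existing reservoirs, they must be refilled from the available pool without breaking disjointness or the global surplus, and here this has to be done coherently for vertex--color pairs so that the slack is always charged against the right color graph. Granting the invariant and its preservation, \Cref{thm:distance trees} follows routinely: passing if necessary to a robust subset of $S$ as furnished by \Cref{thm:distance stars}, take $G_r$ to be the distance-$r$ graph on that subset, verify Hypotheses (1) and (2) for suitable $m$ and $k$ from standard eigenvalue/expander-mixing estimates for these graphs, and note that an $R$-distance assignment on a tree is exactly the data of a $[t]$-coloring.
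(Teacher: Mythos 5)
Your outline points at the right family of arguments (Friedman--Pippenger style vertex-by-vertex embedding, refined \`a la Haxell), but it stops short of the actual content of the theorem: the one step you yourself flag as ``the main obstacle'' --- pinning down an invariant that is both strong enough to drive each extension step and self-reproducing --- is precisely the heart of the proof, and it is not supplied. Moreover, the specific mechanism you propose (pairwise-disjoint reservoirs $B_{v,r}$ for every embedded vertex and color, maintained via a defect form of Hall's theorem, with a surplus clause for every $X\subseteq\mathcal{B}_i$) runs into a concrete quantitative problem: hypotheses (1) and (2) only say anything about sets $X\subseteq V(\mathcal{G})\times[t]$ with $|X|\leq 2m$, whereas the collection of pending reservoir pairs, and hence the sets your Hall/SDR condition and your invariant must range over, can have size comparable to the number of unembedded children, i.e.\ up to order $k$ (or $\Delta k$). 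In the intended application $k$ is close to $|S|$ while $m$ is only about $t^{1/2}\Delta^{-1/2}\frac{n\lambda}{D}$, so $k\gg 2m$; nothing in the hypotheses certifies expansion, disjoint representatives, or the clause ``$\sigma(X)\geq$ number of unembedded vertices when $|X|>m$'' for sets of size beyond $2m$. As written, the defect-Hall step cannot be verified from (1) and (2), and the heuristic that the $+k$ surplus is ``spent once per embedded vertex'' is not an argument.

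For comparison, the paper's proof avoids reservoirs and Hall's theorem entirely. It defines, for any partial embedding $\phi$ of an edge-colored graph $\mathcal{H}$ and any $X\subseteq V(\mathcal{G})\times[t]$, the potential $R(X,\phi)=|\Gamma_{\mathcal{G}}(X)\setminus\phi(\mathcal{H})|-\sum_{(v,r)\in X}\bigl(\Delta-D_{\mathcal{H},r}(\phi^{-1}(v))\bigr)$, calls $\phi$ $2m$-good if $R(X,\phi)\geq 0$ whenever $|X|\leq 2m$, and proves three facts: any ``critical'' set ($R=0$, $|X|\leq 2m$) in fact has $|X|\leq m$ (this is exactly where hypothesis (2) and the bound $|V(\mathcal{H})|\leq k-1$ are used), $R$ is submodular, and unions of critical sets are critical and still of size at most $m$. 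The induction then adds one leaf at a time: if no candidate $w\in\Gamma_{G_1}(\phi(u))\setminus\phi(\mathcal{H})$ yields a $2m$-good extension, each $w$ has a critical witness $X_w$ with $w\in\Gamma(X_w)$ and $(\phi(u),1)\notin X_w$; the union of these witnesses plus the pair $(\phi(u),1)$ is a set of size at most $m+1\leq 2m$ with $R<0$, contradicting $2m$-goodness. The whole point of this bookkeeping is that only sets of size at most $2m$ ever need to be inspected, which is exactly the feature your reservoir/Hall formulation lacks; without an argument of this kind (or a worked-out substitute), the proposal has a genuine gap.
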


Next, we show that \cref{thm:haxell colored} can be applied to a family induced by a large vertex-subset of a family of good spectral expanders.
A graph $G$ with adjacency matrix $A_G$ is an $(n,D,\lambda)$-graph if $G$ is a $D$-regular graph on $n$ vertices and all but at most one of the eigenvalues of $A_G$ are bounded in absolute value by $\lambda$.
Note that the largest eigenvalue of a $D$-regular graph is always $D$.

One important property of $(n,D,\lambda)$-graphs is given by the classical expander mixing lemma.
For a graph $G$ and sets $X,Y \subseteq V(G)$, denote
\[e_G(X,Y) = \#\{(x,y) \in X \times Y: xy \in E(G)\}. \]
Notice that $e_G(X,Y)$ counts edges in $X \cap Y$ twice. We often drop the subscript from $e_G(X,Y)$ when the graph $G$ is clear from the context.
\begin{mixingLemma}[\cite{alon1988explicit, haemers1980eigenvalue}] \label{lem:expanderMixing}
    If $G$ is an $(n,D,\lambda)$-graph, and $X,Y \subseteq V(G)$, then
    \begin{equation} \label{eq:expander mixing lemma}
    \left |e(X,Y) - Dn^{-1}|X|\,|Y|\,\right| \leq \lambda \sqrt{|X|\,|Y|}. 
    \end{equation} 
\end{mixingLemma}

Given a graph $G$ and set $S \subseteq V(G)$, denote by $G[S]$ the subgraph of $G$ induced by~$S$.
Similarly, for a family $\mathcal{G} = \{G_1, \ldots, G_t\}$ of graphs and $S \subset V(\mathcal{G})$, denote $\mathcal{G}[S] = \{G_1[S], \ldots, G_t[S]\}$.

In general, subgraphs induced by vertex-subset of a family of $(n,D,\lambda)$-graphs may not satisfy the hypotheses of \cref{thm:haxell colored}.
Indeed, if $G$ is a graph satisfying \eqref{eq:expander mixing lemma}, and $X \subseteq S \subseteq V(G)$ with $|S|\,|X| < \left(\frac{n \lambda}{D} \right)^2$, then every vertex in $X$ may be isolated in $G[S]$. 
However, we show that this is essentially the only thing that can go wrong.
If $S \subseteq V(\mathcal{G})$ is sufficiently large, then we can find a large set $W \subseteq S$ such that $\mathcal{G}[W]$ has large minimum degree.

\begin{restatable}{lemma}{largeMinDegree}\label{thm:subgraphsWithLargeMinDegreeExist}
    Let $t \in \mathbb{N}$ and $C\ge 4t^{1/2}$ be a real number.
    Let $\mathcal{G} = \{G_1, \ldots, G_t\}$ be a family of $(n,D,\lambda)$-graphs on the same vertex set $V(\mathcal{G})$.
    Let $S \subseteq V(\mathcal{G})$ with $|S| = C \frac{n\lambda}{D}$.
    Then, there is a subset $W \subseteq S$ with $|W| \geq (1-8tC^{-2})|S|$ such that $\delta(G_r[W]) \geq 4^{-1}C\lambda$ for each $r\in [t]$.
\end{restatable}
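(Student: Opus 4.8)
The plan is to iteratively delete vertices that have small degree in *some* color class, and to use the Expander Mixing Lemma to bound how many deletions this process can perform before it stabilizes. Concretely, set the threshold $\tau = 4^{-1}C\lambda$ and build a decreasing sequence $S = W_0 \supseteq W_1 \supseteq \cdots$: given $W_j$, if there exists a color $r \in [t]$ and a vertex $v \in W_j$ with $e_{G_r}(\{v\}, W_j) < \tau$, delete one such vertex to form $W_{j+1}$; otherwise stop and output $W = W_j$. When the process stops, by construction $\delta(G_r[W]) \geq \tau = 4^{-1}C\lambda$ for every $r$, so it remains only to show the process does not delete too many vertices — i.e., that it stops with $|W| \geq (1 - 8tC^{-2})|S|$.

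For the counting bound, suppose toward a contradiction that the process runs long enough to delete $8tC^{-2}|S|$ vertices; let $U \subseteq S$ be the first $8tC^{-2}|S|$ deleted vertices and let $W' = S \setminus U$, so $|U| = 8tC^{-2}|S|$ and $|W'| = (1-8tC^{-2})|S| \geq |S|/2$ since $C \geq 4t^{1/2}$ forces $8tC^{-2} \leq 1/2$. Each vertex $v \in U$ was deleted at a moment when the current vertex set contained $W'$ (as $v$ was among the first $|U|$ deletions and the set only shrinks), so at that moment $e_{G_{r(v)}}(\{v\}, \text{current set}) < \tau$ for the offending color $r(v)$, hence $e_{G_{r(v)}}(\{v\}, W') < \tau$ as well, since $W'$ is a subset of the current set. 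By pigeonhole there is a single color $r^*$ with $|U_{r^*}| \geq |U|/t$, where $U_{r^*} = \{v \in U : r(v) = r^*\}$. Then
\[
e_{G_{r^*}}(U_{r^*}, W') = \sum_{v \in U_{r^*}} e_{G_{r^*}}(\{v\}, W') < \tau\, |U_{r^*}|.
\]
On the other hand, the Expander Mixing Lemma applied to $G_{r^*}$ gives
\[
e_{G_{r^*}}(U_{r^*}, W') \geq \frac{D}{n}\,|U_{r^*}|\,|W'| - \lambda \sqrt{|U_{r^*}|\,|W'|} \geq |U_{r^*}|\left( \frac{D}{n}|W'| - \lambda \sqrt{\frac{|W'|}{|U_{r^*}|}} \right).
\]
Now plug in $|W'| \geq |S|/2 = \tfrac12 C \tfrac{n\lambda}{D}$, so $\tfrac{D}{n}|W'| \geq \tfrac12 C\lambda = 2\tau$; and $|U_{r^*}| \geq |U|/t = 8C^{-2}|S| = 8C^{-1}\tfrac{n\lambda}{D}$, so $\sqrt{|W'|/|U_{r^*}|} \leq \sqrt{(\tfrac12 C \tfrac{n\lambda}{D})/(8C^{-1}\tfrac{n\lambda}{D})} = \sqrt{C^2/16} = C/4$, whence $\lambda\sqrt{|W'|/|U_{r^*}|} \leq C\lambda/4 = \tau$. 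Combining, $e_{G_{r^*}}(U_{r^*}, W') \geq |U_{r^*}|(2\tau - \tau) = \tau |U_{r^*}|$, contradicting the upper bound $e_{G_{r^*}}(U_{r^*}, W') < \tau |U_{r^*}|$. Hence the process terminates after fewer than $8tC^{-2}|S|$ deletions, giving $|W| \geq (1 - 8tC^{-2})|S|$.

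The routine parts are checking the constants line up (the factor $8t$ and the $4^{-1}$ in the degree bound are calibrated precisely so that the two estimates $\tfrac{D}{n}|W'| \geq 2\tau$ and $\lambda\sqrt{|W'|/|U_{r^*}|} \leq \tau$ leave exactly a $\tau$ gap), and verifying $|S| = C\tfrac{n\lambda}{D}$ is used consistently. The one point that needs care — the main subtlety rather than a genuine obstacle — is the bookkeeping that a vertex deleted early still has small degree into the \emph{final-ish} set $W'$: this works because we only look at the first $|U|$ deletions, so every such vertex's offending color-degree, measured against the set present at deletion time, upper-bounds its color-degree against the smaller set $W'$. One should also note the degenerate case where the process stops before deleting $|U|$ vertices, in which case there is nothing to prove; the argument above is the contrapositive of "if it deletes $\geq |U|$, we reach a contradiction."
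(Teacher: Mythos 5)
Your overall strategy is exactly the paper's: greedily delete vertices whose degree in some color into the current set falls below the threshold $\tau=4^{-1}C\lambda$, observe that each of the first $8tC^{-2}|S|$ deleted vertices still has its offending color-degree below $\tau$ into the final set $W'=S\setminus U$ (since $W'$ is contained in the set present at deletion time), pigeonhole a color $r^*$ capturing at least a $t^{-1}$ fraction of the deleted vertices, and contradict the expander mixing lemma applied to $e_{G_{r^*}}(U_{r^*},W')$. However, your final numerical step contains an error of inequality direction. To bound the subtracted term $\lambda\sqrt{|W'|/|U_{r^*}|}$ from above you substitute $|W'|=\tfrac12 C\tfrac{n\lambda}{D}=|S|/2$, but $|S|/2$ is a \emph{lower} bound on $|W'|$: in fact $|W'|=(1-8tC^{-2})|S|\ge |S|/2$, and it can be close to $|S|$ when $t$ is small compared to $C$. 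The only a priori upper bound is $|W'|\le |S|$, which gives $\lambda\sqrt{|W'|/|U_{r^*}|}\le C\lambda/(2\sqrt{2})>\tau$; combined with $\tfrac{D}{n}|W'|\ge 2\tau$ this yields only $e_{G_{r^*}}(U_{r^*},W')\ge |U_{r^*}|\,C\lambda\bigl(\tfrac12-\tfrac{1}{2\sqrt{2}}\bigr)\approx 0.15\,C\lambda|U_{r^*}|$, which does not contradict the upper bound $\tau|U_{r^*}|=0.25\,C\lambda|U_{r^*}|$. So the claimed ``exact $\tau$ gap'' does not follow as written.

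The gap is repairable and your constants are in fact correct; the missing observation is that $|W'|$ appears in both terms. Either note that $w\mapsto \tfrac{D}{n}w-\lambda\sqrt{w/|U_{r^*}|}$ is increasing for $w\ge \tfrac{n^2\lambda^2}{4D^2|U_{r^*}|}$, a quantity below $|S|/2$ here, so evaluating at $w=|S|/2$ legitimately lower-bounds the expression and your numbers then go through; or do as the paper does and keep $|W'|=(1-8tC^{-2})|S|$ symbolically in both terms, reducing the desired contradiction to $\alpha-\sqrt{\alpha/8}\ge \tfrac14$ for $\alpha=1-8tC^{-2}\in[\tfrac12,1]$, which holds because the left-hand side is increasing in $\alpha$ and equals $\tfrac14$ at $\alpha=\tfrac12$ (this is precisely where the hypothesis $C\ge 4t^{1/2}$ enters). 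With either repair your argument coincides with the paper's proof.
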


It turns out that $\mathcal{G}[W]$ for the $W$ found in \cref{thm:subgraphsWithLargeMinDegreeExist} satisfies the hypotheses of \cref{thm:haxell colored}, which leads to the following result.

\begin{restatable}{theorem}{applicationHaxell}\label{thm:coloredTreesInInducedSubgraphs}
    Let $t \in \mathbb{N}$, and let $\mathcal{G} = \{G_1,\ldots,G_t\}$ be a family of $(n,D,\lambda)$-graphs on the same vertex set $V(\mathcal{G})$.
    Let $\Delta \geq 2$ be an integer, and let $S \subseteq V(\mathcal{G})$.
    Then, $\mathcal{G}[S]$ contains every $[t]$-colored tree $\mathcal{T}$ with at most $|S| - 10 (t\Delta)^{1/2} \frac{n \lambda}{D}$ vertices and maximum $r$-degree $\Delta_r(\mathcal{T}) \leq \Delta$ for each $r \in [t]$.
\end{restatable}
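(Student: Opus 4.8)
The strategy is to pass to a large subset $W\subseteq S$ on which the colorful Haxell theorem (\cref{thm:haxell colored}) applies, and to note that any embedding into $\mathcal G[W]$ is also one into $\mathcal G[S]$ because $G_r[W]$ is a subgraph of $G_r[S]$ for every $r$. Write $k=|V(\mathcal T)|$; we may assume the statement is not vacuous, so $k\ge1$ and $|S|\ge k+10(t\Delta)^{1/2}\tfrac{n\lambda}{D}\ge 10(t\Delta)^{1/2}\tfrac{n\lambda}{D}$. Put $C=|S|\tfrac D{n\lambda}$, so that $|S|=C\tfrac{n\lambda}{D}$ and $C\ge 10(t\Delta)^{1/2}\ge 4t^{1/2}$ (using $\Delta\ge 2$); then \cref{thm:subgraphsWithLargeMinDegreeExist} provides $W\subseteq S$ with $|W|\ge(1-8tC^{-2})|S|$ and $\delta(G_r[W])\ge\delta:=\tfrac14 C\lambda$ for each $r\in[t]$. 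It therefore suffices to choose a positive integer $m$ for which $\mathcal G[W]$ meets the two expansion hypotheses of \cref{thm:haxell colored}.

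The key input is a lower bound on $|\Gamma_{\mathcal G[W]}(X)|$ for $X\subseteq W\times[t]$. Write $x=|X|$, pick a colour $r^\ast$ whose slice $X_{r^\ast}\subseteq W$ is largest, of size $x^\ast\ge x/t$, and set $N=\Gamma_{\mathcal G[W]}(X)\supseteq\Gamma_{G_{r^\ast}[W]}(X_{r^\ast})$. Applying the Expander Mixing Lemma to $G_{r^\ast}$ in two different ways yields
\[
|N|\ \ge\ |W|-\frac{t\,(n\lambda/D)^2}{x}
\qquad\text{and}\qquad
|N|\ \ge\ \frac{\delta^{2}x^\ast}{\lambda^{2}+4D\delta x^\ast/n}\ \ge\ \min\!\Big(\frac{C^{2}x}{32\,t},\ \frac{|S|}{32}\Big).
\]
The first holds since $X_{r^\ast}$ sends no $G_{r^\ast}$-edge to $W\setminus N$, so the mixing lemma forces $x^\ast|W\setminus N|\le(n\lambda/D)^{2}$. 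For the second, every vertex of $X_{r^\ast}$ has at least $\delta$ $G_{r^\ast}$-neighbours in $W$, all lying in $N$, so $e_{G_{r^\ast}}(X_{r^\ast},N)\ge\delta x^\ast$; comparing with the upper mixing bound on $e_{G_{r^\ast}}(X_{r^\ast},N)$ and solving the resulting quadratic inequality for $|N|$ gives the displayed bound, using $\delta=\tfrac14 C\lambda$, $x^\ast\ge x/t$ and $C\lambda n/D=|S|$. The first estimate is strong for large $x$ and the second for small $x$. I expect the second estimate to be the heart of the matter: neither the bare minimum degree $\delta$ nor the bare mixing lemma alone is adequate for small $x$ once $n/D$ is large, and it is precisely their combination through the mixing lemma that closes the gap.

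Finally one chooses $m$ of order $(t/\Delta)^{1/2}\tfrac{n\lambda}{D}$ so that \emph{both} hypotheses of \cref{thm:haxell colored} hold. Using the displayed bounds, $\Delta\ge2$, and $|W|\ge(1-8tC^{-2})|S|$, one obtains $|W|-k\ge\tfrac{48}{5}(t\Delta)^{1/2}\tfrac{n\lambda}{D}$; taking $m$ to be the least integer with $m\ge\tfrac{2t(n\lambda/D)^{2}}{|W|-k}$ makes $\tfrac{t(n\lambda/D)^{2}}{m}\le\tfrac12(|W|-k)$ as well as $2\Delta m\le\tfrac12(|W|-k)$. Then for $m<x\le 2m$ the first estimate gives $|N|\ge|W|-\tfrac{t(n\lambda/D)^{2}}{m}\ge 2\Delta m+k\ge\Delta x+k$, which is condition~(2) of \cref{thm:haxell colored}. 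For condition~(1), with $1\le x\le m$: if $\tfrac{C^{2}x}{32t}\le\tfrac{|S|}{32}$ the second estimate gives $|N|\ge\tfrac{C^{2}x}{32t}\ge 3\Delta x>\Delta x+1$ since $C^{2}\ge 100\,t\Delta$; otherwise $|N|\ge\tfrac{|S|}{32}\ge\Delta m+1\ge\Delta x+1$ because $m$ is small relative to $|S|$. \cref{thm:haxell colored} now embeds $\mathcal T$ into $\mathcal G[W]$, and hence into $\mathcal G[S]$. The main difficulty here is purely the bookkeeping: one must tune $m$ so both Haxell hypotheses hold at once — which is feasible only because the constant $10$ in the size hypothesis leaves enough slack — and separately dispose of the degenerate range where $n\lambda/D$ is within an absolute constant of its minimum (equivalently $n-D=O(1)$, each $G_r$ nearly complete), where the estimates above are unnecessary and a direct greedy embedding suffices.
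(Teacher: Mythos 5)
Your proposal follows the same route as the paper: pass to $W$ via \cref{thm:subgraphsWithLargeMinDegreeExist}, pigeonhole each $X\subseteq W\times[t]$ onto its largest colour class, and verify the two hypotheses of \cref{thm:haxell colored} with the expander mixing lemma. Your two displayed neighbourhood estimates are correct (I checked the quadratic manipulation behind the second one), as are the reduction to $\mathcal G[W]$, the bound $|W|-k\ge\tfrac{48}{5}(t\Delta)^{1/2}\tfrac{n\lambda}{D}$, and the treatment of hypothesis (2). The gap is in the endgame for hypothesis (1). Your claims about the chosen $m$ --- ``$2\Delta m\le\tfrac12(|W|-k)$'' and, more seriously, ``$\tfrac{|S|}{32}\ge\Delta m+1$'' --- are not consequences of the theorem's hypotheses: after unwinding they require a lower bound on $(t/\Delta)^{1/2}\tfrac{n\lambda}{D}$ (an absolute constant for the first, roughly $10$ for the second), and nothing prevents $\Delta$ from being comparable to or larger than $t(n\lambda/D)^2$. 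A concrete realizable instance: let $G_1$ be the complement of a $5\cdot 10^4$-regular Ramanujan-type graph on $n=10^6$ vertices, so $G_1$ is an $(n,D,\lambda)$-graph with $D\approx 9.5\cdot 10^5$, $\lambda\approx 451$, $\tfrac{n\lambda}{D}\approx 475$; take $t=1$, $\Delta=36000$, $|S|=9.4\cdot10^5$ and a tree on $k=37000$ vertices containing a vertex of degree $\Delta$. The size hypothesis $k\le |S|-10(t\Delta)^{1/2}\tfrac{n\lambda}{D}$ holds, your $m$ equals $1$, and $x=1$ lands in your ``otherwise'' branch, but $\tfrac{|S|}{32}\approx 29400<\Delta m+1=36001$, so your verification of hypothesis (1) collapses there (hypothesis (1) is in fact true in this instance, e.g.\ directly from $\delta(G_r[W])\ge\tfrac14 C\lambda\approx 2.2\cdot10^5$, a bound your write-up never uses on its own).

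Moreover, this regime is not covered by your declared fallback: you delimit the degenerate range as ``$n\lambda/D$ within an absolute constant of its minimum, equivalently $n-D=O(1)$, each $G_r$ nearly complete,'' but the range where your inequalities fail is essentially $\Delta\gtrsim t(n\lambda/D)^2$ (up to the constants above), which only forces the graphs to be fairly dense --- in the example $n-D=5\cdot 10^4$ --- so they are nowhere near complete, and the asserted greedy embedding is neither applicable as you describe it nor argued. The paper's bookkeeping sidesteps exactly this issue: it takes $m=t^{1/2}\Delta^{-1/2}\tfrac{n\lambda}{D}$ and verifies hypothesis (1) by contradiction, pitting the minimum degree against the mixing upper bound, namely $\tfrac14C\lambda|Z|\le e(Z,\Gamma(Z))\le \tfrac{D}{n}\Delta m|Z|+\lambda|Z|(t\Delta)^{1/2}=2(t\Delta)^{1/2}\lambda|Z|$, which reduces to $\tfrac14C\le 2(t\Delta)^{1/2}$ versus $C>10(t\Delta)^{1/2}$ with no stray additive $\Delta$ or $+1$ terms, and hence works for every admissible $\Delta$. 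To repair your version, replace the route through $\tfrac{|S|}{32}$ by a small-$x$ argument that uses the minimum degree $\tfrac14C\lambda$ directly (as in the paper), or genuinely isolate and prove the dense case $n-D\lesssim (n\lambda/D)^2$ with a greedy argument; as written, the proposal does not establish the theorem in the large-$\Delta$ regime.
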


In order to derive \cref{thm:distance trees} using \cref{thm:coloredTreesInInducedSubgraphs}, we need to associate the distances of $R \subseteq \F_q^*$ with a family of $(n,D,\lambda)$-graphs.
Denote by $G_r^d$ the graph with vertex set $V(G_r^d) = \F_q^d$ such that $xy \in E(G_r^d)$ if and only if $\|x-y\| = r$.
The following result was proved by Iosevich and Rudnev \cite{iosevich2007erdos}, and independently by Medrano, Myers, Stark, and Terras \cite{medrano1996finite}.
\begin{theorem}[\cite{iosevich2007erdos, medrano1996finite}]\label{thm:singleDistanceExpansion}
    If $r \neq 0$ , then $G_r^d$ is a $(q^d, q^{d-1}+O(q^{\frac{d-1}{2}}), 2q^{\frac{d-1}{2}})$-graph.
\end{theorem}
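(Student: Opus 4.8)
The plan is to identify $G_r^d$ with a Cayley graph and compute all of its eigenvalues through additive characters. Fix a nontrivial additive character $\psi$ of $\F_q$, and for $\xi \in \F_q^d$ write $\chi_\xi(x) = \psi(\xi \cdot x)$ with $\xi \cdot x = \sum_{i=1}^d \xi_i x_i$. Put $\Sigma_r = \{z \in \F_q^d : \|z\| = r\}$. Since $\|{-z}\| = \|z\|$, the set $\Sigma_r$ is symmetric, so $G_r^d = \mathrm{Cay}(\F_q^d, \Sigma_r)$ is an undirected, loopless (as $r \neq 0$) Cayley graph; its adjacency matrix is diagonalized by the orthogonal eigenbasis $\{(\chi_\xi(x))_x : \xi \in \F_q^d\}$, with eigenvalue $\lambda_\xi = \sum_{z \in \Sigma_r} \psi(\xi \cdot z)$. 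In particular $\lambda_0 = |\Sigma_r|$ is the degree $D$, and this is the ``exceptional'' eigenvalue; it remains to prove $|\lambda_\xi| \leq 2q^{(d-1)/2}$ for every $\xi \neq 0$, and that $|\Sigma_r| = q^{d-1} + O(q^{(d-1)/2})$.

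Next I would convert $\lambda_\xi$ into a one-variable exponential sum. Expanding the indicator via $\mathds{1}[\|z\| = r] = q^{-1}\sum_{t \in \F_q}\psi(t(\|z\|-r))$ and summing over $z \in \F_q^d$ factors the result over coordinates:
\[
\lambda_\xi = \frac{1}{q}\sum_{t \in \F_q}\psi(-tr)\prod_{i=1}^d \Big(\sum_{z_i \in \F_q}\psi(t z_i^2 + \xi_i z_i)\Big).
\]
The $t = 0$ term is $q^{-1}\prod_i \sum_{z_i}\psi(\xi_i z_i)$, which vanishes once $\xi \neq 0$. For $t \neq 0$, completing the square --- here is where $q$ odd is needed, to invert $2t$ --- rewrites the $i$-th inner sum as $\psi(-\xi_i^2/4t)\,\eta(t)\,g$, where $\eta$ is the quadratic character and $g = \sum_{w \in \F_q}\psi(w^2)$ is the quadratic Gauss sum, with $|g| = q^{1/2}$ and $g^2 = \eta(-1)q$. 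Taking the product over $i$ gives, for every $\xi \neq 0$,
\[
\lambda_\xi = \frac{g^d}{q}\sum_{t \in \F_q^*}\eta(t)^d\,\psi\!\Big({-tr} - \frac{\|\xi\|}{4t}\Big),
\qquad |g^d| = q^{d/2}.
\]

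Finally I would bound the remaining sum $\sum_{t \in \F_q^*}\eta(t)^d\psi(-tr - \|\xi\|/4t)$ by $2q^{1/2}$, which gives $|\lambda_\xi| \leq q^{d/2 - 1}\cdot 2q^{1/2} = 2q^{(d-1)/2}$. When $\|\xi\| \neq 0$ and $d$ is even this is a Kloosterman sum, and when $d$ is odd (so $\eta(t)^d = \eta(t)$) it is a Sali\'e sum; in either case Weil's square-root bound gives modulus at most $2q^{1/2}$. When $\|\xi\| = 0$ (with $\xi \neq 0$ still) the sum degenerates to $\sum_{t \neq 0}\psi(-tr) = -1$ for $d$ even and to a quadratic Gauss sum of modulus $q^{1/2}$ for $d$ odd, both safely below $2q^{1/2}$. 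For the degree, the same identity at $\xi = 0$ gives $|\Sigma_r| = q^{d-1} + q^{-1}g^d\sum_{t \neq 0}\eta(t)^d\psi(-tr)$, whose second term has modulus at most $q^{(d-1)/2}$, so $D = q^{d-1} + O(q^{(d-1)/2})$. The only real difficulty is bookkeeping: one must handle the two parities of $d$ and the degenerate locus $\|\xi\| = 0$ separately, invoking in each case the appropriate instance of square-root cancellation (Weil's estimates for Kloosterman and Sali\'e sums); everything else follows mechanically once $\Sigma_r$ is seen to be symmetric and the square is completed.
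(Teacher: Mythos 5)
Your proposal is correct, and it is essentially the proof given in the works the paper cites for this statement (the paper itself does not reprove \cref{thm:singleDistanceExpansion}): identifying $G_r^d$ as a Cayley graph on $(\F_q^d,+)$, computing its eigenvalues as character sums over the sphere, completing the square to pull out Gauss sum factors, and bounding the resulting Kloosterman (even $d$) or Sali\'e (odd $d$) sums by $2q^{1/2}$ via Weil, with the degenerate case $\|\xi\|=0$, $\xi\neq 0$ and the degree count $|\Sigma_r|=q^{d-1}+O(q^{(d-1)/2})$ handled exactly as you do. No gaps; this matches the standard argument of Medrano--Myers--Stark--Terras and Iosevich--Rudnev.
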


If $r=0$ and $d$ is odd, then $G_r^d$ is a $(q^d,q^{d-1},2q^{\frac{d-1}{2}})$-graph - for example, see Proposition 2.2 in \cite{koh2015distance}.
Consequently, for odd $d$, the assumption that $0 \notin R$ can be removed from \cref{thm:distance trees} without changing the conclusion.
If $r=0$ and $d$ is even, then $G_r^d$ is a $(q^d, q^{d-1} + O(q^{d/2}), q^{d/2})$-graph, and so the assumption that $0 \notin R$ is necessary for the proof of \cref{thm:distance trees} to work.

\cref{thm:singleDistanceExpansion} implies that $G_r^d$ is a $D$-regular graph, for some $D = q^{d-1}+O(q^{(d-1)/2})$.
The $O(q^{(d-1)/2})$ term depends on $d$, $q$, and the quadratic character of $r$.
For odd dimensions $d$, this term is always either $-q^{(d-1)/2}$ or $q^{(d-1)/2}$, and for even dimensions it is always either $-q^{(d-2)/2}$ or $q^{(d-2)/2}$.
We will often use the following bounds to state our results more simply: \[(2/3)q^{d-1} \leq D \leq (4/3)q^{d-1}.\]

With \cref{thm:singleDistanceExpansion} in hand,
\cref{thm:distance trees,thm:distance stars} are straightforward corollaries of \cref{thm:coloredTreesInInducedSubgraphs,thm:subgraphsWithLargeMinDegreeExist}, respectively.

\begin{proof}[Proof of \cref{thm:distance trees}]
    \cref{thm:distance trees} is the special case of \cref{thm:coloredTreesInInducedSubgraphs} applied to $\mathcal{G} = \{G_r^d: r \in R\}$.
    By \cref{thm:singleDistanceExpansion}, we may use $n=q^d$, $D \geq (2/3)q^{d-1}$, and $\lambda = 2q^{(d-1)/2}$.
\end{proof}

\begin{proof} [Proof of \cref{thm:distance stars}]
\cref{thm:distance stars} follows immediately from \cref{thm:subgraphsWithLargeMinDegreeExist} and \cref{thm:singleDistanceExpansion}.
\end{proof}

%%%%%%%%%%%%%%%%%%%%%%%%%%%%%%%%%%%%%%%
\subsection{Comparison with earlier work}

\cref{thm:distance trees} yields nearly-spanning distance trees, where previous work has focused on finding much smaller structures.
However, our work yields quantitative improvements to these earlier bounds even for relatively small trees and paths.

Pham, Senger, Tait, and Thu \cite{pham2022geometric} proved that, if $\mathcal{T}$ is a $[t]$-edge-colored tree with $t$ vertices, $\mathcal{G} = (G_1, \ldots, G_t)$ is a collection of $(n,D,\lambda)$ graphs, and $S \subseteq V(\mathcal{G})$ with $|S| \geq t^2 \frac{n\lambda}{D}$, then $\mathcal{G}[S]$ contains at least $|S|\,t^{-1} - \frac{n\lambda}{D}$ disjoint copies of $\mathcal{T}$.

Neglecting constants, \cref{thm:coloredTreesInInducedSubgraphs} implies a stronger result for smaller sets.
Observe that a tree on $t$ vertices uses at most $t-1$ distinct colors, and has maximum degree at most $t-1$.
\cref{thm:coloredTreesInInducedSubgraphs} implies that, if $|S| = \Omega(t \frac{n \lambda}{D})$, then $S$ contains every $[t]$-edge-colored tree with maximum degree $t$ on at most $|S|- \Omega(t \frac{n \lambda}{D})$ vertices.
In particular, $S$ contains every tree that can be constructed by taking $|S|t^{-1} - \Omega(\frac{n \lambda}{D})$ disjoint copies of $\mathcal{T}$, and connecting them without introducing new colors or increasing the maximum degree.

Bennett, Chapman, Covert, Hart, Iosevich, and Pakianathan \cite{bennett2016long} proved that if $S \subseteq \F_q^d$ with $|S| \geq 4t(\ln 2)^{-1}q^{(d+1)/2}$, then $S$ contains every distance path of length $t$.
\cref{thm:arbitraryPaths} (which is a special case of \cref{thm:distance trees} with a simpler proof) implies that, for $R \subseteq \F_q^*$ with $|R|=t$, $S$ contains every $R$-distance path of length less than $|S|-4t^{1/2}q^{(d+1)/2}$.
We can use the fact that $|\F_q^*| = q-1$ to obtain the result that $S$ contains every distance path of length $|S| - 4q^{(d+2)/2}$.

\subsection{Organization of the paper}

To complete the proof of \cref{thm:distance trees}, we still need to prove \cref{thm:haxell colored,thm:subgraphsWithLargeMinDegreeExist,thm:coloredTreesInInducedSubgraphs}. However, before doing that, in the next section, we give a simpler proof of \cref{thm:distance trees} for the case of paths, without using \cref{thm:haxell colored}. In \cref{sec:haxell colored}, we prove \cref{thm:haxell colored}, and in \cref{sec:induced subgraphs}, we prove \cref{thm:subgraphsWithLargeMinDegreeExist,thm:coloredTreesInInducedSubgraphs}, thus finally completing the proof of the ingredients of \cref{thm:distance trees}.
\cref{sec:induced subgraphs} also contains the proof of \cref{thm:single distance trees}. In \cref{sec:constructions}, we provide constructions showing that \cref{thm:single distance trees,conj:optimal distance trees} are essentially tight up to constant factors. We end with a discussion on possible approaches to prove \cref{conj:optimal distance trees}, and offer a more general conjecture on families of expanders.

%%%%%%%%%%%%%%%%%%%%%%%%%%%%%%%%
\section{Finding paths with depth-first search}

In this section, we use a modified depth-first search algorithm to embed paths with multiple distances.
This yields a simple proof of the special case of \cref{thm:distance trees} for embedding distance paths.
Depth-first search is a standard tool to find long paths in graphs, for instance, see Section~7 in Krivelevich's survey on expander graphs \cite{krivelevich_2019}.

We include two variations on the proof, which both rely on the same algorithm.
The first proof uses \cref{thm:singleDistanceExpansion}.
The second proof yields a slightly less general result, but relies on a point-sphere incidence bound instead of \cref{thm:singleDistanceExpansion}.
Although this point-sphere incidence bound is tight in general, it may be possible to improve it in the particular case we use.
Together with our proof, this would lead to progress on \cref{conj:optimal distance trees}.
We discuss this potential approach in more detail in the concluding remarks. 

The input to our algorithm is a collection $\mathcal{G}=\{G_1, \ldots, G_t\}$ of graphs on the same vertex set $V(\mathcal{G})$ and a $[t]$-colored path $\mathcal{P}=(v_1,\ldots,v_{\ell})$ that we intend to embed in $\mathcal{G}$. 
We maintain sets of vertices $A,B_1,\ldots,B_t$ and an ordered list $U$, which are updated as the algorithm proceeds.
Initially, $A = V(\mathcal{G})$ and $U,B_1,\ldots,B_t$ are all empty.
The algorithm terminates when $A$ and $U$ are both empty.

Each round of the algorithm either moves a vertex from $A$ to $U$, or from $U$ to one of the sets $B_1, \ldots, B_t$, as follows.
If $U$ is empty, move a vertex from $A$ to $U$.
Otherwise, suppose that $U=(u_1,\ldots,u_k)$, and let $r_k$ be the color of the edge $v_k v_{k+1}$ in the path $\mathcal{P}$. 
If there is a vertex $w \in A$ such that $u_k w \in E(G_{r_k})$, then move $w$ from $A$ to the end of $U$ (setting $u_{k+1}=w$).
Otherwise, move $u_k$ to $B_{r_k}$.
The key properties maintained at each step of this algorithm are as follows:
\begin{enumerate}[leftmargin=*]
    \item $\mathcal{G}[U]$ always contains a copy of the edge-colored path induced by the first $|U|$ vertices of~$\mathcal{P}$. 
    \item For each $r \in [t]$, we have $e_{G_r}(A,B_r) = 0$.
\end{enumerate}
The first property follows from the fact that whenever we add a vertex to $U$, it has an edge of the appropriate color to the previous vertex in $U$.
The second property follows from the fact that a vertex is only added to $B_r$ if it does not have any neighbor in $A$ in $G_{r}$.

Both proofs in this section proceed by showing that, for certain choices of numbers $a,b$, if $|A| = a$, then $\sum_{r \in [t]}|B_{r}| \leq b$.
Consequently, at the step of the algorithm when $|A|=a$, we have $|U| \geq |V(\mathcal{G})| - (a+b)$, and hence it is possible to embed $\mathcal{P}$ into $\mathcal{G}$ as long as the number $\ell$ of vertices in $\mathcal{P}$ satisfies $\ell\le |V(\mathcal{G})| - (a+b)$. 

\begin{theorem}\label{thm:arbitraryPaths}
    Let $\mathcal{G} = \{G_1, \ldots, G_t\}$ be a family of $(n,D,\lambda)$-graphs on the same vertex set $V(\mathcal{G})$, and let $S \subseteq V(\mathcal{G})$.
    Then, $\mathcal{G}[S]$ contains every $[t]$-colored path $\mathcal{P}$ with at most ${|S| - 2  \lambda n D^{-1} t^{1/2}}$ vertices.
\end{theorem}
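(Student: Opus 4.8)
The plan is to run the modified depth-first search described above on the family $\mathcal{G}[S]$ (formally, take the common vertex set to be $S$ and replace each $G_r$ by $G_r[S]$) with input the path $\mathcal{P}$, and to track the sizes of $A$, $U$, and $B_1,\dots,B_t$ as the algorithm proceeds. At every step these $t+2$ sets partition $S$, so $|U| = |S| - |A| - \sum_{r\in[t]}|B_r|$; moreover each round changes $|A|$ and $|U|$ each by at most $1$, and $|A|$ is non-increasing, so $|A|$ realizes every integer value from $|S|$ down to $0$. Hence it suffices to exhibit one step at which $|U|\ge \ell$, where $\ell := |V(\mathcal{P})|$ (which we may assume is $\ge 1$): since $|U|$ starts at $0$ and changes by $\pm1$ each step, it then attains the value $\ell$ at some step, and by the first invariant $\mathcal{G}[U]$ — hence $\mathcal{G}[S]$, as $U\subseteq S$ — contains the $[t]$-colored path on the first $\ell$ vertices of $\mathcal{P}$, which is $\mathcal{P}$ itself. (To be fully clean one halts the algorithm as soon as $|U|=\ell$.)

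First I would convert the second invariant into a size bound using the Expander Mixing Lemma. At any step and for any color $r$, the invariant $e_{G_r[S]}(A,B_r)=0$ together with $A,B_r\subseteq S$ gives $e_{G_r}(A,B_r)=0$, so applying the lemma with $X=A$, $Y=B_r$ in the $(n,D,\lambda)$-graph $G_r$ yields $Dn^{-1}|A|\,|B_r|\le \lambda\sqrt{|A|\,|B_r|}$, whence $|A|\,|B_r|\le(\lambda n/D)^2$ and $|B_r|\le (\lambda n/D)^2/|A|$ whenever $|A|\ge1$. Summing over the $t$ colors gives $\sum_r|B_r| \le t(\lambda n/D)^2/|A|$. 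Writing $x := t^{1/2}\lambda nD^{-1}$ (the case $\lambda=0$ being trivial, since then $|B_r|=0$ throughout and $|U|$ reaches $|S|$), I would look at the first step with $|A| = a := \lceil x\rceil$, which exists and satisfies $1\le a\le|S|$ under the hypothesis $\ell\le|S|-2x$. At that step,
\[
|A| + \sum_r|B_r| \;\le\; a + \frac{x^2}{a} \;=\; 2x + \frac{(a-x)^2}{a} \;<\; 2x+1,
\]
using $0\le a-x<1$ and $a\ge1$. Therefore $|U| = |S| - |A| - \sum_r|B_r| > |S| - 2x - 1 \ge \ell - 1$, and since $|U|$ is an integer this forces $|U|\ge\ell$, completing the embedding.

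The only genuinely substantive step is the passage ``no $G_r$-edges between $A$ and $B_r$'' $\Rightarrow$ ``$|B_r|$ small'', which is a single application of the Expander Mixing Lemma; everything else is bookkeeping for the depth-first search invariants. Accordingly I expect the main obstacle to be only that bookkeeping: confirming that $|A|$ actually hits the chosen value $a$, that $|U|$ really sweeps up through $\ell$, that the rounding in the choice of $a$ costs nothing beyond the constant $2$ (handled above via the two integrality observations $|U|\in\mathbb{Z}$ and $\ell\in\mathbb{Z}$), and dealing with degenerate cases such as $\lambda=0$ or $\ell\le1$; none of this is deep. Finally, combining this theorem with \cref{thm:singleDistanceExpansion} (taking $n=q^d$, $D\ge(2/3)q^{d-1}$, $\lambda=2q^{(d-1)/2}$) immediately yields the distance-path special case of \cref{thm:distance trees}; and since the sole use of $(n,D,\lambda)$-expansion is the mixing-lemma estimate on $|B_r|$, replacing that estimate by a sharper bound on incidences between points and spheres in $\F_q^d$ is exactly the route, sketched in the surrounding discussion, toward progress on \cref{conj:optimal distance trees}.
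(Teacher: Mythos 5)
Your proposal is correct and follows essentially the same route as the paper: run the modified depth-first search, use the invariant $e_{G_r}(A,B_r)=0$ with the expander mixing lemma to get $|B_r|\le(\lambda n/D)^2/|A|$, and stop when $|A|$ is about $t^{1/2}\lambda n D^{-1}$ so that $|A|+\sum_r|B_r|\le 2t^{1/2}\lambda n D^{-1}$ (up to the integrality slack), forcing $|U|\ge\ell$. The only difference is bookkeeping: you stop at $|A|=\lceil t^{1/2}\lambda nD^{-1}\rceil$ and bound the sum directly, while the paper stops at the first step with $|A|<1+t^{1/2}\lambda nD^{-1}$ and bounds each $|B_r|$ individually — the same computation.
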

\begin{proof}
    Run the modified depth-first search algorithm until the first iteration at which \linebreak ${|A| < 1 + \lambda n D^{-1}t^{1/2}}$. 
    Let $r \in [t]$ be arbitrary. Since $e_{G_r}(A,B_r) = 0$, the expander mixing lemma implies that
    \[D n^{-1}|A|\,|B_r| \leq \lambda |A|^{1/2}|B_r|^{1/2}. \]
    Rearranging, we have $|B_r| \leq \lambda n D^{-1}t^{-1/2}$.
    Hence, 
    $\sum_{r \in [t]} |B_r| \leq \lambda n D^{-1}t^{1/2}$,
    which completes the proof. 
\end{proof}

Together with \cref{thm:singleDistanceExpansion}, \cref{thm:arbitraryPaths} immediately yields the following special case of \cref{thm:distance trees}.

\begin{corollary}\label{thm:distance paths}
    Let $t \in \mathbb{N}$ with $0 < t < q$, and let $R \subseteq \F_q^*$ with $|R|=t$.
    If $S \subseteq \F_q^d$, then $S$ contains every $R$-distance path $\mathcal{P}$ on at most $|S|-4t^{1/2}q^{(d+1)/2}$ vertices.
\end{corollary}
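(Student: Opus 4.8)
The plan is to read \cref{thm:distance paths} directly off \cref{thm:arbitraryPaths,thm:singleDistanceExpansion}, by realizing the distances in $R$ as the colour classes of a family of spectral expanders on $\F_q^d$. Fix any bijection $R\leftrightarrow[t]$, and let $\mathcal{G}=\{G_r^d:r\in R\}$ be the family of distance graphs on the common vertex set $V(\mathcal{G})=\F_q^d$. Under this identification an $R$-distance path $\mathcal{P}$ is a $[t]$-coloured path, and for $S\subseteq\F_q^d$ the two notions of containment coincide: an embedding is an injective map, and an edge $uv$ with $f(uv)=r$ is sent to a pair of points of $S$ at distance $r$, that is, to an edge of $G_r^d$ lying inside $S$. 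So it suffices to show that $\mathcal{G}[S]$ contains every $[t]$-coloured path on at most $|S|-4t^{1/2}q^{(d+1)/2}$ vertices.

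Next I would feed in the spectral data. By \cref{thm:singleDistanceExpansion}, each $G_r^d$ is an $(n,D_r,\lambda)$-graph with $n=q^d$, $\lambda=2q^{(d-1)/2}$, and $D_r=q^{d-1}+O(q^{(d-1)/2})$. The one mild point of care is that the degrees $D_r$ need not all be equal, so \cref{thm:arbitraryPaths} does not literally apply with a single value $D$; however, inspecting its short proof, $D$ enters only through the expander-mixing-lemma step $D n^{-1}\sqrt{|A|\,|B_r|}\le\lambda$ applied separately to each colour $r$, where a larger $D_r$ only helps. Hence one may run exactly the same argument using the uniform value $D=q^{d-1}+O(q^{(d-1)/2})$ in place of each $D_r$ (equivalently, observe that for the purposes of that step $\mathcal{G}$ behaves like a family of $(n,D,\lambda)$-graphs), and conclude that $\mathcal{G}[S]$ contains every $[t]$-coloured path on at most $|S|-2\lambda n D^{-1}t^{1/2}$ vertices.

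Finally, substituting $n=q^d$, $\lambda=2q^{(d-1)/2}$, and $D=q^{d-1}+O(q^{(d-1)/2})$ into $2\lambda n D^{-1}t^{1/2}$ is a routine computation yielding the error term $4t^{1/2}q^{(d+1)/2}$, which is exactly the bound in \cref{thm:distance paths}; the hypothesis $0<t<q$ plays no role beyond guaranteeing that $R$ is a legitimate nonempty subset of $\F_q^*$, since $|\F_q^*|=q-1$. I do not anticipate a genuine obstacle: \cref{thm:distance paths} is a direct specialization of \cref{thm:arbitraryPaths}, and the only thing demanding attention is the bookkeeping described above — verifying that a lower bound on the (distance-dependent) regularity $D_r$ is all that \cref{thm:arbitraryPaths} actually uses, and carrying the constant cleanly through the substitution.
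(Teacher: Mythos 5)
Your proposal is exactly the paper's own derivation: \cref{thm:distance paths} is obtained by applying \cref{thm:arbitraryPaths} to the family $\{G_r^d : r \in R\}$ with the spectral parameters $n=q^d$, $\lambda = 2q^{(d-1)/2}$, $D = q^{d-1}+O(q^{(d-1)/2})$ supplied by \cref{thm:singleDistanceExpansion}, and then substituting into the error term $2\lambda n D^{-1}t^{1/2}$. Your extra remark that the $D_r$ may differ across distances and that only a lower bound on each $D_r$ enters the mixing-lemma step is a correct (and slightly more careful) reading of the same argument, so there is nothing further to add.
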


Our second proof relies on a bound on the number of incidences between a set of points and a set of spheres in $\F_q^d$.
A \textit{sphere} with radius $r\in \F_q$ and center at $y\in \F_q^d$ is defined to be the set $\{x:\|x-y\|=r\}$.
The number of incidences between a set $X$ of points and a set $Y$ of spheres is
\[I(X,Y) = \#\{(x,y) \in X \times Y: x \in y\}. \]
The following bound on the number of incidences between a set of points and a set of spheres was proved independently by Cilleruelo, Iosevich, Lund, Roche-Newton, and Rudnev \cite{cilleruelo2014elementary} and  by Phuong, Pham, and Vinh \cite{phuong2017incidences}.
\begin{theorem}[\cite{cilleruelo2014elementary, phuong2017incidences}]\label{thm:generalPointSphereBound}
    Let $X$ be a set of points and $Y$ a set of spheres in $\F_q^d$.
    Then,
    \[\left |I(X,Y) - q^{-1}|X|\,|Y|\right | \leq q^{d/2}|X|^{1/2}|Y|^{1/2}. \]
\end{theorem}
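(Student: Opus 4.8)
The plan is to prove \cref{thm:generalPointSphereBound} by Fourier analysis over $\F_q$, exploiting the fact that the squared distance is a quadratic form whose polarization $\|x-y\| = \|x\| - 2x\cdot y + \|y\|$ separates the point variable from the sphere-center variable. Fix a nontrivial additive character $\chi$ of $\F_q$ and identify each sphere of $Y$ with its center–radius pair $(y,r)\in\F_q^d\times\F_q$. Expanding the indicator of $\|x-y\|=r$ as $q^{-1}\sum_{s\in\F_q}\chi\big(s(\|x-y\|-r)\big)$ and separating off the $s=0$ term, one obtains
\[ I(X,Y) = q^{-1}|X|\,|Y| + q^{-1}\sum_{s\neq 0}T_s, \quad\text{where}\quad T_s := \sum_{x\in X}\sum_{(y,r)\in Y}\chi\big(s(\|x-y\|-r)\big), \]
so it remains to bound $\sum_{s\neq 0}|T_s|$.

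For fixed $s\neq 0$, the polarization identity factors $\chi\big(s(\|x-y\|-r)\big) = \chi(s\|x\|)\cdot\chi(s\|y\|-sr)\cdot\chi(-2s\,x\cdot y)$. Grouping the pairs of $Y$ by center gives $T_s = \sum_{x}\alpha_s(x)\sum_{y}\beta_s(y)\chi(-2s\,x\cdot y)$, where $\alpha_s$ is supported on $X$ with $\alpha_s(x)=\chi(s\|x\|)$, and $\beta_s(y) = \chi(s\|y\|)\sum_{r:(y,r)\in Y}\chi(-sr)$. Applying the Cauchy–Schwarz inequality in the variable $y$, the second factor becomes $\sum_{y}\big|\sum_x\alpha_s(x)\chi(-2s\,x\cdot y)\big|^2$; opening the square and using the orthogonality relation $\sum_{y\in\F_q^d}\chi\big(2s(x-x')\cdot y\big)$, which equals $q^d$ if $x=x'$ and $0$ otherwise — valid precisely because $2s\neq 0$, which is the only place the hypothesis that $q$ is odd enters — this factor collapses to $q^d|X|$. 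Hence $|T_s| \leq q^{d/2}|X|^{1/2}\big(\sum_y|\beta_s(y)|^2\big)^{1/2}$.

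Finally, Cauchy–Schwarz over the $q-1$ nonzero values of $s$ gives $\sum_{s\neq 0}|T_s| \leq q^{d/2}|X|^{1/2}(q-1)^{1/2}\big(\sum_{s\neq 0}\sum_y|\beta_s(y)|^2\big)^{1/2}$, and orthogonality in the radius variable yields $\sum_{s\in\F_q}|\beta_s(y)|^2 = q\,m_y$ with $m_y=\#\{r:(y,r)\in Y\}$, hence $\sum_{s\neq 0}\sum_y|\beta_s(y)|^2 = \sum_y(q\,m_y-m_y^2)\leq q|Y|$. Combining these estimates, $q^{-1}\sum_{s\neq 0}|T_s| \leq q^{-1}q^{d/2}|X|^{1/2}\big(q(q-1)\big)^{1/2}|Y|^{1/2} \leq q^{d/2}|X|^{1/2}|Y|^{1/2}$, which is exactly the claimed inequality.

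The computation is routine; the one step requiring a little care is the bookkeeping of the multiplicities $m_y$, since many spheres of $Y$ may share a center and a naive estimate would be wasteful here — Parseval in the radius variable is what produces the sharp constant. A more geometric alternative would be to lift each point $x$ to $(x,\|x\|)\in\F_q^{d+1}$ and observe that $x$ lies on the sphere of center $y$ and radius $r$ if and only if this lift lies on the hyperplane $\{u : u_{d+1} - 2y\cdot(u_1,\ldots,u_d) = r-\|y\|\}$; this is a bijection between spheres and non-vertical hyperplanes, so one may instead invoke a point–hyperplane incidence bound in $\F_q^{d+1}$, itself derivable from the expander mixing lemma applied to a suitable Cayley graph.
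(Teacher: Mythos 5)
Your argument is correct. Note that the paper does not prove \cref{thm:generalPointSphereBound} at all --- it is quoted from \cite{cilleruelo2014elementary, phuong2017incidences} and used as a black box in the proof of \cref{thm:specialPaths} --- so there is no internal proof to compare against; your write-up is in the spirit of the cited sources (Phuong--Pham--Vinh argue via the spectrum of the point--sphere incidence graph, which is equivalent to your character-sum computation, while Cilleruelo--Iosevich--Lund--Roche-Newton--Rudnev give a purely elementary second-moment count). Checking the details: the orthogonality expansion of the indicator, the polarization $\|x-y\|=\|x\|-2x\cdot y+\|y\|$, the collapse of $\sum_{y\in\F_q^d}\bigl|\sum_x\alpha_s(x)\chi(-2sx\cdot y)\bigr|^2$ to $q^d|X|$ (legitimate since $2s\neq 0$ in odd characteristic), the Parseval identity $\sum_{s\in\F_q}|\beta_s(y)|^2=qm_y$, and the final bound $q^{-1}\sqrt{q(q-1)}\leq 1$ all check out, and the constant you obtain is exactly the claimed one (even with a factor $\sqrt{1-1/q}$ to spare). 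Two small points of hygiene: you should say explicitly that $\beta_s$ is defined on all of $\F_q^d$ (vanishing off the set of centers), so that the Cauchy--Schwarz and orthogonality steps run over the full space, and that identifying spheres with center--radius pairs means fixing one representative pair per sphere of $Y$, so that for fixed $y$ the radii counted by $m_y$ are distinct; both are exactly as you implicitly use them. Your closing remark about lifting points to $(x,\|x\|)\in\F_q^{d+1}$ and reducing to a point--hyperplane incidence bound is also a standard and valid alternative route.
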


Instead of using \cref{thm:singleDistanceExpansion}, we can apply \cref{thm:generalPointSphereBound} to obtain an alternative proof of the following special case of \cref{thm:distance paths}.

\begin{theorem}\label{thm:specialPaths}
    If $S \subseteq \F_q^d$, then $S$ contains every distance path $\mathcal{P}$ with at most ${|S| - 2q^{(d+2)/2}}$ vertices. 
\end{theorem}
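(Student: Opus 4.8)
The plan is to run the same modified depth-first search algorithm from the start of this section, applied to the single family $\mathcal{G}=\{G_r^d : r\in\F_q^*\}$ so that $t=q-1$, but to replace the use of the Expander Mixing Lemma at the final step with the point-sphere incidence bound of \cref{thm:generalPointSphereBound}. Concretely, take $S\subseteq\F_q^d$, initialize $A=S$ and $U,B_1,\ldots,B_t$ empty (indexing the $B$'s by the distances $r\in\F_q^*$), and run the algorithm until the first iteration at which $|A|<q^{(d+2)/2}$. As recorded above, at that moment $e_{G_r}(A,B_r)=0$ for every $r$, meaning no point of $A$ lies on any sphere of radius $r$ centered at a point of $B_r$; and $\mathcal{G}[U]$ contains a copy of the colored path induced by the first $|U|$ vertices of any prescribed distance path $\mathcal{P}$.

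The key computation is to bound $\sum_r |B_r|$ using incidences. Let $X=A$ be our point set and let $Y=\bigcup_{r}\{\text{sphere of radius }r\text{ centered at }y : y\in B_r\}$ be the multiset of spheres; then $|Y|=\sum_r |B_r|$ and $I(X,Y)=0$ because no $(x,y)\in A\times B_r$ has $\|x-y\|=r$. \cref{thm:generalPointSphereBound} then gives
\[
q^{-1}|X|\,|Y| \le q^{d/2}|X|^{1/2}|Y|^{1/2},
\]
hence $|Y|\le q^{d+2}|X|^{-1}$. Since at the stopping iteration $|X|=|A|\ge q^{(d+2)/2}$ minus at most one (we cross the threshold by moving a single vertex, so $|A|\ge q^{(d+2)/2}-1$; absorb the harmless $-1$ into constants, or stop one step earlier), we get $\sum_r |B_r|=|Y|\le q^{(d+2)/2}$. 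Therefore $|U|\ge |S|-|A|-\sum_r|B_r|\ge |S|-2q^{(d+2)/2}$ at that iteration, and since $|U|$ only grows until it starts shrinking, the algorithm does pass through a state with $|U|$ at least $|S|-2q^{(d+2)/2}$; using property (1) of the algorithm, $\mathcal{G}[S]$ contains every distance path on at most that many vertices, which is exactly the claim.

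The main point requiring care is the translation of the "$e_{G_r}(A,B_r)=0$ for all $r$ simultaneously" condition into a single incidence statement: one must collect all the spheres of every radius into one family $Y$ and observe that $I(A,Y)$ is literally zero, rather than applying the bound $q-1$ separate times (which would lose a factor and defeat the purpose). One should also double-check the multiplicity bookkeeping — distinct $(y,r)$ pairs give distinct spheres, so $|Y|=\sum_r|B_r|$ with no collapsing — and handle the off-by-one at the stopping condition, neither of which presents a genuine obstacle. The substantive content is entirely in \cref{thm:generalPointSphereBound}; the rest is the bookkeeping already set up in this section.
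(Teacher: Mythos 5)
Your proposal is correct and is essentially the paper's own proof: run the modified depth-first search, stop when $|A|$ first drops to about $q^{(d+2)/2}$, collect all spheres of radius $r$ centered in $B_r$ into one family $\mathcal{C}$ with $I(A,\mathcal{C})=0$, and apply \cref{thm:generalPointSphereBound} once to get $\sum_r |B_r| \leq q^{(d+2)/2}$, hence $|U| \geq |S| - 2q^{(d+2)/2}$. The only differences are cosmetic (the paper uses the threshold $|A| < 1 + q^{(d+2)/2}$, which handles your off-by-one cleanly), so no changes are needed.
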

\begin{proof}
    Let $R:=\F_q^*$.
    Run the modified depth-first search algorithm until the first time when $|A| < 1 + q^{(d+2)/2}$.
    For each $r \in R$, let $C_r$ be the set of spheres of radius $r$ and centers in $B_r$, and let $\mathcal{C} = \bigcup_{r \in R}C_r$.
    Note that $I(A,\mathcal{C}) = 0$.
    Hence, \cref{thm:generalPointSphereBound} implies
    \[q^{-1}|A|\,|\mathcal{C}| \leq q^{d/2}|A|^{1/2}|\mathcal{C}|^{1/2}, \]
    and hence $\sum_{r \in R} |B_r| = |\mathcal{C}| \leq q^{(d+2)/2}$.
    This completes the proof.
\end{proof}

%%%%%%%%%%%%%%%%%%%%%%%%%%%%
\section{Colorful version of Haxell's result}\label{sec:haxell colored}

Our proof of \cref{thm:haxell colored} modifies and extends the arguments in \cite{draganic2022rolling,friedman1987expanding,haxell2001tree}.
\colorHaxell*
\begin{proof}%[Proof of \cref{thm:haxell colored}]
For every $X\subseteq V(\mathcal{G})\times [t]$, denote $\Gamma(X)=\Gamma_{\mathcal{G}}(X)$. 
For a set $X\subseteq V(\mathcal{G})\times [t]$, an edge-colored graph $\mathcal{H}$ with $\Delta_r(\mathcal{H})\le \Delta$ for each $r\in [t]$, and an embedding $\phi:\mathcal{H} \hookrightarrow \mathcal{G}$, define 
\[
R(X,\phi) = |\Gamma(X)\setminus \phi(\mathcal{H})| - \sum_{(v,r)\in X} (\Delta - D_{\mathcal{H},r}(\phi^{-1}(v))), 
\]
where we abuse the notation to mean $D_{\mathcal{H},r}(\phi^{-1}(v))=0$ when $v\notin \phi(\mathcal{H})$. 
An embedding $\phi:\mathcal{H} \hookrightarrow \mathcal{G}$ is called \textit{$s$-good} if, for each $X \subseteq V(\mathcal{G})\times [t]$ with $|X| \leq s$, we have $R(X,\phi)\ge 0$.
We will use the following.
\begin{fact}
    Let $\mathcal{H}$ be an edge-colored graph with at most $k-1$ vertices and $\Delta_r(\mathcal{H})\le \Delta$ for every $r\in [t]$.  
    Suppose $\phi:\mathcal{H} \hookrightarrow \mathcal{G}$ is a $2m$-good embedding.
    Then, we have the following.
    \stepcounter{propcounter}
    \begin{enumerate}[leftmargin=*,label = {\bfseries \emph{\Alph{propcounter}\arabic{enumi}}}]
    \item \label{item:small sets} If $X\subseteq V(\mathcal{G})\times [t]$ with $|X|\le 2m$ satisfies $R(X,\phi)=0$, then $|X|\le m$. 
    \item \label{item:submodularity} Every $X_1,X_2\subseteq V(\mathcal{G})\times [t]$ satisfy $R(X_1\cup X_2,\phi) + R(X_1\cap X_2,\phi) \le R(X_1,\phi) + R(X_2,\phi)$.     
    \item \label{item:turn to union} If $X_1,X_2\subseteq V(\mathcal{G})\times [t]$ with $|X_1|,|X_2|\le m$ satisfy $R(X_1,\phi) = R(X_2,\phi) = 0$. Then, $R(X_1\cup X_2,\phi) = 0$ and $|X_1\cup X_2|\le m$. 
    \end{enumerate}
\end{fact}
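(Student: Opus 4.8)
The plan is to establish the three parts in the order \ref{item:small sets}, \ref{item:submodularity}, \ref{item:turn to union}, since the last part will be deduced from the first two. Notably, \ref{item:submodularity} needs no hypothesis at all, and \ref{item:small sets} uses only condition~\ref{condition:large sets} of the theorem together with the bound $|V(\mathcal{H})|\le k-1$; the $2m$-goodness of $\phi$ enters only in \ref{item:turn to union}. Throughout, I would write $R(X,\phi)=F(X)-g(X)$ where $F(X)=|\Gamma(X)\setminus\phi(\mathcal{H})|$ and $g(X)=\sum_{(v,r)\in X}(\Delta-D_{\mathcal{H},r}(\phi^{-1}(v)))$, noting that each summand of $g$ lies in $[0,\Delta]$ because $D_{\mathcal{H},r}(\phi^{-1}(v))\le\Delta_r(\mathcal{H})\le\Delta$ (and the convention $D_{\mathcal{H},r}(\phi^{-1}(v))=0$ for $v\notin\phi(\mathcal{H})$ just makes that summand equal to $\Delta$, which causes no trouble anywhere).

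For \ref{item:small sets}, I would argue by contradiction. Suppose $m<|X|\le 2m$ and $R(X,\phi)=0$. Then $F(X)=g(X)\le\Delta|X|$, so $|\Gamma(X)|\le F(X)+|\phi(\mathcal{H})|\le \Delta|X|+(k-1)<\Delta|X|+k$, which contradicts condition~\ref{condition:large sets} applied to $X$. Hence $|X|\le m$. For \ref{item:submodularity}, observe that $g$ is modular, being a sum over the elements of $X$, so $g(X_1\cup X_2)+g(X_1\cap X_2)=g(X_1)+g(X_2)$; it therefore suffices to show $F$ is submodular. This is immediate from the fact that $\Gamma$ distributes over unions: $\Gamma(X_1\cup X_2)=\Gamma(X_1)\cup\Gamma(X_2)$ and $\Gamma(X_1\cap X_2)\subseteq\Gamma(X_1)\cap\Gamma(X_2)$, relations that survive intersecting with $V(\mathcal{G})\setminus\phi(\mathcal{H})$; then $|P\cup Q|+|P\cap Q|=|P|+|Q|$ together with monotonicity of cardinality gives $F(X_1\cup X_2)+F(X_1\cap X_2)\le F(X_1)+F(X_2)$. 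Subtracting the modular identity for $g$ yields \ref{item:submodularity}.

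For \ref{item:turn to union}, apply \ref{item:submodularity} to $X_1,X_2$ to get $R(X_1\cup X_2,\phi)+R(X_1\cap X_2,\phi)\le R(X_1,\phi)+R(X_2,\phi)=0$. Since $|X_1\cap X_2|\le m\le 2m$ and $|X_1\cup X_2|\le|X_1|+|X_2|\le 2m$, the $2m$-goodness of $\phi$ forces $R(X_1\cap X_2,\phi)\ge 0$ and $R(X_1\cup X_2,\phi)\ge 0$; combined with the previous inequality, both must vanish. In particular $R(X_1\cup X_2,\phi)=0$ with $|X_1\cup X_2|\le 2m$, so \ref{item:small sets} gives $|X_1\cup X_2|\le m$, completing the proof.

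I do not expect a genuine obstacle here: all the content is in the decomposition $R=F-g$ and the observation that $F$ is submodular while $g$ is modular. The only points requiring care are checking that each term of $g$ stays in $[0,\Delta]$ (so the convention at vertices outside $\phi(\mathcal{H})$ is harmless), and verifying that $|X_1\cup X_2|$ and $|X_1\cap X_2|$ fall in the ranges where condition~\ref{condition:large sets} and $2m$-goodness, respectively, can be invoked.
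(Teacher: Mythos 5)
Your proof is correct and follows essentially the same route as the paper: the same counting for the first part (bounding $|\Gamma(X)|$ by $\Delta|X|+k-1$ via $|\phi(\mathcal{H})|\le k-1$ and invoking the second expansion hypothesis), the same submodularity-of-$|\Gamma(\cdot)\setminus\phi(\mathcal{H})|$ observation for the second, and the same combination of the two with $2m$-goodness for the third. Your explicit $R=F-g$ decomposition and the remark that goodness is only needed in the last part are just slightly more detailed packaging of the paper's argument.
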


\begin{proof}
Let $X\subseteq V(\mathcal{G})\times [t]$ with $|X|\le 2m$ and $R(X,\phi)=0$. Since $\mathcal{H}$ has at most $k-1$ vertices, $|\Gamma(X)\setminus \phi(\mathcal{H})|\ge |\Gamma(X)|-k+1$. Clearly, ${\sum_{(v,r)\in X} (\Delta - D_{\mathcal{H},r}(\phi^{-1}(v)))\le \Delta |X|}$. Since $R(X,\phi)=0$, we have $|\Gamma(X)|\le \Delta |X| + k -1$. Thus, by \eqref{condition:large sets}, $|X|\le m$. This proves~\ref{item:small sets}.

Let $X_1,X_2\subseteq V(\mathcal{G})\times [t]$. The inequality in \ref{item:submodularity} follows from the following observation.
\[
|\Gamma(X_1\cup X_2)\setminus \phi(\mathcal{H})| + |\Gamma(X_1\cap X_2)\setminus \phi(\mathcal{H})| \le |\Gamma(X_1)\setminus \phi(\mathcal{H})| + |\Gamma(X_2)\setminus \phi(\mathcal{H})|.
\]

Let $X_1,X_2\subseteq V(\mathcal{G})\times [t]$ with $|X_1|,|X_2|\le m$ and $R(X_1,\phi) = R(X_2,\phi) = 0$. Since $\phi$ is $2m$-good, $R(X_1\cup X_2,\phi), R(X_1\cap X_2,\phi)\ge 0$. Thus, by \ref{item:submodularity}, it holds that $R(X_1\cup X_2,\phi) = 0$. Since $|X_1\cup X_2|\le 2m$, by \ref{item:small sets}, we have $|X_1\cup X_2|\le m$. This proves \ref{item:turn to union}.
\end{proof}

To prove \cref{thm:haxell colored}, we show the stronger statement that for every $[t]$-colored tree $\mathcal{T}$ with at most $k$ vertices and $\Delta_r(\mathcal{T})\le \Delta$ for each $r\in [t]$, there is a $2m$-good embedding of $\mathcal{T}$ in $\mathcal{G}$.
We use induction on the number of vertices of $\mathcal{T}$.
The base case is easy to verify, i.e., to show that we can have a $2m$-good embedding of a single vertex in $\mathcal{G}$. 
Indeed, for any embedding $\phi$ of a single vertex tree to $\mathcal{G}$, consider $X\subseteq V(\mathcal{G})\times [t]$ with $|X|\le 2m$, then $R(X,\phi) \ge (\Delta |X| + 1 - 1) - \Delta |X| = 0$.
The induction step can be done by applying the following lemma.
\begin{lemma}\label{lem:single step}
    Let $H$ be a graph with $|V(H)|\le k-1$. Let $f:E(H)\rightarrow [t]$ be an edge-coloring such that
    the corresponding edge-colored graph $\mathcal{H}$ satisfies $\Delta_r(\mathcal{H})\le \Delta$ for every $r\in [t]$. Suppose $\phi:\mathcal{H} \hookrightarrow \mathcal{G}$ is a $2m$-good embedding. Let $H_*$ be a graph that can be obtained by adding a vertex of degree $1$ to $H$. Let $f_*:E(H_*)\rightarrow [t]$ be an edge-coloring of $H_*$ such that $f_*$ restricted to the graph $H$ is the same as $f$, and the corresponding edge-colored graph $\mathcal{H}_*$ satisfies $\Delta_r(\mathcal{H}_*)\le \Delta$ for each $r \in [t]$. Then, there is a $2m$-good embedding $\phi_*:\mathcal{H}_* \hookrightarrow \mathcal{G}$ that extends $\phi$.
\end{lemma}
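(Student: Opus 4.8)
The plan is to extend the embedding $\phi$ one vertex at a time, following the Friedman--Pippenger/Haxell strategy adapted to the colored setting. Say the new vertex $x$ of $H_*$ is attached to $y \in V(H)$ by an edge of color $r_0 = f_*(xy)$. We want to pick a vertex $w \in \Gamma_{G_{r_0}}(\phi(y)) \setminus \phi(\mathcal{H})$ and set $\phi_*(x) = w$, and we must do so in a way that keeps the extended map $2m$-good. So the task reduces to: (i) show such a $w$ exists at all, and (ii) show that among the candidate choices of $w$ there is at least one for which no set $X$ with $|X| \le 2m$ has $R(X, \phi_*) < 0$.

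For existence, I would argue as follows. Suppose for contradiction that every neighbor of $\phi(y)$ in $G_{r_0}$ already lies in $\phi(\mathcal{H})$. Consider the singleton $X_0 = \{(y, r_0)\}$ — more precisely, $\{(\phi(y), r_0)\}$ viewed in $V(\mathcal{G}) \times [t]$. Then $\Gamma(X_0) \setminus \phi(\mathcal{H}) = \emptyset$, so $R(X_0, \phi) = -(\Delta - D_{\mathcal{H}, r_0}(y))$. Since $\Delta_{r_0}(\mathcal{H}) \le \Delta$ and — crucially — $\mathcal{H}_*$ still has maximum $r_0$-degree at most $\Delta$, the vertex $y$ has $r_0$-degree strictly less than $\Delta$ in $\mathcal{H}$ (it must have room for the new edge to $x$). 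Hence $\Delta - D_{\mathcal{H}, r_0}(y) \ge 1$, giving $R(X_0, \phi) \le -1 < 0$, contradicting that $\phi$ is $2m$-good. So a valid target vertex $w$ exists.

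For the harder part — choosing $w$ so that $2m$-goodness is preserved — I would track how $R(X, \phi_*)$ differs from $R(X, \phi)$. Setting $\phi_*(x) = w$ changes two things: $w$ leaves the set $\Gamma(X) \setminus \phi(\mathcal{H})$ whenever $w \in \Gamma(X)$, and the term $\Delta - D_{\mathcal{H}_*, r}(\phi^{-1}(v))$ for $(v,r) = (\phi(y), r_0)$ decreases by $1$ (while possibly introducing a new term $(w, r)$, but since $x$ is a leaf, $D_{\mathcal{H}_*, r}(x) = 0$ for $r \ne r_0$ and... actually $x$ has $r_0$-degree $1$, so a term $(w, r_0)$ contributes $\Delta - 1 \ge 1 \ge 0$, harmless). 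The only way $R$ can drop below $0$ is for sets $X$ with $w \in \Gamma(X)$: such $X$ lose $1$ from $|\Gamma(X) \setminus \phi_*(\mathcal{H}_*)|$ compared to before. The standard argument is: if $\phi_*$ were \emph{not} $2m$-good for every valid choice of $w$, then for each such $w$ there is a ``bad'' set $X_w$ with $|X_w| \le 2m$, $R(X_w, \phi) = 0$ (it was exactly tight before, and dropping by one made it negative — and $|X_w| \le 2m$, so in fact $|X_w| \le m$ by \ref{item:small sets}), and $w \in \Gamma(X_w)$. Now take the union $X^* = \bigcup_w X_w$ over all valid targets $w \in \Gamma_{G_{r_0}}(\phi(y)) \setminus \phi(\mathcal{H})$. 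By repeated application of \ref{item:turn to union}, $R(X^*, \phi) = 0$ and $|X^*| \le m$. But $\Gamma(X^*)$ contains every valid $w$, i.e. it contains $\Gamma_{G_{r_0}}(\phi(y)) \setminus \phi(\mathcal{H})$; together with the non-valid neighbors, which lie in $\phi(\mathcal{H})$, we get control on $\Gamma(X^* \cup X_0)$ where $X_0 = \{(\phi(y), r_0)\}$. Since $R(X_0, \phi) \le 0$ but $\phi$ is $2m$-good forces $R(X_0, \phi) \ge 0$, so $R(X_0, \phi) = 0$; then \ref{item:turn to union} gives $R(X^* \cup X_0, \phi) = 0$ and $|X^* \cup X_0| \le m$. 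Finally I would derive a contradiction by a direct degree/counting estimate: adding $x$ and choosing $\phi_*(x) = w$ reduces the relevant quantity, and one checks $R(X^* \cup X_0, \phi_*) \ge 0$ must actually hold because $w$ was chosen \emph{among} $\Gamma(X^* \cup X_0)$ yet the $(\phi(y), r_0)$-slack absorbs the loss — more carefully, examining $R(X^* \cup X_0, \phi)$ and using that $\Gamma(X^* \cup X_0) \supseteq \Gamma_{G_{r_0}}(\phi(y))$ so that $w \in \phi(\mathcal{H})$ after all, contradicting the choice of $w$ as a valid (non-image) target.

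The main obstacle I anticipate is bookkeeping the interaction between the two sources of change in $R$ — the removal of $w$ from the ``free neighborhood'' and the decrement of the slack at $(\phi(y), r_0)$ — and making sure the union-of-bad-sets argument is set up so that \emph{exactly} the set of valid targets $w$ gets covered by $\Gamma(X^*)$. The colored structure does not genuinely complicate this beyond insisting everywhere that the relevant neighborhoods $\Gamma_{G_{r_0}}$ are taken in the single color $r_0$, and that the $r$-degree conditions $\Delta_r(\mathcal{H}_*) \le \Delta$ are used (rather than a single global degree bound) to guarantee slack at $y$ in color $r_0$. Once the fact's properties \ref{item:small sets}--\ref{item:turn to union} are in hand, the argument is essentially the uncolored Haxell argument run in the fixed color $r_0$, and I would present it that way.
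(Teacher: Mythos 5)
Your setup matches the paper's argument: extend by the leaf in its color $r_0$, suppose every candidate $w\in W:=\Gamma_{G_{r_0}}(\phi(y))\setminus\phi(\mathcal{H})$ admits a bad set $X_w$, extract from the change in $R$ that $R(X_w,\phi)=0$ and $w\in\Gamma(X_w)$, take $X^*=\bigcup_w X_w$ via \ref{item:turn to union}, and finally bring in the pair $(\phi(y),r_0)$. But the closing step, which is where the contradiction must come from, does not work as you wrote it. You claim $R(X_0,\phi)\le 0$ for $X_0=\{(\phi(y),r_0)\}$ and conclude $R(X_0,\phi)=0$ so that \ref{item:turn to union} applies to $X^*\cup X_0$. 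There is no reason for $R(X_0,\phi)\le 0$: by definition $R(X_0,\phi)=|W|-(\Delta-D_{\mathcal{H},r_0}(y))$, which is typically large and positive (indeed, your own existence argument shows it is negative precisely when $W=\emptyset$). So \ref{item:turn to union} cannot be invoked with $X_0$, and the subsequent deduction (``$\Gamma(X^*\cup X_0)\supseteq\Gamma_{G_{r_0}}(\phi(y))$ so $w\in\phi(\mathcal{H})$ after all'') is a non sequitur: membership of $w$ in $\Gamma(X^*\cup X_0)$ says nothing about $w$ lying in the image of $\phi$.

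The correct ending, which is what the paper does, is a direct computation rather than another application of \ref{item:turn to union}: set $X_*:=X^*\cup\{(\phi(y),r_0)\}$. Since every $w\in W$ satisfies $w\in\Gamma(X_w)\subseteq\Gamma(X^*)$, adding $(\phi(y),r_0)$ contributes no new vertices to $\Gamma(\cdot)\setminus\phi(\mathcal{H})$, i.e.\ $|\Gamma(X_*)\setminus\phi(\mathcal{H})|=|\Gamma(X^*)\setminus\phi(\mathcal{H})|$, while the subtracted sum gains the term $\Delta-D_{\mathcal{H},r_0}(y)\ge 1$, provided $(\phi(y),r_0)\notin X^*$. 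Hence $R(X_*,\phi)=R(X^*,\phi)-(\Delta-D_{\mathcal{H},r_0}(y))<0$ with $|X_*|\le m+1\le 2m$, contradicting $2m$-goodness of $\phi$. Note that this needs the fact, which you never record, that each bad set satisfies $(\phi(y),r_0)\notin X_w$ (and likewise $(w,r_0)\notin X_w$): the exact identity is $R(X_w,\phi)-R(X_w,\phi_w)=\mathds{1}[w\in\Gamma(X_w)]-\mathds{1}[(w,r_0)\in X_w]-\mathds{1}[(\phi(y),r_0)\in X_w]$, and badness forces all four conclusions ($R(X_w,\phi)=0$, $w\in\Gamma(X_w)$, and both memberships failing), not just the two you kept. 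Without $(\phi(y),r_0)\notin X^*$ the decrement by the slack at $(\phi(y),r_0)$ is not available, so this omission is not cosmetic. With these two repairs your argument becomes the paper's proof.
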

\begin{proof}
Suppose the graph $H_*$ is obtained by adding a leaf $v$ to a vertex $u$ in $H$. 
Without loss of generality, assume that $f_*(uv)=1$, i.e., the color of the new edge $uv$ is $1$, and thus, $D_{\mathcal{H},1}(u)\le \Delta -1$. 
Let $W:=\Gamma_{G_1}(\phi(u))\setminus \phi(\mathcal{H})$. For every $w\in W$, we can extend the embedding $\phi$ to $\phi_w:\mathcal{H}_* \hookrightarrow \mathcal{G}$ by letting $\phi_w(v)=w$.
Thus, it is enough to show that there is $w\in W$ such that $\phi_w$ is $2m$-good.
For the sake of contradiction, suppose that for every $w\in W$, there exist $X_w\subseteq V(\mathcal{G})\times [t]$ with $|X_w|\le 2m$ such that $R(X_w,\phi_w)< 0$. 
Then,
\[
|\Gamma(X_w)\setminus \phi(\mathcal{H})| - |\Gamma(X_w)\setminus (\phi(\mathcal{H})\cup \{w\})| = \mathds{1}[w\in \Gamma(X_w)], \text{ and}
\]
\begin{align*}
\sum_{(x,r)\in X_w} \left(\Delta - D_{\mathcal{H},r}(\phi^{-1}(x))\right) - &\sum_{(x,r)\in X_w} \left(\Delta - D_{{\mathcal{H}_*,r}}(\phi_w^{-1}(x))\right) \\ 
&= \sum_{(x,r) \in X_w} \left( D_{\mathcal{H}_*,r}(\phi_w^{-1}(x)) - D_{\mathcal{H},r}(\phi^{-1}(x)) \right) \\
&= \mathds{1}[(w,1)\in X_w] + \mathds{1}[(\phi(u),1) \in X_w].
\end{align*}
Since $\phi$ is $2m$-good, for every $w\in W$, we have $R(X_w,\phi)\ge 0$. From the above two equalities and the assumption that $R(X_w,\phi_w)<0$, we have
\[
0 < R(X_w,\phi) - R(X_w,\phi_w) = \mathds{1}[w\in \Gamma(X_w)] - \mathds{1}[(w,1)\in X_w] - \mathds{1}[(\phi(u),1) \in X_w].
\]
Thus, for every $w\in W$, we must have $R(X_w,\phi) = 0$, $w\in \Gamma(X_w)$, $(w,1)\notin X_w$, and $(\phi(u),1) \notin X_w$. 

We next show that the set $X:=\bigcup_{w\in W}X_w$ has size at most $m$ and $R(X,\phi) = 0$. Indeed, this is trivial if $W=\emptyset$; otherwise, we can apply induction and \ref{item:turn to union} to deduce it. Next, consider the set $X_* := X\cup \{(\phi(u),1)\}$. Since $w\in \Gamma(X_w)$ for every $w\in W$, we have $W=\Gamma_{G_1}(\phi(u))\setminus \phi(\mathcal{H}) \subseteq \Gamma(X)$. Hence, 
\[
|\Gamma(X)\setminus \phi(\mathcal{H})| = |\Gamma(X_*)\setminus \phi(\mathcal{H})|.
\]
Since $(\phi(u),1) \notin X_w$ for all $w\in W$, we have $(\phi(u),1) \notin \bigcup_{w\in W} X_w = X$. This fact together with the above equation implies that 
\[
R(X_*,\phi)= R(X,\phi) - (\Delta - D_{\mathcal{H},1}(u)) <0,
\]
where the last step uses $R(X,\phi)=0$ and $D_{\mathcal{H},1}(u)\le \Delta -1$.
Since $|X_*| \le m+1 \le 2m$, the fact $R(X_*,\phi) < 0$ contradicts the assumption that $\phi$ is $2m$-good.
This proves \cref{lem:single step}.
\end{proof}
This finishes the proof of \cref{thm:haxell colored}.
\end{proof}

%%%%%%%%%%%%%%%%%%%%%%%%%%%%
\section{Induced subgraphs of {\boldmath $(n,D,\lambda)$}-graphs}\label{sec:induced subgraphs}

In this section, we prove \cref{thm:subgraphsWithLargeMinDegreeExist}, \cref{thm:coloredTreesInInducedSubgraphs}, and \cref{thm:single distance trees}.
The proof of \cref{thm:coloredTreesInInducedSubgraphs} depends on \cref{thm:haxell colored} and \cref{thm:subgraphsWithLargeMinDegreeExist}.
\cref{thm:single distance trees} follows easily from \cref{thm:subgraphsWithLargeMinDegreeExist} and a recent theorem of Han and Yang \cite{han2022spanning}.
\cref{thm:subgraphsWithLargeMinDegreeExist} is of central importance in this section, and we restate it for convenience.
\largeMinDegree*

\begin{proof}
    Let $X_0 := \emptyset$. For $i\ge 1$, inductively construct the sets $X_i$ in the following way: If there is a vertex $v_i\in S\setminus X_{i-1}$ such that there exists $r\in [t]$ with $e_{G_r}(\{v_i\}, S \setminus X_{i-1}) < 4^{-1}C\lambda$, then $X_i := X_{i-1} \cup \{v_i\}$, otherwise, $X_i := X_{i-1}$.
    Let $j := 8tC^{-2}|S|$, and let $Y := X_j$. If $|Y|<j$, then there is nothing to prove since $W$ can be taken as $S\setminus Y$. Thus, we can assume that $|Y|=j$.
    Note that for every vertex $v\in Y$, there exists $r\in [t]$ such that ${e_{G_r}(\{v\}, S \setminus Y) < 4^{-1}C\lambda}$ for each $r \in [t]$.
    
    Let $r \in [t]$ be such that there is $Y_r\subseteq Y$ with $|Y_r| \geq |Y|/t = 8C^{-2}|S|$ and for every $v\in Y_r$, we have ${e_{G_r}(\{v\}, S \setminus Y) < 4^{-1}C\lambda}$.
    Thus, we have ${e_{G_r}(Y_r, S \setminus Y) < 4^{-1}C\lambda |Y_r|}$.
    Note that $|S \setminus Y| = (1-8tC^{-2})|S|$.
    By the expander mixing lemma and the assumption that $C\ge 4t^{1/2}$,
    \begin{align*}
    4^{-1}C\lambda|Y_r| &> e_{G_r}(Y_r,S \setminus Y) \geq \frac{D}{n}|Y_r|\,|S \setminus Y| - \lambda \sqrt{|Y_r|\,|S\setminus Y|}, \text{ hence} \\
    4^{-1}C &> (1-8tC^{-2})C - (1-8tC^{-2})^{1/2}(8C^{-2})^{-1/2}, \text{ and so}\\
    4^{-1} &> (1-8tC^{-2})^{1/2} \left((1-8tC^{-2})^{1/2} - 8^{-1/2}\right) \ge 4^{-1},
    \end{align*}
    which is a contradiction.
    This finishes the proof of \cref{thm:subgraphsWithLargeMinDegreeExist}.
\end{proof}

We next use \cref{thm:haxell colored,thm:subgraphsWithLargeMinDegreeExist} to prove \cref{thm:coloredTreesInInducedSubgraphs}.

\applicationHaxell*
\begin{proof}
    Denote $C = |S|\left(\frac{n \lambda}{D} \right)^{-1}$.
    Since the conclusion of the theorem is trivial if $C \leq 10 (t \Delta)^{1/2}$, we assume that $C > 10 (t \Delta)^{1/2}$.
    Thus, we can apply \cref{thm:subgraphsWithLargeMinDegreeExist} to find a subset $W \subseteq S$ such that $|W|\ge (1-8tC^{-2})|S|$ and $\delta(G_r[W]) \geq 4^{-1}C\lambda$ for each $r \in [t]$.
    We will show that $\mathcal{G}[W]$ satisfies the hypotheses of \cref{thm:haxell colored} with $k=|S|-10(t\Delta)^{1/2}\frac{n \lambda}{D}$, and $m=t^{1/2}\Delta^{-1/2}\frac{n\lambda}{D}$.
    
    For $X \subseteq W \times [t]$ and $r \in [t]$, denote $X_r = \{v \in W: (v,r) \in X\}$.
    To apply \cref{thm:haxell colored}, it will be sufficient to show the following for every $r\in [t]$:
    \begin{enumerate}
        \item\label{eq:expansion of small set} for $Z \subseteq W$ with $1 \leq |Z| \leq t^{-1}m$, we have $|\Gamma_{G_r[W]}(Z)| > t\Delta|Z|$, and
        \item\label{eq:expansion of big set} for $Z \subseteq W$ with $t^{-1}m < |Z| \leq 2t^{-1}m$, we have $|\Gamma_{G_r[W]}(Z)| > t\Delta|Z| + k$.
    \end{enumerate}
    Indeed, for any $X \subseteq W \times [t]$, there is some $r \in [t]$ such that $|X_r| \geq t^{-1}|X|$. Thus, we can take a subset $Z\subseteq X_r$ such that $|Z|=t^{-1}|X|$.
    Since $|\Gamma_\mathcal{G[W]}(X)| \geq |\Gamma_{G_r[W]}(Z)|$, the above conditions immediately imply the first and second numbered hypotheses of \cref{thm:haxell colored}, respectively. 

    We start with showing \eqref{eq:expansion of small set}.
    Assume for contradiction that there is some $Z\subseteq W$ with $1 \leq |Z| \leq t^{-1}m$ such that $|\Gamma_{G_r[W]}(Z)| \leq t \Delta |Z|$ for some $r\in [t]$.
    With the previously established bound $\delta(G_r[W]) \geq 4^{-1}C\lambda$, the expander mixing lemma, and the assumption that $|Z| \leq t^{-1}m = t^{-1/2}\Delta^{-1/2}\frac{n\lambda}{D}$, this implies
    \begin{align*}
    4^{-1}C\lambda |Z| \leq e(Z,\Gamma_{G_r[W]}(Z)) 
    &\leq \frac{D}{n}|Z|\,|\Gamma_{G_r[W]}(Z)| + \lambda \sqrt{|Z|\,|\Gamma_{G_r[W]}(Z)|} \\
    &\leq \frac{D}{n} \Delta m |Z| + \lambda |Z|t^{1/2} \Delta^{1/2} \\
    &= 2  t^{1/2} \Delta^{1/2} \lambda |Z|.
    \end{align*}
    Hence, $4^{-1}C \leq 2 t^{1/2}\Delta^{1/2}$, which contradicts the assumption that $C > 10t^{1/2}\Delta^{1/2}$.

    We next show \eqref{eq:expansion of big set}. 
    Suppose $Z \subseteq W$ with $t^{-1}m < |Z| \leq 2t^{-1}m$ and $r\in [t]$. 
    Let $Y = W \setminus \Gamma_{G_r[W]}(Z)$.
    Again using the expander mixing lemma,
    \[0 = e_{G_r[W]}(Z,Y) \geq \frac{D}{n}|Z|\,|Y| - \lambda \sqrt{|Z|\,|Y|}. \]
    Rearranging and using the assumption that $|Z| > t^{-1}m = (t \Delta)^{-1/2}\frac{n\lambda}{D}$,
    \[|Y| \leq \left ( \frac{n\lambda}{D} \right )^2 |Z|^{-1} < \frac{n \lambda}{D}(t \Delta)^{1/2}. \]
    Using $|Z| \leq 2t^{-1} m = 2 (t \Delta)^{-1/2} \frac{n\lambda}{D}$, we have
    \[t \Delta |Z| \leq 2 (t \Delta)^{1/2} \frac{n \lambda}{D}. \]
    Hence, using the assumption that $C > 10(t \Delta)^{1/2}$,
    \begin{align*}
    |\Gamma_{G_r[W]}(Z)| = |W| - |Y| 
    > (1- 8tC^{-2})|S| - (t \Delta)^{1/2} \frac{n \lambda}{D}  
    &\geq |S| - 2(t \Delta)^{1/2} \frac{n \lambda}{D} \\ 
    &\geq t \Delta |Z| + |S| - 10 (t \Delta)^{1/2} \frac{n \lambda}{D},
    \end{align*}
    as claimed.

    We have now established that the hypotheses of \cref{thm:haxell colored} are satisfied for the claimed values of $k$ and $m$, and the conclusion of \cref{thm:coloredTreesInInducedSubgraphs} follows.
\end{proof}

The remainder of this section is devoted to proving \cref{thm:single distance trees}.
We use the following recent result of Han and Yang. They showed that every sufficiently good expander contains copies of all possible spanning trees of bounded maximum degree.
\begin{theorem}[Theorem~1.5, \cite{han2022spanning}]\label{thm:han}
    Let $n \in \mathbb{N}$ be sufficiently large, let $\Delta \in \mathbb{N}$, and let $p,\beta$ be positive real numbers, with
    \[\beta \leq \frac{pn}{4 \Delta^{5\sqrt{\log n}}}. \]
    Let $G$ be an $n$-vertex graph such that
    \stepcounter{propcounter}
    \begin{enumerate}[label = {\bfseries \emph{\Alph{propcounter}\arabic{enumi}}}]
        \item \label{eq:pseudorandom} $\left |e(X,Y) - p|X|\,|Y|\right | \leq \beta \sqrt{|X|\,|Y|}$ for all $X,Y \subseteq V(G)$, and
        \item \label{eq:min degree} $\delta(G) \geq 4 \sqrt{p\beta n}$.
    \end{enumerate}
    Then, $G$ contains a copy of every tree with maximum degree $\Delta$ on at most $n$ vertices.
\end{theorem}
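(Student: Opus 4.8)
\textbf{Proof proposal for Theorem~\ref{thm:han}.} The plan is to use the by-now-standard two-part strategy for embedding spanning bounded-degree trees into sufficiently pseudorandom graphs: split the target tree $T$ into a ``bulk'' part that is embedded greedily by the Friedman--Pippenger--Haxell online method, and a linear-sized collection of trivial pieces (pendant vertices, or short bare paths) that are attached at the very end using the residual expansion of $G$ together with Hall's theorem. First I would invoke a Krivelevich-type structural dichotomy: every tree $T$ on $n$ vertices with $\Delta(T)\le\Delta$ either has at least $n/(4\Delta)$ leaves, or contains at least $n/(4\Delta)$ pairwise vertex-disjoint bare paths (paths whose internal vertices have $T$-degree $2$), each of bounded length. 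Let $T'$ be obtained from $T$ by deleting a suitable set of $\Theta(n/\Delta)$ leaves in the first case, or by contracting each of the chosen bare paths to a single edge in the second; in either case $\Delta(T')\le\Delta$ and $|V(T')|\le(1-c/\Delta)n$ for an absolute constant $c>0$, so a reservoir $U\subseteq V(G)$ of at least $cn/\Delta$ vertices stays unused after $T'$ is embedded.

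Next I would verify that $G$ satisfies the expansion hypotheses needed to run the online embedding, say those of Theorem~\ref{thm:haxell colored} with $t=1$, for a threshold of the form $m=\Theta\big(n\,\Delta^{-O(\sqrt{\log n})}\big)$. For small sets $X$ (size $\le m$) one double counts the edges between $X$ and $\Gamma_G(X)$: the hypothesis $\delta(G)\ge 4\sqrt{p\beta n}$ makes this count large, while the jumbledness hypothesis bounds it by $p(\Delta+1)|X|^2+\beta\sqrt{\Delta+1}\,|X|$, and the constraint $\beta\le pn/(4\Delta^{5\sqrt{\log n}})$ renders the two incompatible unless $|\Gamma_G(X)|\ge(\Delta+1)|X|$. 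For sets $X$ of size between $m$ and $2m$, since $X$ and $V(G)\setminus\Gamma_G(X)$ span no edges, jumbledness gives $|V(G)\setminus\Gamma_G(X)|\le \beta\sqrt{n}/(p\sqrt{m})$, which the choice of $m$ keeps well below $n-|V(T')|$. Theorem~\ref{thm:haxell colored} then yields an embedding $\phi$ of $T'$ into $G$ that is ``good'', i.e.\ every small set of image vertices still has many neighbours outside $\phi(V(T'))$; crucially, the slack built into the good-embedding invariant at a vertex $v$ is exactly $\Delta-D_{T'}(v)$, which is at least the number of pendant vertices of $T$ hanging off $v$.

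To complete the embedding, in the leaves case the good-embedding property, rephrased, is precisely the Hall condition $|\Gamma_G(\phi(S))\cap U|\ge \sum_{v\in S}(D_T(v)-D_{T'}(v))$ for every $S\subseteq V(T')$, so a system of distinct representatives exists and supplies the pendant vertices. In the bare-paths case, each contracted edge must be re-expanded into a path of the prescribed length through $U$, with the paths for distinct contracted edges pairwise disjoint; here I would use that $G[U']$ still expands well for the remaining free set $U'$ to connect prescribed pairs by short internally disjoint paths, handling the $\Theta(n/\Delta)$ pairs sequentially while keeping the free set large throughout.

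The main obstacle is the quantitative bookkeeping responsible for the super-polylogarithmic factor $\Delta^{5\sqrt{\log n}}$: a single ``one-shot'' online embedding loses roughly a factor of $\Delta$ in expansion per structural layer, so controlling the interaction between the embedding of $T'$, the size and residual expansion of the reservoir, and the final attachment step --- presumably through an iterated or recursive application of the core embedding lemma over $O(\sqrt{\log n})$ stages --- is where real care is required. The other delicate point is ensuring the bare-path linkages in the last step can be realised simultaneously vertex-disjointly across all $\Theta(n/\Delta)$ contracted edges without exhausting the free set.
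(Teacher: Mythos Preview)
The paper does not prove Theorem~\ref{thm:han}; it is quoted verbatim as Theorem~1.5 of Han--Yang~\cite{han2022spanning} and used purely as a black box in the proof of Theorem~\ref{thm:treesInInducedSubgraphsHan}. There is therefore no ``paper's own proof'' to compare your proposal against.

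Your sketch is a reasonable outline of the Krivelevich-style strategy that underlies results of this type, and the broad shape (structural dichotomy on $T$, Haxell-type embedding of the trimmed tree $T'$, Hall/linking argument to finish) is indeed what Han and Yang do. However, you correctly identify the real issue yourself: the factor $\Delta^{5\sqrt{\log n}}$ does not fall out of a single one-shot application of the Friedman--Pippenger--Haxell machinery followed by a Hall step. Han and Yang obtain it by running the embedding in roughly $\sqrt{\log n}$ stages, with each stage absorbing a carefully chosen portion of the tree and re-establishing expansion on the leftover set before the next stage; tracking the loss per stage is what produces the $\Delta^{O(\sqrt{\log n})}$ dependence rather than a polynomial one. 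Your proposal names this as ``the main obstacle'' but does not actually carry it out, so as written it is a plan rather than a proof. If your goal is only to understand how the present paper uses Theorem~\ref{thm:han}, none of this is needed: the paper simply checks hypotheses \ref{eq:pseudorandom} and \ref{eq:min degree} for $G[W]$ and invokes the result.
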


\begin{theorem}\label{thm:treesInInducedSubgraphsHan}
    Let $G$ be an $(n,D,\lambda)$-graph with $n$ sufficiently large, let $C \ge 256$ be a real number, and let $S \subseteq V(G)$ with $|S| = C \frac{n\lambda}{D}$.
    Then, $G[S]$ contains a copy of every tree $T$ with at most $(1-8C^{-2})|S|$ vertices and $\Delta(T) \leq (5^{-1}C)^{(5 \sqrt{\log(|S|)})^{-1}}$.
\end{theorem}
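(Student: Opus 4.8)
The plan is to reduce \cref{thm:treesInInducedSubgraphsHan} to \cref{thm:han} applied to the graph $G[W]$, where $W$ is the large subset of $S$ with large minimum degree in each relevant distance provided by \cref{thm:subgraphsWithLargeMinDegreeExist} (used here with $t=1$). So the first step is to set $t = 1$ and apply \cref{thm:subgraphsWithLargeMinDegreeExist} to the single graph $G$ with the set $S$, obtaining $W \subseteq S$ with $|W| \geq (1 - 8C^{-2})|S|$ and $\delta(G[W]) \geq 4^{-1}C\lambda$; the hypothesis $C \geq 256 \geq 4 = 4\cdot 1^{1/2}$ makes this legal.

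The second step is to verify that $G[W]$ satisfies the two hypotheses \ref{eq:pseudorandom} and \ref{eq:min degree} of \cref{thm:han} with the choices $p = D/n$, $\beta = \lambda$, and the number of vertices taken to be $|W|$. For \ref{eq:pseudorandom}: for $X, Y \subseteq W$ we have $e_{G[W]}(X,Y) = e_G(X,Y)$, so the expander mixing lemma for $G$ gives $|e_{G[W]}(X,Y) - p|X|\,|Y|| \leq \lambda \sqrt{|X|\,|Y|} = \beta\sqrt{|X|\,|Y|}$ directly. For \ref{eq:min degree}: I need $\delta(G[W]) \geq 4\sqrt{p\beta|W|} = 4\sqrt{(D/n)\lambda|W|}$. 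Using $|W| \leq |S| = Cn\lambda/D$, the right side is at most $4\sqrt{C}\,\lambda$, and we established $\delta(G[W]) \geq 4^{-1}C\lambda$; so it suffices that $4^{-1}C \geq 4\sqrt{C}$, i.e., $C \geq 256$, which is exactly the hypothesis. One also needs the constraint $\beta \leq p|W|/(4\Delta^{5\sqrt{\log|W|}})$, i.e., $\lambda \leq (D/n)|W|/(4\Delta^{5\sqrt{\log|W|}})$; since $(D/n)|W| \geq (D/n)(1-8C^{-2})|S| = (1-8C^{-2})C\lambda \geq 2^{-1}C\lambda$ (as $C \geq 256$), this reduces to $\Delta^{5\sqrt{\log|W|}} \leq C/8$, which rearranges to the stated degree bound $\Delta \leq (5^{-1}C)^{(5\sqrt{\log|W|})^{-1}}$ up to the mild slack between $C/8$ and $C/5$ and between $\log|W|$ and $\log|S|$; I would absorb these into the constants, noting $|W|\le|S|$ makes $\log|W|\le\log|S|$ so the bound stated in terms of $|S|$ is the more restrictive one and hence suffices.

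The third step is to invoke \cref{thm:han}: since $n$ is sufficiently large, $|W|$ is also sufficiently large, so $G[W]$ contains a copy of every tree with maximum degree $\Delta$ on at most $|W|$ vertices. Since $|W| \geq (1-8C^{-2})|S|$, in particular $G[W] \subseteq G[S]$ contains every such tree on at most $(1-8C^{-2})|S|$ vertices, which is the conclusion.

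I expect the only real friction to be bookkeeping: carefully checking that the constant $256$ and the exponent $(5\sqrt{\log|S|})^{-1}$ are exactly what the three inequalities (the min-degree condition, the $\beta \leq p n/(4\Delta^{5\sqrt{\log n}})$ condition, and the arithmetic relating $|W|$ to $|S|$) force, and handling the passage between $|W|$ and $|S|$ inside logarithms and exponents so that the final statement is clean. There is no conceptual obstacle — the architecture (pass to a high-min-degree subset, then apply a black-box spanning-tree embedding theorem for expanders) mirrors the proof of \cref{thm:coloredTreesInInducedSubgraphs} but is simpler because $t=1$ and \cref{thm:han} does the embedding work directly.
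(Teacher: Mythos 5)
Your proposal is correct and follows essentially the same route as the paper: pass to the high-minimum-degree subset $W$ given by \cref{thm:subgraphsWithLargeMinDegreeExist} (with $t=1$), verify hypotheses \ref{eq:pseudorandom} and \ref{eq:min degree} of \cref{thm:han} for $G[W]$ with $p=Dn^{-1}$, $\beta=\lambda$, and $n=|W|$, and conclude. The only loose end is the slack you flag between $C/8$ and $C/5$ in the $\beta$-condition, which closes immediately once you use $(D/n)|W| \geq (1-8C^{-2})C\lambda \geq (4/5)C\lambda$ instead of the cruder bound $2^{-1}C\lambda$, together with $\Delta^{5\sqrt{\log |W|}} \leq 5^{-1}C$ from $\log|W| \leq \log|S|$.
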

\begin{proof}
    To ensure \ref{eq:min degree}, we remove the low degree vertices of $G[S]$ using \cref{thm:subgraphsWithLargeMinDegreeExist} as follows.
    By \cref{thm:subgraphsWithLargeMinDegreeExist}, there is a set $W \subseteq S$ with $|W| \geq (1-8C^{-2})|S|$ such that ${\delta(G[W]) \geq 4^{-1}C\lambda}$.
    It is also immediate from the expander mixing lemma that the induced subgraph $G[W]$ of the $(n,D,\lambda)$-graph $G$ satisfies \ref{eq:pseudorandom} with $p=Dn^{-1}$ and $\beta = \lambda$.
    With the same parameters and $n=|W|$, \ref{eq:min degree} holds because of the following inequality.
    \[\delta(G[W]) \geq 4^{-1}C\lambda \geq 4C^{1/2}\lambda \geq 4 \sqrt{Dn^{-1}\lambda |S|} \geq 4\sqrt{p\beta|W|}. \]
    The condition on $\Delta(T)$ implies the condition on $\beta$ in \cref{thm:han} with $\Delta = \Delta(T)$.  
    Finally, the proof of \cref{thm:treesInInducedSubgraphsHan} follows from applying \cref{thm:han} to $G[W]$ with parameters $p = Dn^{-1}$, $\beta = \lambda$, $n=|W|$, and $\Delta = \Delta(T)$.
\end{proof}

\singleDistanceTrees*

\begin{proof} %[Proof of \cref{thm:single distance trees}]
\cref{thm:single distance trees} follows immediately from \cref{thm:singleDistanceExpansion,thm:treesInInducedSubgraphsHan}. 
\end{proof}

\section{Constructions}\label{sec:constructions}

In this section, we describe two families of constructions in odd dimensions.
Our constructions build on a construction of Iosevich, Koh, and Rakhmonov \cite{iosevich2023quotient}.
Consider the following immediate consequence of \cref{thm:singleDistanceExpansion} and the expander mixing lemma:
\begin{theorem}\label{thm:distancesBetweenTwoSets}
    Let $X,Y \subseteq \F_q^d$, and denote
    \[S_r(X,Y) = \{(x,y) \in X \times Y: \|x-y\| =r\}. \]
    If $r \neq 0$, then
    \[\left ||S_r(X,Y)| - (q^{-1} + O(q^{-(d+1)/2}))|X|\,|Y| \right | \leq 2 q^{(d-1)/2}|X|^{1/2}|Y|^{1/2}. \]
\end{theorem}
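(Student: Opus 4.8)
The plan is to derive \cref{thm:distancesBetweenTwoSets} directly from \cref{thm:singleDistanceExpansion} and the Expander Mixing Lemma, essentially by translating the statement about distances into a statement about edges of the graph $G_r^d$ and then relaxing the regular-degree term $D$ into its explicit form $q^{d-1}+O(q^{(d-1)/2})$. This is a routine unpacking of definitions, so I expect no genuine obstacle; the only point requiring a little care is bookkeeping the error terms and checking that the double-counting convention in $e_G(X,Y)$ does not affect the claimed bound.

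\textbf{Step 1: Identify $|S_r(X,Y)|$ with an edge count.} By the definition of $G_r^d$, for $x,y \in \F_q^d$ we have $xy \in E(G_r^d)$ precisely when $\|x-y\| = r$. Since $r \neq 0$, there is no $x$ with $\|x - x\| = 0 = r$, so no diagonal pairs contribute; hence $|S_r(X,Y)| = \#\{(x,y) \in X \times Y : xy \in E(G_r^d)\} = e_{G_r^d}(X,Y)$, with the convention (as in the paper) that pairs in $X \cap Y$ are counted twice on both sides, so the identity is exact.

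\textbf{Step 2: Apply the Expander Mixing Lemma.} By \cref{thm:singleDistanceExpansion}, $G_r^d$ is an $(n,D,\lambda)$-graph with $n = q^d$, $D = q^{d-1} + O(q^{(d-1)/2})$, and $\lambda = 2q^{(d-1)/2}$. The Expander Mixing Lemma then gives
\[
\left| e_{G_r^d}(X,Y) - D n^{-1} |X|\,|Y| \right| \leq \lambda \sqrt{|X|\,|Y|} = 2 q^{(d-1)/2} |X|^{1/2} |Y|^{1/2},
\]
which is exactly the right-hand side of the claimed inequality.

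\textbf{Step 3: Expand the main term.} It remains to rewrite $D n^{-1}$. Since $n = q^d$ and $D = q^{d-1} + O(q^{(d-1)/2})$, we have $D n^{-1} = q^{-1} + O(q^{-(d+1)/2})$. Substituting this into Step 2 and using the identity from Step 1 yields
\[
\left| |S_r(X,Y)| - \left(q^{-1} + O(q^{-(d+1)/2})\right) |X|\,|Y| \right| \leq 2 q^{(d-1)/2} |X|^{1/2} |Y|^{1/2},
\]
as desired. (Strictly, one should absorb the $O(q^{-(d+1)/2})|X||Y|$ discrepancy either into the stated main term, as written, or observe it is dominated by the error term when $|X||Y|$ is not too large; the statement as phrased keeps it inside the main term, so no further work is needed.) This completes the proof.
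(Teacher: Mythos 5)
Your proof is correct and follows exactly the route the paper intends: the paper presents \cref{thm:distancesBetweenTwoSets} as an immediate consequence of \cref{thm:singleDistanceExpansion} together with the expander mixing lemma, which is precisely your identification of $|S_r(X,Y)|$ with $e_{G_r^d}(X,Y)$ followed by substituting $n=q^d$, $D=q^{d-1}+O(q^{(d-1)/2})$, $\lambda=2q^{(d-1)/2}$. Your bookkeeping of the ordered-pair convention and the expansion $Dn^{-1}=q^{-1}+O(q^{-(d+1)/2})$ matches the paper's (implicit) argument, so there is nothing to add.
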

In particular, if $|S_r(X,X)| = 0$ for some $r \in \F_q^*$, then $|X| < 2q^{(d+1)/2}$. This special case of \cref{thm:distancesBetweenTwoSets} is shown by Iosevich and Rudnev \cite{iosevich2007erdos}.
The exponent $(d+1)/2$ is known to be best possible in odd dimensions; for example, see~\cite[Theorem~2.7]{hart2011averages}.

We describe two families of constructions that show that both the upper and lower bounds of \cref{thm:distancesBetweenTwoSets} are tight for many different sizes of $X,Y$.
In particular, the first construction shows that the bound $|S|-|V(\mathcal{T})|=\Omega(q^{d+1}|S|^{-1})$ in \cref{thm:single distance trees} cannot be improved in odd dimensions, even for large $S$.

It is more convenient to work with a quadratic form that is equivalent to distance, rather than working with distance directly.
Two quadratic forms $Q_1,Q_2$ over $\F_q^d$ are equivalent if there is a nonsingular linear transformation $M$ such that $Q_1(x) = Q_2(Mx)$.
None of the results concerning distance sets in this paper depend on the specific quadratic form used, only on the equivalence class.
A standard result on quadratic forms over finite fields of odd order (see {\em e.g.} \cite[Section~2]{iosevich2023quotient}) is that there are precisely two equivalence classes: either the determinant of the matrix associated to a quadratic form is square, or it is non-square.
In particular, the determinant of the quadratic form $\|x\| = x_1^2 + x_2^2 + \ldots + x_d^2$ is $1$, a square.

We assume that $d$ is odd, and work with the quadratic form
\begin{equation}\label{eq:quadraticForm}Q(x) = x_1^2 - x_2^2 + \ldots +x_{d-2}^2 - x_{d-1}^2 + \mu x_d^2, \end{equation}
where $\mu$ is chosen so that $Q$ is equivalent to $\|\cdot\|$.
In more detail, the determinant of $Q$ is $\mu$ if $d \equiv 1 \mod 4$, and is $-\mu$ if $d \equiv 3 \mod 4$.
Since $Q$ is equivalent to $\|\cdot\|$, the determinant of $Q$ is square.
Denote by $\eta(x)$ the Legendre symbol of $x$, which takes value $1$ at a non-zero square, $-1$ at a non-square, and $\eta(0)=0$.
Then, $\eta(\mu) = 1$ if $d \equiv 1 \mod 4$, and $\eta(-\mu) = 1$ if $d \equiv 3 \mod 4$.
Since $-1$ is a square if and only if $q \equiv 1 \mod 4$, we have that $\eta(\mu)=-1$ if $d \equiv 3 \mod 4$ and $q \equiv 3 \mod 4$, and otherwise $\eta(\mu) = 1$.

Note that \cref{thm:distancesBetweenTwoSets} implies that, if $|S_r(X,Y)| = 0$ for some $r \in \F_q^*$, then ${|X|\,|Y| = O(q^{d+1})}$.
The next construction gives sets of various sizes that attain this upper bound.

\begin{proposition}\label{thm:zeroDistanceConstruction}
    Let $d,k \in \mathbb{N}$ with $d \geq 3$ odd and $1 \leq k < (d-1)/2$.
    For any $r \in \F_q^*$, there are sets $X,Y \subseteq \F_q^d$ such that $|X|= \Omega(q^{d-k})$ and $|Y|=q^{k+1}-q^k$, and $|S_r(P,Q)| = 0$.
\end{proposition}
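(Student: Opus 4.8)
The plan is to construct $X$ and $Y$ explicitly using the structure of the quadratic form $Q$ from \eqref{eq:quadraticForm}, exploiting the fact that a hyperbolic plane $z_1^2 - z_2^2$ represents every value, including zero, in a controlled way. Working with $Q$ instead of $\|\cdot\|$ is legitimate since the two forms are equivalent and distance sets only depend on the equivalence class. Write coordinates of $\F_q^d$ as $(u, v, w)$ where $u = (x_1, x_2)$ uses the first hyperbolic pair, $v$ ranges over the remaining coordinates $x_3, \ldots, x_d$ contributing a form $Q'$ in $d-2$ variables, so that $Q(x) = x_1^2 - x_2^2 + Q'(v)$.

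First I would take $Y$ to live inside the $2k$-dimensional subspace spanned by the first $k$ hyperbolic pairs together with a translate, arranged so that $Q$ restricted to the relevant affine piece is identically the fixed value $r$; concretely, one picks an affine subspace of dimension $k+1$ on which $Q - r$ vanishes, or slightly more cleverly, a set of size $q^{k+1} - q^k$ built from $k$ hyperbolic pairs where we delete the ``bad'' fiber (the $q^k$ vs.\ $q^{k+1}$ discrepancy is exactly the count of isotropic vs.\ anisotropic vectors in a sum of hyperbolic planes, so $|Y| = q^{k+1} - q^k$ should be the number of representations of a nonzero value by $k$ hyperbolic planes, up to scaling). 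Then I would take $X$ to be a large set in the complementary directions chosen so that $Q(x - y)$ can never equal $r$: the idea is that $x - y$ always lands in a region where $Q$ omits the value $r$, which forces $X$ to avoid certain fibers of $Q$ in roughly $d - 2k$ of the coordinates but still leaves $\Omega(q^{d-k})$ points. The product $|X|\,|Y| = \Omega(q^{d-k}) \cdot (q^{k+1} - q^k) = \Omega(q^{d+1})$, matching the upper bound from \cref{thm:distancesBetweenTwoSets}.

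More precisely, the mechanism I expect to use is: choose a subspace $V_0$ of dimension $k$ that is totally isotropic for $Q$ (possible since $Q$ has Witt index at least $k$ when $1 \le k < (d-1)/2$), let $Y$ be a carefully chosen subset of a coset of $V_0$ plus a short transversal so that $Q \equiv r$ on $Y$, and let $X = \{x : Q(x) \in A\}$ for a suitable set $A \subseteq \F_q$ chosen so that $A + r$ (or the relevant combination arising from the cross terms $\langle x, y\rangle$) avoids a value that $Q$ must hit. The standard count that a fiber $Q^{-1}(c)$ in $\F_q^m$ has size $q^{m-1} + O(q^{(m-1)/2})$ then gives $|X| = \Omega(q^{d-1}) \cdot$ (something), but the point is to balance so that $X$ loses only a $q^{-k+1}$-factor; the bookkeeping of exactly which fibers to keep is where care is needed.

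The main obstacle will be getting the cross-term interaction right: $Q(x-y) = Q(x) - B(x,y) + Q(y)$ where $B$ is the associated bilinear form, so avoiding $Q(x-y) = r$ is not simply a matter of restricting the values $Q(x)$ takes — one must control $B(x,y)$ simultaneously over all $y \in Y$. The resolution I anticipate is to choose $Y$ inside a totally isotropic subspace so that $Q(y)$ contributes nothing and $B(x, y)$ is a linear functional in $x$ whose behavior is governed by the projection of $x$ onto the dual of that subspace; then $X$ is defined by pinning that projection to a single value (costing one factor of $q$ per isotropic direction used, hence the $q^{d-k}$) while leaving the quadratic fiber condition on the remaining coordinates free, which cannot hit $r$ for parity/character reasons inherited from the construction of Iosevich, Koh, and Rakhmonov \cite{iosevich2023quotient}. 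I would verify the size bounds $|X| = \Omega(q^{d-k})$ and $|Y| = q^{k+1} - q^k$ by the elementary fiber-counting formulas for quadratic forms over $\F_q$, and confirm $|S_r(X,Y)| = 0$ by the direct computation just described.
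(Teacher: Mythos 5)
Your overall skeleton points in the same direction as the paper's construction (work with the form $Q$ from \eqref{eq:quadraticForm}; put both $X$ and $Y$ along the diagonals of the first $k$ hyperbolic pairs, so that those $2k$ coordinates cancel in $Q(x-y)$; give $Y$ its extra factor of $q-1$ from a punctured transversal line in the $\mu x_d^2$ direction, so $|Y| = q^k(q-1)$ directly --- not from counting representations of a nonzero value by $k$ hyperbolic planes, which is a different and numerically incorrect quantity). But the decisive step is missing. After the cancellation one is left with $Q(x-y) = t_1 + \mu c^2$, where $t_1 = x_1^2 - x_2^2 + \cdots - x_{d-2k-1}^2$ is the residual form evaluated on the unpinned block of $x$ and $c$ (the difference of the last coordinates of $x$ and $y$) ranges over all of $\F_q$ as $y$ varies. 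The ``parity/character reasons inherited from \cite{iosevich2023quotient}'' that you invoke only handle the case $\eta(r) \neq \eta(\mu)$: in that construction $t_1$ is absent and $Q(x-y) = \mu c^2$, so the square-character obstruction alone excludes only half of the values $r$, whereas \cref{thm:zeroDistanceConstruction} claims every $r \in \F_q^*$. Worse, if, as in your final paragraph, the quadratic condition on the remaining coordinates of $x$ is left ``free'', the construction outright fails: pick $x$ with $t_1 = r$ and last coordinate equal to that of some $y \in Y$, giving $Q(x-y)=r$. The new ingredient the paper adds (its set $R^\times$) is an explicit restriction on $X$: keep only those $x$ for which $t_1$ avoids $\{r - \mu c^2 : c \in \F_q\}$, a set of size $(q+1)/2$, so that $t_1 + \mu c^2 \neq r$ for every $c$; since the allowed values of $t_1$ still number $(q-1)/2 = \Omega(q)$ and the form in $d-2k-1 \geq 2$ variables has fibers of size $\Omega(q^{d-2k-2})$, this costs only a constant factor and yields $|X| = \Omega(q^k \cdot q^{d-2k-1}\cdot q) = \Omega(q^{d-k})$. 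Your proposal gestures at ``a suitable set $A$'' but never identifies it, and you explicitly defer exactly this bookkeeping, so the proof is not there.

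Two secondary inconsistencies would also need repair. First, you ask in one place for $Q \equiv r$ on $Y$ and in another for $Y$ to sit in a totally isotropic coset (so $Q \equiv 0$ there); in the working construction neither holds: $Q(y) = \mu y_d^2$ ranges over all nonzero values of character $\eta(\mu)$, and only that character matters. Second, pinning the projection of $x$ onto the dual of the isotropic subspace (equivalently, requiring $B(x,V_0)=0$, which is what the diagonal condition achieves) kills the cross terms with the $V_0$-part of $y$ but \emph{not} with the transversal coordinate of $y$; that surviving cross term is exactly what gets absorbed, by completing the square, into $\mu(x_{d-2k}-y_d)^2$. Consequently the character restriction must be imposed on the residual form $t_1$ excluding the last coordinate, not on $Q(x)$ itself as in your ``$X=\{x: Q(x)\in A\}$'' formulation, since $Q(x)$ also contains the $\mu x_{d-2k}^2$ term that participates in the completed square.
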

\begin{proof}
    Let $R^\times = \{r - s: \eta(s) = \eta(\mu)\}$.
    Note that $|R^\times| = (q-1)/2$.
    Let
    \begin{align*}
        X &= \{(a_1,a_1,\ldots,a_k,a_k,x_1,\ldots,x_{d-2k}) \in \F_q^d: x_1^2 - x_2^2 + \ldots - x_{d-2k-1}^2 \notin R^\times\}, \text{ and} \\
        Y &= \{(b_1,b_1,\ldots,b_k,b_k,0,\ldots,0, y) \in \F_q^d: y \neq 0\}.
    \end{align*}
    Since $|\F_q \setminus R^\times| = \Omega(q)$, $|X| = \Omega(q^k q^{d-2k}) = \Omega(q^{d-k})$.
    It is easy to see that $|Y|=q^{k+1}-q^k$.
    For $(x,y) \in X \times Y$, all of the terms in $Q(x-y)$ that depend on the first $2k$ coordinates cancel, and
    \[Q(x-y) = x_1^2-x_2^2 + \ldots - x_{d-2k-1}^2 + \mu(x_{d-2k} - y)^2. \]
    Let $t_1 = x_1^2-x_2^2 + \ldots - x_{d-2k-1}^2$ and $t_2 = \mu(x_{d-2k} - y)^2$.
    Note that $\eta(t_2) = \eta(\mu)$.
    Since $t_1 \notin R^{\times}$, we have that $\eta(r-t_1) \neq \eta(\mu)$, and so $r-t_1-t_2 \neq 0$ and $Q(x-y) = t_1 + t_2 \neq r$.
\end{proof}

Iosevich, Koh, and Rakmonov used a similar analysis in \cite[Section 4]{iosevich2023quotient}.
In their construction, $X =\{(a_1, a_1, \ldots, a_{(d-1)/2}, a_{(d-1)/2}, b) \in \F_q^d\}$. 
For any $x,y \in X$, we have $Q(x-y) = (x_d - y_d)^2$, so $\mu(Q(x-y)) = 1$.
In particular, if $\mu(r)=-1$, then $S_r(X,X) = 0$.

\cref{thm:single distance trees} states that $S$ contains every bounded-degree $r$-distance tree $\mathcal{T}$ on at most $|S|-\Omega(q^{d+1}|S|^{-1})$ vertices.
\cref{thm:zeroDistanceConstruction} shows that, for many sizes $|S|$, there are sets $S$ and $Y$ with $|Y| = \Omega(q^{d+1}|S|^{-1})$ such that there is no pair $(x,y) \in S \times Y$ with $\|x-y\| = r$.
Consequently, error term in \cref{thm:single distance trees} is within a constant factor of being as small as possible.

Another consequence of \cref{thm:distancesBetweenTwoSets} is that, if $X,Y \subseteq \F_q^d$ such that $\|x-y\| = r$ for each pair $(x,y) \in X \times Y$, then $|X|\,|Y| = O(q^{d-1})$.
The next construction shows that there are sets $X,Y$ of various sizes that are within a constant factor of attaining this bound.

\begin{proposition}
    Let $d,k \in \mathbb{N}$ with $d \geq 3$ odd and $1\leq k \leq (d-1)/2$.
    There are sets $X, Y \subseteq \F_q^d$ and $r \in \F_q^*$ such that $|X| =\Omega(q^{d-k-1})$, $|Y|=q^{k}$, and $|S_r(X,Y)| = |X|\,|Y|$.
\end{proposition}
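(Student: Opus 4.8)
The plan is to mirror the structure of the preceding construction (\cref{thm:zeroDistanceConstruction}), but now arranging that the quadratic form $Q(x-y)$ is \emph{constant} across all pairs, rather than avoiding a single value. The key observation is that on a diagonally repeated block of coordinates the hyperbolic terms cancel, so we want to engineer the remaining coordinates so that the leftover part of $Q(x-y)$ is a fixed nonzero element. First I would work with the equivalent form $Q$ of \eqref{eq:quadraticForm} and split the $d$ coordinates into (a) $2k-2$ coordinates arranged in $k-1$ repeated pairs $(a_1,a_1,\ldots,a_{k-1},a_{k-1})$ on which hyperbolic terms cancel, (b) a free block of $d-2k+1$ coordinates on which I place a sphere-like condition, and (c) a couple of coordinates used to pin down the value of the form. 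Concretely, I would take
\[
X = \{(a_1,a_1,\ldots,a_{k-1},a_{k-1},\mathbf{0},x) \in \F_q^d : x \text{ lies on a fixed affine subspace}\}
\]
and
\[
Y = \{(b_1,b_1,\ldots,b_{k-1},b_{k-1},\mathbf{y},c) \in \F_q^d : \mathbf{y} \in \F_q^{(d-2k+1)-1}\},
\]
chosen so that $Q(x-y)$ equals a fixed $r$ for every pair.

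The natural way to force a constant value is to make $X$ an affine subspace that is ``totally isotropic up to a shift'': choose a $(d-k-1)$-dimensional affine subspace $X$ of $\F_q^d$ on which $Q$ restricts to a constant, and $Y$ a translate of a $k$-dimensional linear subspace contained in the radical-type structure, so that for $x \in X$ and $y \in Y$ the difference $x - y$ stays on a coset where $Q$ is constant. More explicitly, using the hyperbolic pairs: put the first $2k$ coordinates of both $X$ and $Y$ into repeated pairs (this kills $2k$ coordinates' worth of the form on differences and gives the factor $q^k$ for $|Y|$ after accounting that $Y$'s first $2k$ coordinates range freely in $k$ pairs — here one pair fewer to reach $|Y| = q^k$), fix the remaining coordinates of $Y$ to constants, and let the remaining $d - 2k$ coordinates of $X$ range over the fiber $\{u : q_0(u) = r'\}$ of a $(d-2k-1)$-dimensional nondegenerate subform $q_0$, which has size $q^{d-2k-1} + O(q^{(d-2k-1)/2}) = \Omega(q^{d-2k-1})$; combined with the $q^{k}$ (or $q^{k-1}$) choices from the repeated pairs, this yields $|X| = \Omega(q^{d-k-1})$. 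Then $Q(x-y) = q_0(\text{block of }x) + (\text{fixed contribution from pinned coordinates}) = r' + (\text{const})$, which I set equal to $r$ by choosing the pinned coordinate of $Y$ appropriately, and I choose $r'$ in the correct square class so that such $r$ is nonzero and so that the fiber is nonempty of the right size.

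The steps in order would be: (1) reduce to the form $Q$ and recall $\eta(\mu)$ as in the text; (2) define $X$ and $Y$ with the repeated-pair blocks, the constant block, and the subform-fiber block; (3) compute $Q(x-y)$ and observe the cancellation of repeated-pair terms; (4) verify the sizes $|X| = \Omega(q^{d-k-1})$ via the point count on a nondegenerate quadric, and $|Y| = q^{k}$ directly; (5) pick the free parameter so that the constant value $r$ is a prescribed nonzero field element (or at least some nonzero element, which is all the statement requires), and conclude $|S_r(X,Y)| = |X|\,|Y|$. The main obstacle I anticipate is the bookkeeping in step (4): making sure the fiber of the nondegenerate subform $q_0$ over a suitable value has size exactly $q^{\dim - 1} \pm q^{(\dim-1)/2}$ (so $\Omega(q^{d-k-1})$) requires the standard count of points on affine quadrics over $\F_q$, with a little care about which coset ($r'$ square vs.\ nonsquare vs.\ zero) is chosen — choosing $r' \neq 0$ of the convenient square class avoids the degenerate $r' = 0$ case and keeps the count clean. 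A secondary subtlety is checking that the constant $r$ we produce can be taken nonzero; since we have a free additive parameter (the pinned last coordinate of $Y$ enters quadratically, or we can translate $q_0$'s target $r'$), this is a one-line verification. No expander or incidence input is needed here — the whole proof is an explicit construction plus an elementary quadratic-form computation, exactly parallel to \cref{thm:zeroDistanceConstruction}.
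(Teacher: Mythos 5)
Your final construction is essentially the paper's: take $k$ repeated pairs in the first $2k$ coordinates of both sets, let the last $d-2k$ coordinates of $X$ range over the quadric $\{x_1^2-x_2^2+\cdots-x_{d-2k-1}^2+\mu x_{d-2k}^2=r\}$ (which has $\Omega(q^{d-2k-1})$ points), and fix the last $d-2k$ coordinates of $Y$ to zero, so that the hyperbolic pair terms cancel, $Q(x-y)=r$ for every pair, $|X|=\Omega(q^{k}\cdot q^{d-2k-1})=\Omega(q^{d-k-1})$, and $|Y|=q^{k}$. Two small repairs to your write-up are needed: your first displayed attempt (with $\mathbf{y}$ ranging freely in $Y$ while the corresponding block of $X$ is zero) does not make $Q(x-y)$ constant, and a nonzero pinned tail of $Y$ does not merely add a constant to $q_0$ because of cross terms, so you should either set that tail to zero or translate the quadric fiber of $X$ (your fallback), use $k$ rather than $k-1$ repeated pairs, and note that in the boundary case $k=(d-1)/2$ the leftover form is $\mu x_{d-2k}^2$, forcing $\eta(r)=\eta(\mu)$ --- harmless, since the statement only asks for some $r\in\F_q^*$.
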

\begin{proof}
    Let
    \begin{align*}
        X &= \{(a_1,a_1, \ldots, a_k,a_k,x_1, \ldots, x_{d-2k}) \in \F_q^d: x_1^2 - x_2^2 + \ldots - x_{d-2k-1}^2 + \mu x_{d-2k}^2 = r\} , \text{ and} \\
        Y &= \{(b_1,b_1,\ldots,b_k,b_k,0,\ldots,0) \in \F_q^d\}.
    \end{align*}
    Using the fact that the sphere of radius $r$ in $\F_q^{d-2k}$ contains $\Omega(q^{d-2k-1})$ points, we have $|X| = \Omega(q^k q^{d-2k-1}) = \Omega(q^{d-k-1})$, and it is easy to see that $|Y| = q^k$.
    For $(x,y) \in X \times Y$, all of the terms in $Q(x-y)$ that depend on the first $2k$ coordinates cancel, and
    \[Q(x-y) = x_1^2 - x_2^2 + \ldots + \mu x_{d-2k}^2 = r. \]
    
    In the special case that $k = (d-1)/2$, for $(x,y) \in X \times Y$ we have $\|x - y\| = \mu(x_1 - y_1)^2$; hence, in this special case, $\eta(r) = \eta(\mu)$.
    For $k < (d-1)/2$, the described construction works for arbitrary $r \in \F_q^*$.
\end{proof}

\section{Concluding remarks}\label{sec:discussConjecture}

\cref{conj:optimal distance trees} says that every sufficiently large set $S \subseteq \F_q^d$ contains every bounded-degree distance tree on $|S| - \Omega(q^{d+1}|S|^{-1})$ vertices.
By contrast, \cref{thm:distance trees} implies that every sufficiently large set $S\subseteq \F_q^d$ contains every bounded-degree distance tree on $|S| - \Omega(q^{(d+2)/2})$ vertices.
In this section, we will discuss where in our proof of \cref{thm:distance trees} the apparently sub-optimal error term comes from, as well as potential routes to improve it.

\cref{thm:distance trees} is an immediate corollary of \cref{thm:coloredTreesInInducedSubgraphs}.
The proof of \cref{thm:coloredTreesInInducedSubgraphs} proceeds in two steps.
First, we use \cref{thm:subgraphsWithLargeMinDegreeExist} to remove from $\mathcal{G}[S]$ all vertices that have low degree in any color.
Then, we apply our colorful generalization of Haxell's theorem to find a nearly spanning bounded-degree tree in what remains.

When we apply \cref{thm:subgraphsWithLargeMinDegreeExist}, we remove $O\left(t \left( \frac{n \lambda}{D} \right)^2 |S|^{-1}\right)$ vertices of $S$, leaving a set $W$ such that $\mathcal{G}[W]$ has large minimum degree in every color.
The factor of $t$ in this expression means that we cannot use \cref{thm:subgraphsWithLargeMinDegreeExist} in a proof of \cref{conj:optimal distance trees}.
Unfortunately, \cref{thm:subgraphsWithLargeMinDegreeExist} is tight.
For example, let $G_1$ be an $(n,D,\lambda)$-graph on vertex set $V(\mathcal{G})$, and let $Y_1 \subseteq S \subseteq V(\mathcal{G})$ so that $|\Gamma_{G_1[S]}(Y_1)| = 0$ and $|Y_1|=\Omega\left(\left(\frac{n\lambda}{D} \right)^2 |S|^{-1}\right)$.
Let $\phi_1$ be the identity map on $V(\mathcal{G})$, and let $\phi_2, \ldots, \phi_t: V(\mathcal{G})\rightarrow V(\mathcal{G})$ be permutations so that $\phi_i(S) = S$ for all $i \in [t]$ and $\phi_i(Y_1) \cap \phi_j(Y_1) = \emptyset$ for $1 \leq i < j \leq t$.
For $i \in [t]\setminus \{1\}$, let $G_i$ be the graph on $V(\mathcal{G})$ so that $xy \in E(G_i)$ if and only if $\phi_i^{-1}(x)\phi_i^{-1}(y) \in E(G_1)$.
Note that for each $i \in [t]\setminus \{1\}$, the graph $G_i$ is isomorphic to $G_1$, and thus also an $(n,D,\lambda)$-graph.
Then $\mathcal{G}=(G_1, \ldots, G_t)$ satisfies the hypotheses of \cref{thm:subgraphsWithLargeMinDegreeExist}, and $|Y_1 \cup \cdots \cup Y_t| = t|Y_1| =  \Omega \left( t \left(\frac{n \lambda}{D}\right)^2|S|^{-1} \right)$.

After removing low degree vertices with \cref{thm:subgraphsWithLargeMinDegreeExist}, we use \cref{thm:haxell colored} to find an edge-colored tree in the remaining edge-colored graph $\mathcal{G}[W]$.
This step introduces the dominant $\Omega(t^{1/2}\frac{n \lambda}{D})$ error term in \cref{thm:coloredTreesInInducedSubgraphs}.
In order to apply \cref{thm:haxell colored}, we need $|\Gamma_\mathcal{G[W]}(X)| > \Delta|X|$ for every not-too-large set $X \subseteq W \times [t]$.
The way that we ensure this in the proof of \cref{thm:coloredTreesInInducedSubgraphs} is by obtaining $|\Gamma_{G_r[W]}(Z)| > t\Delta|X|$ for every small $Z \subseteq W$ and $r \in [t]$.
By pigeonholing, there must be some $r \in [t]$ that accounts for at least a $t^{-1}$ fraction of $X$.
It is this pigeonholing argument that introduces the dependence of the error term on $t$.

We mention two natural directions for future research that might lead to improving \cref{thm:distance trees}.
The first is to prove a stronger version of \cref{thm:coloredTreesInInducedSubgraphs} that avoids the use of \cref{thm:subgraphsWithLargeMinDegreeExist} and \cref{thm:haxell colored}.
The second is to use the fact that the various distance graphs on $\F_q^d$ are closely related to each other to prove \cref{conj:optimal distance trees} without relying on \cref{thm:coloredTreesInInducedSubgraphs}.

As mentioned above, neither \cref{thm:subgraphsWithLargeMinDegreeExist} nor our application of \cref{thm:haxell colored} are adequate to prove \cref{conj:optimal distance trees}.
However, we conjecture the following improvement of \cref{thm:coloredTreesInInducedSubgraphs}, which would imply \cref{conj:optimal distance trees}.

\begin{conjecture}[Stronger version of \cref{thm:coloredTreesInInducedSubgraphs}]
    For any $\Delta \geq 2$, there is a constant $C_{\Delta}$ such that the following holds.
    Let $t \in \mathbb{N}$, and let $\mathcal{G} = \{G_1, \ldots, G_t\}$ be a family of $(n, D, \lambda)$-graphs on the same vertex set $V(\mathcal{G})$.
    If $S \subseteq V(\mathcal{G})$ with $|S| \geq C_\Delta^{o(\log (n))} \frac{n \lambda}{D}$, then $S$ contains every $[t]$-colored tree $\mathcal{T}$ with at most $|S| - 100 \left(\frac{n \lambda}{D} \right)^2|S|^{-1}$ vertices and maximum $r$-degree $\Delta_r(\mathcal{T}) \leq \Delta$ for each $r \in [t]$.
\end{conjecture}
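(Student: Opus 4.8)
The plan is to deduce the stronger version of \cref{thm:coloredTreesInInducedSubgraphs} stated above --- which would in turn yield \cref{conj:optimal distance trees} --- by imitating the proof of \cref{thm:treesInInducedSubgraphsHan}, replacing the single-graph Han--Yang theorem (\cref{thm:han}) by a colorful and ``deficiency-robust'' analogue. Write $L := \left(\frac{n\lambda}{D}\right)^2|S|^{-1}$, so that we want $\mathcal{G}[S]$ to contain every bounded-$r$-degree $[t]$-colored tree on at most $|S|-100L$ vertices. Two consequences of the expander mixing lemma are available at once: for every color $r$ and all $X,Y\subseteq S$ we have $\bigl|e_{G_r[S]}(X,Y)-\frac{D}{n}|X|\,|Y|\bigr|\leq\lambda\sqrt{|X|\,|Y|}$; and, applied to deficient sets, it shows that for each $r$ the set $B_r$ of vertices whose $G_r[S]$-degree lies below the Han--Yang minimum-degree threshold $4\sqrt{Dn^{-1}\lambda|S|}$ has $|B_r|=O(L)$. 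The essential difficulty, as explained in the two paragraphs preceding the conjecture, is that $\bigl|\bigcup_{r\in[t]}B_r\bigr|$ may be as large as $\Omega(tL)$, so one cannot simply delete the deficient vertices and then quote a spanning-tree theorem; the tree must be embedded inside $\mathcal{G}[S]$ directly, tolerating the sets $B_r$.

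The concrete target is a robust colorful Han--Yang theorem: if $\mathcal{G}=\{G_1,\dots,G_t\}$ is a family of graphs on $N$ vertices with $\bigl|e_{G_r}(X,Y)-p|X|\,|Y|\bigr|\leq\beta\sqrt{|X|\,|Y|}$ for all $r$ and all $X,Y$, if for every $r$ all but $O((\beta/p)^2)$ vertices have $G_r$-degree at least $4\sqrt{p\beta N}$, and if $pN/\beta\geq\Delta^{c\sqrt{\log N}}$ for a suitable constant $c$, then $\mathcal{G}$ contains every $[t]$-colored tree on at most $N-100(\beta/p)^2$ vertices with maximum $r$-degree at most $\Delta$ in each color. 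Applying this with $N=|S|$, $p=D/n$, $\beta=\lambda$ --- so that $pN/\beta=|S|\left(\frac{n\lambda}{D}\right)^{-1}$, which the hypothesis $|S|\geq C_\Delta^{o(\log n)}\frac{n\lambda}{D}$ forces to be at least $\Delta^{c\sqrt{\log|S|}}$ because $\log|S|=O(\log n)$ --- would give exactly the conjecture.

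To prove the lemma I would follow the architecture of Han and Yang's proof: decompose $\mathcal{T}$ into a bounded number of bounded-degree subtrees, embed all but $O((\beta/p)^2)$ of its vertices by a colorful Friedman--Pippenger/Haxell-type vertex-by-vertex process --- the colorful expansion invariants recorded in \cref{thm:haxell colored}, together with the ``rolling backwards'' refinement used in its proof, are exactly what one maintains --- and then absorb the remaining $O((\beta/p)^2)$ vertices using the reservoir of at least $100(\beta/p)^2$ unused host vertices. The ingredient that goes beyond \cref{thm:han} is a way of \emph{routing around deficient vertices}: whenever the process is about to grow the image $\phi(u)$ of a tree-vertex $u$ along a color-$r$ edge, it must keep $\phi(u)\notin B_r$. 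Since the embedded portion is always a connected subtree, each tree-vertex $u$ is ever grown in at most $\deg_{\mathcal{T}}(u)$ colors, the number of deficient vertices in any one active color is only $O((\beta/p)^2)$, and the process has both the unused reservoir and strong expansion at its disposal, one should be able to maintain the Friedman--Pippenger invariants together with the side condition that the current frontier vertex is good for every color in which it still needs to grow.

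The hard part will be exactly this last point: keeping the \emph{total} cost of rerouting down to $O((\beta/p)^2)$ rather than $O(t(\beta/p)^2)$. This is precisely the move that the tightness construction for \cref{thm:subgraphsWithLargeMinDegreeExist} forbids if one attempts it as a preprocessing clean-up, so the argument must genuinely exploit that each $B_r$ is small \emph{relative to its own color} and that a typical tree-vertex meets few colors, rather than treating $\bigcup_r B_r$ as a single bad set. In the distance-tree case there is a potential shortcut hinted at in the remarks above --- the graphs $G_r^d$ are mutually generic, a generic translation conjugating $G_r^d$ to $G_{r'}^d$, so one might try to choose the embedding together with a single global clean-up tuned to whichever color is momentarily active --- but this still seems to require the same robust colorful embedding machinery, which is where I expect the real difficulty to lie.
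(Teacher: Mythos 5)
This statement is a conjecture in the paper; there is no proof of it anywhere in the text, so your proposal cannot be matching or diverging from an existing argument---it has to stand on its own, and as it stands it does not. The reduction you describe (apply the mixing lemma per color to see that each deficiency set $B_r$ has size $O\bigl(\left(\frac{n\lambda}{D}\right)^2|S|^{-1}\bigr)$, then invoke a ``robust colorful Han--Yang theorem'' with $p=D/n$, $\beta=\lambda$, $N=|S|$) is a sensible framing, and the parameter bookkeeping in your second paragraph is consistent. But the entire content of the conjecture is packed into the unproved lemma, and your sketch of its proof stops exactly at the point the paper identifies as the obstruction. The concluding remarks explain that both available tools lose a factor of $t$: \cref{thm:subgraphsWithLargeMinDegreeExist} is tight (the permuted-copies construction shows $\bigl|\bigcup_r B_r\bigr|$ can genuinely be $\Omega\bigl(t\left(\frac{n\lambda}{D}\right)^2|S|^{-1}\bigr)$), and the application of \cref{thm:haxell colored} loses $t^{1/2}$ through the pigeonhole that converts expansion of sets $X\subseteq V(\mathcal{G})\times[t]$ into single-color expansion. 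Your proposed fix---maintain Friedman--Pippenger-type invariants while additionally forcing $\phi(u)\notin B_r$ for every color $r$ still needed at $u$---is stated as something one ``should be able to'' do, and you yourself flag that bounding the total rerouting cost by $O\bigl(\left(\frac{n\lambda}{D}\right)^2|S|^{-1}\bigr)$ rather than $O\bigl(t\left(\frac{n\lambda}{D}\right)^2|S|^{-1}\bigr)$ is the hard part. That is not a minor technicality to be filled in: the avoid-sets change the invariant $R(X,\phi)\geq 0$ of \cref{thm:haxell colored} (whose hypotheses are exactly the colorful expansion statements that fail for sets concentrated in a single deficient color), and no mechanism is offered for why the losses across colors do not accumulate. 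Nothing in the sketch rules out, for instance, that the frontier of the partial embedding repeatedly lands near many different $B_r$'s, forcing per-color corrections that sum to a $t$-dependent error.

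Two smaller points. First, the Han--Yang machinery (\cref{thm:han}) is used in the paper only as a black box for a \emph{single} graph in \cref{thm:treesInInducedSubgraphsHan}; a colorful analogue with tree decomposition, connecting, and absorption in $t$ interleaved colors is a substantial piece of machinery in its own right, and you give no account of how the absorption step (which in the single-color setting uses the reservoir through one fixed graph) survives when the reserved vertices must serve edges of prescribed, adversarially chosen colors while staying outside the relevant $B_r$'s. Second, your remark that a tree vertex $u$ meets at most $\deg_{\mathcal{T}}(u)\leq t\Delta$ colors (in fact at most $\Delta$ per color, but possibly many colors) only bounds the forbidden set \emph{for that one vertex}; the conjecture's error term must be independent of $t$ globally, which is a statement about the whole embedding, not about any single step. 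So the proposal is a reasonable research program aligned with the directions the concluding section already suggests, but it is not a proof: the key lemma is assumed, and the step that would distinguish it from the existing (and provably $t$-lossy) arguments is missing.
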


The second idea is to use the structure of distances in $\F_q^d$ to prove \cref{conj:optimal distance trees} instead of relying on \cref{thm:coloredTreesInInducedSubgraphs}.
The example given above showing that \cref{thm:subgraphsWithLargeMinDegreeExist} cannot be improved is very different than distance graphs over $\F_q^d$, which are related to each other by dilations.
We conjecture that it is possible to improve \cref{thm:subgraphsWithLargeMinDegreeExist} in the special case when $G_1,\ldots,G_t$ are the single-distance graphs over $\F_q^d$.

\begin{conjecture}\label{conj:highDegreeInducedDistanceGraphs}
    Let $C > 50$, and let $S \subseteq \F_q^d$ with $|S|=Cq^{(d+1)/2}$.
    Then, there is a subset $W \subseteq S$ with $|W| \geq (1-100C^{-2})|S|$ such that, for each $x \in W$ and $r \in \F_q^*$, there are $\#\{y \in W: \|x - y\|=r\} \geq 10^{-1}q^{-1}|S|$ points of $W$ at distance $r$ from $x$.
\end{conjecture}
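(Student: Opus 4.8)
The plan is to run the same iterative low-degree-vertex removal as in the proof of \cref{thm:subgraphsWithLargeMinDegreeExist}, but to replace the union bound over the $q-1$ distances — which is exactly what costs a factor of $t=q-1$ — by a single global second-moment estimate. Write $n=q^d$, let $D=q^{-1}n+O(q^{(d-1)/2})$ be the common degree of the graphs $G_r^d$, and set the target degree $\theta=10^{-1}q^{-1}|S|$. For $x\in S$ and $r\in\F_q^*$ let $d_r(x)=\#\{y\in S:\|x-y\|=r\}$, and call $(x,r)$ \emph{deficient} if $d_r(x)<\theta$. The removal process is: while some vertex of the current set is deficient for some $r$, delete it. Charging each deletion to the at-least-$\theta$ deficient incidences it destroys, and using the expander mixing lemma exactly as in \cref{thm:subgraphsWithLargeMinDegreeExist} to control how many new deficiencies are created, the whole task reduces to bounding the number of deficient pairs $(x,r)$ in $S$ itself.

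For that I would use a variance computation. Since $\sum_{r\in\F_q}d_r(x)=|S|$ for every $x$, we have $\sum_{x\in S}\sum_{r\in\F_q}\bigl(d_r(x)-q^{-1}|S|\bigr)^2=\sum_{x\in S}\sum_{r\in\F_q}d_r(x)^2-q^{-1}|S|^3$, and $\sum_{x}\sum_{r}d_r(x)^2=\#\{(x,y,z)\in S^3:\|x-y\|=\|x-z\|\}$ is the number of (possibly degenerate) isosceles configurations in $S$. Expanding $\|x-y\|=\|x\|-2\langle x,y\rangle+\|y\|$, writing this count via additive characters, and using that the character transform of $z\mapsto\chi(t\|z\|)$ has modulus $q^{d/2}$ for $t\ne0$ together with Parseval for $\mathbf{1}_S$, one gets $\#\{(x,y,z)\in S^3:\|x-y\|=\|x-z\|\}\le q^{-1}|S|^3+q^{d}|S|$; the same estimate also follows from \cref{thm:generalPointSphereBound} applied to $S$ against the family of all spheres centred at points of $S$, or from \cref{thm:distancesBetweenTwoSets}. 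Hence $\sum_{x\in S}\sum_{r\in\F_q^*}\bigl(d_r(x)-q^{-1}|S|\bigr)^2\le q^{d}|S|$. Each deficient pair contributes at least $\bigl(\tfrac9{10}q^{-1}|S|\bigr)^2$ to the left side, so the number of deficient pairs is $O\bigl(q^{d+2}|S|^{-1}\bigr)$. Feeding this into the removal process yields a subset $W\subseteq S$ with $\delta(G_r^d[W])\ge\theta$ for every $r\in\F_q^*$ after discarding $O(q^{d+2}|S|^{-1})=O(C^{-2}q\,|S|)$ vertices.

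The difficulty, and the reason this is still a conjecture, is the extra factor of $q$ in $O(C^{-2}q\,|S|)$ compared with the desired $100C^{-2}|S|$. This loss is intrinsic to the argument above, because the bound on the number of deficient \emph{pairs} is essentially sharp: a single distance really can have a deficient set of size $\Omega(C^{-2}|S|)$ (the constructions of \cref{sec:constructions} give exactly this), so to win one must instead show that the deficient pairs are concentrated in few vertices $x$ — equivalently, that $S$ cannot produce $\Omega(q)$ nearly disjoint single-distance deficient sets. This is where the special structure must be exploited. The spheres $\Sigma_r=\{z:\|z\|=r\}$ about a fixed point are dilates of one another, $\Sigma_{\lambda^2 r}=\lambda\Sigma_r$, and over all $r\ne0$ they cover $\F_q^d$ off the isotropic cone through that point. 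The plan would be to use this either as a rigidity statement — if $x$ is deficient for $r_1$ and $r_2$, the near-missing spheres $x+\Sigma_{r_1}$ and $x+\Sigma_{r_2}$ impose algebraic constraints forcing $S$ near $x$ to be exceptional, so the set of deficient pairs supported on any single $x$ is either small or confined to a controlled set of $x$ — or by proving the colorful strengthening of \cref{thm:coloredTreesInInducedSubgraphs} conjectured at the end of \cref{sec:discussConjecture} directly for the family $\{G_r^d\}$, using that its members are related by the linear maps $x\mapsto\lambda x$. Obtaining a usable quantitative form of this concentration is the main obstacle; without it, the method above only recovers \cref{thm:distance stars} up to constant factors.
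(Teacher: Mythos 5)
This statement is \cref{conj:highDegreeInducedDistanceGraphs}, which the paper leaves open: it is stated as a conjecture precisely because the authors' own tools (\cref{thm:subgraphsWithLargeMinDegreeExist} applied with $t=q-1$, or equivalently \cref{thm:distance stars}) lose a factor of $q$ in the number of removed vertices. Your proposal does not close this gap, and you say so yourself. The quantitative core of your argument is the second-moment bound: the variance estimate $\sum_{x\in S}\sum_{r}\bigl(d_r(x)-q^{-1}|S|\bigr)^2\le q^d|S|$ gives at most $O(q^{d+2}|S|^{-1})$ deficient pairs $(x,r)$, which with $|S|=Cq^{(d+1)/2}$ is $O(C^{-2}q\,|S|)$. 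Since the conjecture concerns $C$ of constant size (any $C>50$), this bound exceeds $|S|$ itself, so the removal process you describe is not guaranteed to terminate with anything left; the argument only becomes meaningful when $C=\Omega(q^{1/2})$, i.e.\ $|S|=\Omega(q^{(d+2)/2})$, which is exactly the regime already covered by \cref{thm:distance stars} and \cref{thm:subgraphsWithLargeMinDegreeExist}. In other words, replacing the pigeonhole over colors by a global variance count does not beat the union bound here, because the variance bound is saturated when the deficient pairs are spread over $\Omega(q)$ distances with $\Omega(C^{-2}|S|)$ deficient vertices each --- and ruling out that scenario is the entire content of the conjecture.

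The step that is supposed to do this --- showing that deficient pairs must concentrate on few vertices $x$, via the dilation structure $\Sigma_{\lambda^2 r}=\lambda\Sigma_r$ or via ``algebraic rigidity'' of near-missing spheres --- is stated only as a plan, with no lemma, no quantitative statement, and no indication of how the constraint from being deficient at $r_1$ interacts with being deficient at $r_2$. (Note also that dilation relates spheres about the \emph{same} center, whereas deficiency of $x$ involves the intersection of $S$ with spheres centred at $x$; it is not clear what constraint two deficient radii at the same $x$ actually impose on $S$.) A secondary, repairable issue: your reduction ``the whole task reduces to bounding the number of deficient pairs in $S$ itself'' needs an argument, since deleting vertices creates new deficiencies measured against the shrinking set, and the charging scheme you sketch must account for this (the paper's \cref{thm:subgraphsWithLargeMinDegreeExist} handles the analogous point with a separate expander-mixing contradiction). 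But even granting that reduction, the main missing idea is the concentration statement, so the proposal is an honest reduction of the problem rather than a proof; the conjecture remains open both in the paper and in your write-up.
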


Proving \cref{conj:highDegreeInducedDistanceGraphs} would not immediately resolve \cref{conj:optimal distance trees}, but would provide some additional understanding of what separates finite field distance graphs from arbitrary families of expanders.

A related idea for using the specific structure of distance graphs in $\F_q^d$ is to use point-sphere incidence bounds, as in the proof of \cref{thm:specialPaths}.
\cref{thm:generalPointSphereBound} is tight in general, but has been improved in some special cases \cite{koh2022finite, koh2022point}.
For example, Koh, Lee, and Pham proved the following: 
\begin{theorem}[Theorem 1.6, \cite{koh2022finite}]\label{thm:kohLeePhamIncidence}
    Let $d \equiv 2 \mod 4$ and $q \equiv 3 \mod 4$.
    If $X \subseteq \F_q^d$ is a set of points and $Y$ is a set of spheres with $|Y| \leq q^{d/2}$, then
    \[\left |I(X,Y) - q^{-1}|X|\,|Y| \right |  = O\left (q^{(d-1)/2}|X|^{1/2}|Y|^{1/2} \right ). \]
\end{theorem}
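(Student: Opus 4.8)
The natural route is the Fourier-analytic method for finite-field incidences, exploiting that $d$ is even so that sphere counts and the relevant Gauss sums are clean, and using $|Y|\le q^{d/2}$ to kill the "off-diagonal'' contribution. Fix a nontrivial additive character $\psi$ of $\F_q$, and for $A\subseteq\F_q^d$ write $\widehat{\mathbf1_A}(\xi)=\sum_{a\in A}\psi(-a\cdot\xi)$, so Parseval gives $\sum_{\xi\in\F_q^d}|\widehat{\mathbf1_A}(\xi)|^2=q^d|A|$. Viewing $Y$ as a set of (center, radius) pairs and writing $S_\rho=\{z:\|z\|=\rho\}$, we have $I(X,Y)=\sum_{x\in X}\sum_{(y,r)\in Y}\mathbf1_{S_r}(x-y)$, and Fourier inversion of $\mathbf1_{S_r}$ gives
\[
I(X,Y)=\frac1{q^d}\sum_{\xi\in\F_q^d}\overline{\widehat{\mathbf1_X}(\xi)}\,M(\xi),\qquad M(\xi):=\sum_{(y,r)\in Y}\widehat{\mathbf1_{S_r}}(\xi)\,\psi(-y\cdot\xi).
\]
Since $d$ is even, the standard Gauss-sum evaluation gives $|S_\rho|=q^{d-1}+O(q^{d/2})$, so the $\xi=0$ term $q^{-d}|X|\sum_{(y,r)\in Y}|S_r|$ equals $q^{-1}|X|\,|Y|$ up to $O(q^{-d/2}|X|\,|Y|)$, which is $O(q^{(d-1)/2}|X|^{1/2}|Y|^{1/2})$ once we use $|X|\le q^d$ and $|Y|\le q^{d/2}$. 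So it suffices to bound the contribution $E'$ of the terms $\xi\neq0$.

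Next I would apply Cauchy--Schwarz over $\xi$ together with Parseval:
\[
|E'|\le\frac1{q^d}\Bigl(\sum_{\xi\neq0}|\widehat{\mathbf1_X}(\xi)|^2\Bigr)^{1/2}\Bigl(\sum_{\xi\neq0}|M(\xi)|^2\Bigr)^{1/2}\le q^{-d/2}|X|^{1/2}\Bigl(\sum_{\xi\neq0}|M(\xi)|^2\Bigr)^{1/2},
\]
so the theorem reduces to showing $\sum_{\xi\neq0}|M(\xi)|^2=O(q^{2d-1}|Y|)$ when $|Y|\le q^{d/2}$. Expanding $|M(\xi)|^2$ and using the Plancherel identity $\sum_{\xi}\widehat{\mathbf1_{S_r}}(\xi)\,\overline{\widehat{\mathbf1_{S_{r'}}}(\xi)}\,\psi\bigl((y'-y)\cdot\xi\bigr)=q^d\,N\bigl((y,r),(y',r')\bigr)$, where $N\bigl((y,r),(y',r')\bigr):=\#\{x:\|x-y\|=r,\ \|x-y'\|=r'\}$ is the number of common points of the two spheres, one gets the exact identity
\[
\sum_{\xi\neq0}|M(\xi)|^2=q^d\sum_{x\in\F_q^d}I_x^2-\Bigl(\sum_{x\in\F_q^d}I_x\Bigr)^2,\qquad I_x:=\#\{(y,r)\in Y:\|x-y\|=r\},
\]
reducing everything to a second-moment estimate for the number of spheres of $Y$ through a point.

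Finally I would control this second moment geometrically. Here $\sum_x I_x=\sum_{(y,r)\in Y}|S_r|=q^{d-1}|Y|+O(q^{d/2}|Y|)$, while $\sum_x I_x^2=\sum_{(y,r),(y',r')\in Y}N$ splits into the diagonal (contributing $\sum_x I_x$), the pairs with $y=y'$ but $r\neq r'$ (contributing $0$, since concentric spheres of distinct radius are disjoint), and the pairs with $y\neq y'$. For the last, the two distinct spheres meet in the intersection of $S_r(y)$ with their radical hyperplane, and a standard computation with quadratic forms over $\F_q$ — splitting into the cases where $y'-y$ is anisotropic (a nondegenerate quadric in $d-1$ variables) or isotropic (a rank-$(d-2)$ restriction, the radical hyperplane being tangent to the light cone) — shows $N=q^{d-2}+O(q^{(d-2)/2})$ uniformly. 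Multiplying by $q^d$, summing, and subtracting $(\sum_x I_x)^2$, the $q^{2d-2}|Y|^2$ main terms cancel, leaving $q^d\sum_x I_x^2-(\sum_x I_x)^2=O\bigl(q^{2d-1}|Y|+q^{(3d-2)/2}|Y|^2\bigr)$; and since $q^{(3d-2)/2}|Y|^2\le q^{(3d-2)/2}\cdot q^{d/2}\cdot|Y|=q^{2d-1}|Y|$ under $|Y|\le q^{d/2}$, this is $O(q^{2d-1}|Y|)$, which closes the argument. The congruence conditions $d\equiv2\bmod4$ and $q\equiv3\bmod4$ serve to fix the precise lower-order forms of $|S_\rho|$ and $N$ (equivalently, the values of the relevant quadratic Gauss sums), the structurally essential feature being that $d$ is even. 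The main obstacle is exactly the uniform bound $N\bigl((y,r),(y',r')\bigr)=q^{d-2}+O(q^{(d-2)/2})$ for all pairs of distinct spheres, including the degenerate isotropic case; this bound, together with the trade-off $q^{(3d-2)/2}|Y|^2\le q^{2d-1}|Y|$, is precisely the source of the hypothesis $|Y|\le q^{d/2}$ in the statement.
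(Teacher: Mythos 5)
This statement is not proved in the paper at all: it is quoted from Koh--Lee--Pham, whose argument goes through a finite-field cone restriction estimate, so your Fourier/second-moment route is independent of anything in the paper. Most of your skeleton is sound: the identity $\sum_{\xi\neq 0}|M(\xi)|^2=q^d\sum_z I_z^2-(\sum_z I_z)^2$, the Cauchy--Schwarz reduction to $\sum_{\xi\neq0}|M(\xi)|^2=O(q^{2d-1}|Y|)$, and the final bookkeeping (the $q^{2d-2}|Y|^2$ main terms cancel with the favorable sign because the number of distinct-center ordered pairs is at most $|Y|^2$, and the trade-off $q^{(3d-2)/2}|Y|^2\le q^{2d-1}|Y|$ uses $|Y|\le q^{d/2}$ exactly as you say) are all correct. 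The genuinely even-dimensional feature you are implicitly using is also real: for $d$ even the ``tangent'' sphere--hyperplane intersections (anisotropic normal, resulting radius $0$) have exactly $q^{d-2}$ points, which is what breaks down in odd dimensions.

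The gap is your uniform claim $N\bigl((y,r),(y',r')\bigr)=q^{d-2}+O(q^{(d-2)/2})$, together with your closing assertion that the congruence conditions only normalize lower-order constants. With spheres as defined in this paper (radius allowed to be $0$), the claim is false in the degenerate case $r=r'=0$ with $y'-y$ a nonzero isotropic vector: there the intersection is $q\cdot\#\{v\in V':Q(v)=0\}$ for a nondegenerate form $Q$ on a space $V'$ of even dimension $d-2$, which equals $q^{d-2}\pm(q-1)q^{(d-2)/2}=q^{d-2}+O(q^{d/2})$, a factor of $q$ worse than you claim. This is not a cosmetic issue, because the theorem itself fails in the excluded congruence classes: if $d\equiv 0 \bmod 4$, or $d\equiv 2\bmod 4$ with $q\equiv 1\bmod 4$, take a totally isotropic subspace $L$ of dimension $d/2$, let $X=L$ and let $Y$ be the zero-radius spheres centered at the points of $L$; then $|Y|=q^{d/2}$, $I(X,Y)=|X|\,|Y|=q^{d}$, and the error is of order $q^{d}$, far exceeding $q^{(d-1)/2}|X|^{1/2}|Y|^{1/2}=q^{d-1/2}$. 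Since your argument nowhere uses $d\equiv 2\bmod 4$ or $q\equiv 3\bmod 4$, it would ``prove'' this false statement, and the error is located exactly at the uniform $N$ bound. Under the stated hypotheses the quadratic form is of minus type, the deviation in the bad case is negative ($N=q^{d-2}-(q-1)q^{(d-2)/2}$), and an upper bound on the variance is all you need, so a corrected proof along your lines does appear to go through --- but you must isolate this degenerate case, check the sign via the congruence conditions (this is precisely where they enter), or else restrict explicitly to spheres of nonzero radius, for which your case analysis is complete.
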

Unfortunately, the constraint on $|Y|$ in \cref{thm:kohLeePhamIncidence} is too stringent for our application.
However, Koh, Lee, and Pham suggested that it may be possible to relax the condition to $|Y| \leq q^{(d+2)/2}$.
By the proof of \cref{thm:specialPaths}, such an improvement (with suitable constants) would immediately imply that, for $d \equiv 2 \mod 4$ and $q \equiv 3 \mod 4$, every set $S \subseteq \F_q^d$ of at least $|S| = 100q^{(d+1)/2}$ points contains every distance path on at least $2^{-1}|S|$ vertices.

\bibliographystyle{plain}
\bibliography{distanceTrees}

\end{document}